\newtheorem{definition}{Definition}[section]
\newtheorem{claim}{Claim}
\newtheorem{theorem}[definition]{Theorem}
\newtheorem{lemma}[definition]{Lemma}
\newtheorem{conjecture}[definition]{Conjecture}
\newtheorem{remark}[definition]{Remark}
\numberwithin{equation}{section}
\newcommand{\comment}[1]{}
\newcommand{\N}{\mathbb N}
\newcommand{\cH}{\mathcal{H}}
\newcommand{\bI}{\mathbf{I}}
\newcommand{\dd}{\,\mathbf{d}}
\newcommand{\COMMENT}[1]{}
\renewcommand{\COMMENT}{\footnote} 
\newcounter{step}
\newcommand{\C}[1]{\mathcal{#1}}
\newcommand{\e}{\varepsilon}
\newcommand{\hide}[1]{}
\renewcommand{\mid}{:}
\newcommand{\I}[1]{\mathbb{#1}}
\renewcommand{\ge}{\geqslant}
\renewcommand{\le}{\leqslant}
\newcommand{\random}[2]{\I G(#1,#2)}
\begin{document}

\begin{frontmatter}[classification=text]

\title{Asymptotic Structure for the Clique Density Theorem} 

\author[Jaehoon]{Jaehoon Kim\thanks{Supported by the Leverhulme Trust Early Career Fellowship~ECF-2018-538, by the POSCO Science
Fellowship of POSCO TJ Park Foundation, and by the KAIX Challenge program of KAIST Advanced Institute
for Science-X. }}
\author[Hong]{Hong Liu\thanks{Supported by the UK Research and Innovation Future Leaders Fellowship MR/S016325/1 and the Leverhulme Trust Early Career Fellowship~ECF-2016-523.} }
\author[Oleg]{Oleg Pikhurko\thanks{Supported by the Leverhulme Research Project Grant RPG-2018-424.}}
\author[Maryam]{Maryam Sharifzadeh\thanks{Supported by the European Unions Horizon 2020 research and innovation programme under the Marie Curie Individual Fellowship agreement No 752426. } }
\begin{abstract}
The famous Erd\H os-Rademacher problem asks for the smallest number of $r$-cliques in a graph with the given number of vertices and edges. Despite decades of active attempts, the asymptotic value of this extremal function for all $r$ was determined only recently, by Reiher [\emph{Annals of Mathematics}, \textbf{184} (2016) 683--707]. Here we describe the asymptotic structure of all almost extremal graphs. This task for $r=3$ was previously accomplished by Pikhurko and Razborov [\emph{Combinatorics, Probability and Computing}, \textbf{26} (2017) 138--160].
\end{abstract}
\end{frontmatter}

\section{Introduction}
Let $K_r$ denote the complete graph on $r$ vertices and let the \emph{Tur\'an graph} $T_{r}(n)$ be the complete $r$-partite graph with $n$ vertices and balanced part sizes (that is, every two parts differ in size by at most 1). 

It is fair to say that extremal graph theory was born with the following fundamental theorem of Tur\'an~\cite{Turan41}: among all graphs on $n$ vertices without $K_r$,
the Tur\'an graph $T_{r-1}(n)$ is the unique (up to isomorphism) graph with the maximum number of edges. The case $r=3$ of this theorem was proved earlier by Mantel~\cite{Mantel07}. 

Rademacher (unpublished, see e.g.~\cite{Erdos55,Erdos62}) showed that, for even $n$, any $n$-vertex graph with $t_2(n)+1$ edges contains at least $n/2$ triangles, where $t_r(n):=e(T_r(n))$ is the number of edges in the $r$-partite Tur\'an graph $T_r(n)$. In 1955, 
Erd\H os~\cite{Erdos55} asked  the more general question to determine \emph{$G_r(n,m)$}, the minimum number of copies of $K_r$ in an \emph{$(n,m)$-graph}, that is, a graph with $n$ vertices and $m$ edges.
This question is now known as the \emph{Erd\H os-Rademacher problem}. Early papers on this problem (\cite{Erdos55,Erdos62,MoonMoser62,Nikiforov76,NordhausStewart63}, etc) dealt mainly with the case when $m$ is slightly larger than $t_{r-1}(n)$, the threshold when copies of $K_r$ start to appear. But even this special case turned out to be quite difficult. For example, the conjecture of Erd\H os~\cite{Erdos55} that $G_3(n,t_2(n)+q)\ge q\,\lfloor n/2\rfloor$ for $q< \lfloor n/2\rfloor$ when $n$ is large was proved only two decades later by Lov\'asz and Simonovits~\cite{LovaszSimonovits76}, with a proof of the conjecture also announced by Nikiforov and Khadzhiivanov~\cite{NikiforovKhadzhiivanov81}.

Lov\'asz and Simonovits~\cite[Conjecture~1]{LovaszSimonovits76} made the following bold conjecture. Let $\C H$ consist of all graphs that can be obtained from a complete partite graph by adding a triangle-free graph into one of the parts. Let $H_r(n,m)$ be the miminum number of $r$-cliques in an $(n,m)$-graph
from $\C H$. Clearly, $G_r(n,m)\le H_r(n,m)$. In this notation, the conjecture of Lov\'asz and Simonovits states that we have equality here, provided $n$ is sufficiently large:

\begin{conjecture}[Lov\'asz and Simonovits~\cite{LovaszSimonovits76}]
	\label{cj:LS76}
	For every integer $r\ge 3$, there is $n_0$ such that for all $n\ge n_0$ and $0\le m\le {n\choose 2}$, we have that
	$G_r(n,m)\ge H_r(n,m)$.
	\end{conjecture}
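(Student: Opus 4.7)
The plan is to bootstrap from the asymptotic structure theorem established in this paper. Reiher's clique density theorem gives $G_r(n,m) = H_r(n,m) + o(n^r)$, so any exactly extremal $(n,m)$-graph $G$ satisfies $K_r(G) \le H_r(n,m)$ and is, in particular, a near-extremal graph to which the asymptotic structure theorem applies. Thus, up to $o(n^2)$ edge modifications, $G$ agrees with some $\tilde G \in \cH$ having parts $V_1, \ldots, V_s$ (complete between the parts) together with a triangle-free graph $T$ placed on, say, $V_1$.

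The first step would be a Zykov-type symmetrization. For a vertex $v$ whose adjacency pattern deviates from that of $\tilde G$, the number of $K_r$'s through $v$ is an affine function of the incidence vector of $N(v)$ with the other parts (the rest of the graph being held fixed). Hence one should be able to perform a local swap -- replacing $v$'s neighborhood by the ``correct'' one from $\tilde G$ -- that does not increase $K_r(G)$ while strictly decreasing the edit-distance to $\cH$. These swaps must preserve $|E(G)| = m$, which requires pairing additions with deletions and checking that compatible pairs are always available whenever $G \notin \cH$.

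The second step would handle the triangle-free graph $T$ on $V_1$. Once the rest of $G$ is forced to be complete multipartite with parts $V_1, \ldots, V_s$, the edges inside $V_1$ must form a triangle-free graph that (i) has the required total edge count and (ii) minimizes $K_r(G)$; condition (ii) reduces to a linear function of $e(T)$ multiplied by a fixed $K_{r-2}$-count in the remainder. Optimizing over the number of parts $s$ and the sizes $|V_i|$ subject to $|E(G)| = m$ then recovers $H_r(n,m)$ exactly.

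The main obstacle is Step~1. The $o(n^2)$ closeness provided by the asymptotic structure theorem is too weak to guarantee that individual swaps never increase $K_r$: near the transition points of $m$ at which the optimal number of parts changes, the stability radius degenerates and several structurally distinct candidates $\tilde G \in \cH$ may all be near-minimizers. Controlling the symmetrization so that it terminates in a graph lying in $\cH$ with the correct edge count, \emph{uniformly over all admissible $m$}, is the heart of the difficulty and explains why, despite Reiher's theorem and the asymptotic stability developed in this paper, Conjecture~\ref{cj:LS76} remains a substantial open problem.
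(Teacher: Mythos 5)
This statement is Conjecture~\ref{cj:LS76} of the paper, and the paper does \emph{not} prove it --- it is explicitly noted to remain open. There is therefore no ``paper's own proof'' to compare against. Your write-up correctly recognizes this: rather than claiming a proof, you sketch the natural stability/symmetrization route and then accurately identify where it stalls. That assessment matches the paper's own framing: Theorem~\ref{th:graph} (the asymptotic structure result) is exactly the stability ingredient one would feed into a Simonovits-style exact argument, and the authors point out this approach has succeeded in restricted ranges --- Lov\'asz--Simonovits~\cite{LovaszSimonovits83} for $m$ just above $t_r(n)$, and Liu--Pikhurko--Staden~\cite{LiuPikhurkoStaden17arxiv} for $r=3$ with density bounded away from $1$ --- while the general conjecture is still open. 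The obstruction you highlight (the degeneration of the stability radius near the cusp densities $1-\tfrac{1}{k}$ where the optimal number of parts changes, and the need to run the symmetrization uniformly in $m$ while conserving the edge count) is indeed the recognized difficulty, so your account is essentially correct as a description of the state of the problem rather than a proof.
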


Of course, minimising the number of $r$-cliques over $(n,m)$-graphs from the restricted class $\C H$ is easier than the unrestricted version. The computation of $H_3(n,m)$ for all $(n,m)$ appears in~\cite[Proposition~1.5]{LiuPikhurkoStaden17arxiv}.  Some large ranges of parameters when the conjecture has been proved are when $m$ is slightly above $t_r(n)$ by Lov\'asz and Simonovits~\cite{LovaszSimonovits83} and when $r=3$ and $m/{n\choose 2}$ is bounded away from 1 by Liu, Pikhurko and Staden~\cite{LiuPikhurkoStaden17arxiv}. Still, Conjecture~\ref{cj:LS76} remains open.

Let us turn to the asymptotic version. Namely, given $\alpha \in [0,1]$ take any integer-valued function $0\le e(n)\le {n\choose 2}$ with $e(n)/{n\choose 2}\to \alpha$ as $n\to\infty$ and define
 \begin{eqnarray*}
g_r(\alpha) &:=& \lim_{n \rightarrow \infty}\frac{G_r(n,e(n))}{\binom{n}{r}},\\
h_r(\alpha) &:=& \lim_{n \rightarrow \infty}\frac{H_r(n,e(n))}{\binom{n}{r}}.
 \end{eqnarray*}
 It is not hard to see from basic principles (see e.g.~\cite[Lemma~2.2]{PikhurkoSliacanTyros19}) that the limits exist and do not depend on the choice of the function~$e(n)$. Thus, the determination of $g_r(\alpha)$ amounts to estimating the Erd\H os-Rademacher function within additive error $o(n^r)$. Clearly, $h_r(\alpha)$ is an upper bound on~$g_r(\alpha)$. 
 
For each $\alpha\in [0,1]$, it is not hard to find some sequence of graphs $(H_{\alpha,n})_{n\in\mathbb N}$, that give the value of $h_r(\alpha)$. If $\alpha=1$, then we can let 
$H_{1,n}:=K_n$ be the complete graph. Suppose that $0\le \alpha<1$. Let integer $k \ge1$ satisfy
$\alpha\in [1-\frac{1}{k},1-\frac1{k+1})$. Then fix the (unique) $c> \frac1{k+1}$ so that the complete $(k+1)$-partite graph $H_{\alpha,n}:=K(V_1,\dots,V_{k+1})$ with parts $V_1,\dots,V_{k+1}$, where $|V_1|=\dots=|V_{k}|=\lfloor cn\rfloor$,
has edge density $(\alpha+o(1)){n\choose 2}$ as $n\to\infty$. It is easy to show, see~\eqref{eq:c},
that  $c\le \frac1{k}$. Thus $|V_{k+1}|=n- k\lfloor{cn}\rfloor\ge 0$. (In fact, our choice to round
$cn$ down was rather arbitrary, just to make the graph $H_{\alpha,n}$ well-defined for each~$\alpha$ and $n$.)%
\hide{More formally, we let $H_{0,n}:=\overline{K}_n$ be the empty graph, $H_{1,n}:=K_n$ be the complete graph, while for $\alpha\in (0,1)$, we proceed as follows. Let the integer $k\ge 1$ satisfy
\begin{equation}
\alpha \in \left(1-\frac1{k+1},1-\frac1{k+2}\right]
\end{equation}
and let 
\begin{equation}\label{eq:CExplicit}
c=\frac{k+1+\sqrt{(k+1)((k+1)-a(k+2))}}{(k+1)(k+2)}
\end{equation}
be the (unique) root of the quadratic equation
\begin{equation}\label{eq:c}
2\left({k+1\choose 2}c^2+(k+1)c(1-(k+1)c)\right)=\alpha
\end{equation}
with $c\ge \frac1{k+2}$.
\footnote{Note that
	the left-hand side of~\eqref{eq:c} is strictly decreasing when
	$c\ge \frac1{t+1}$ and assumes value $1-\frac1{t+1}\ge a$ when
	$c=\frac1{t+1}$ and value $1-\frac1t\le a$ when $c=\frac1t$. (So we have
	in fact $c\le
	\frac1t$.)}
Since $\alpha> 1-\frac1{k+1}$, it follows from~\eqref{eq:CExplicit} (or from \eqref{eq:c}) that $c<
\frac1{k+1}$, so $H_\alpha$ is well-defined.
Partition the vertex set $[n]=\{1,\dots,n\}$ into $k+2$ non-empty parts
$V_1,\dots,V_{k+2}$ with
$|V_1|=\dots=|V_{k+1}|=\lfloor{cn}\rfloor$. Let $G$ be obtained from the
complete $t$-partite graph $K(V_1,\dots,V_{k},U)$, where $U=V_{k+1}\cup V_{k+2}$,
by adding an arbitrary triangle-free graph $G[U]$ on $U$ with $|V_{k+1}|\,|V_{k+2}|$
edges. One possible choice is to take $G[U]=K(V_{k+1},V_{k+2})$, resulting in
	$G=K(V_1,\dots,V_{k+2})$. Since each edge of $G[U]$ belongs to exactly
	$|V_1|+\dots+|V_{t-1}|$ triangles, the choice of $G[U]$ has no
	effect on the triangle density. Clearly, the edge density of $G$ is
$\alpha+o(1)$.
}
It is routine to show (see e.g.~\cite[Theorem~1.3]{Nikiforov11} for a derivation) that these ratios give the value of $h_r(\alpha)$, that is,
 \begin{equation}\label{eq:hr}
h_r(\alpha)=r!\left({k\choose r} c^r + {k\choose r-1}c^{r-1}(1- kc)\right).
\end{equation}
 The function $h_r$ stays 0 when $\alpha\le 1-\frac1{r-1}$ (when $k\le r-2$). Also,
 as Lemma~\ref{eq:  h'} implies, $h_r$ consists of countably many concave ``scallops'' with cusps at $\alpha=1-\frac1m$ for integer $m\ge r-1$.

For a while, the best known lower bound on the limit function $h_r $, by Bollob\'as~\cite{Bollobas76}, was the piecewise linear function which coincides with $h_r$ on the cusp points. 
Fisher~\cite{Fisher89} showed that $g_3(\lambda)=h_3(\lambda)$ for all $1/2\le \lambda\le 2/3$, that is, he determined $g_3 $ in the first scallop.
Razborov used his newly developed theory of flag algebras first to give a different proof of Fisher's result in~\cite{Razborov07} and then to determine the whole function $g_3 $ in~\cite{Razborov08}. The function $g_4 $ was determined by Nikiforov~\cite{Nikiforov11} and the function $g_r $ for any $r\ge 5$ was determined by Reiher~\cite{Reiher16} (with these two papers also giving new proofs for the previously solved cases of~$r$).

\subsection{Main result}
We are interested in the asymptotic structure of (almost) extremal graphs for the Erd\H os-Rademacher $K_r$-minimisation problem, that is, a description up to $o(n^2)$ edges of every $(n,m)$-graph with $G_r(n,m)+o(n^r)$ copies of $K_r$ as $n\to\infty$. Of course, such a result tells us more about the problem than just the value of $g_r $. Asymptotic structure results are often very useful for proving enumerative and probabilistic versions of the corresponding extremal problem. For example, the more general problem of understanding the structure of graphons with any given edge and $r$-clique densities appears in the study of  exponential random graphs (see
Chatterjee and Diaconis~\cite{ChatterjeeDiaconis13} and its follow-up papers), phases in large graphs (see the survey by Radin~\cite{Radin18aihpd}),
and large deviation inequalities for the clique density
(see the survey by Chatterjee~\cite{Chatterjee16bams}).
Last but not least, asymptotic structure results often greatly help, as a first step, in obtaining the exact structure of extremal graphs via the so-called \emph{stability approach} pioneered by Simonovits~\cite{Simonovits68}. Here, knowing extremal $n$-vertex graphs within $o(n^2)$ edges greatly helps in the ultimate aim of ruling out even a single ``wrong'' adjacency.
In fact, almost all  cases when the Erd\H os-Rademacher problem was solved exactly were established via the stability approach.

In order to state the main result of this paper, we have to define further graph families. 
For $\alpha=1$, we let $\C H_{1,n} :=\{K_n\}$. 
For $\alpha\in [0,1)$, by using the notation defined before~\eqref{eq:hr}, let $\C H_{\alpha,n}$ consist of all graphs that are obtained from the complete $k$-partite graph on parts $V_1,\dots,V_{k-1}$ and $U:=V_{k}\cup V_{k+1}$ by adding a triangle-free graph on $U$ with $|V_{k}|\cdot|V_{k+1}|$ edges. 
(In particular, $\C H_{0,n} :=\{\,\overline{K_n}\,\}$ consists of the empty graph only.) Clearly, for any $r\ge 3$, the number of $r$-cliques in the obtained graph does not depend on the choice of the graph added on $U$. Also, $\C H_{\alpha,n}\ni H_{\alpha,n}$ is always non-empty (but typically has many non-isomorphic graphs). Finally, for $r\ge 3$, let $\C H_{r,n}$ be the union of $\C H_{\alpha,n}$ over all $\alpha\in [0,1]$ together with the family of all $K_r$-free $n$-vertex graphs.

Pikhurko and Razborov~\cite{PikhurkoRazborov17} proved that every almost extremal $(n,m)$-graph $G$ is $o(n^2)$ close in edit distance to some graph in $\C H_{3,n}$. Our main result is to extend this structural result to all $r\ge 4$:

\begin{theorem}\label{th:graph} For every real $\e>0$ and integer $r\ge 4$, there are $\delta>0$ and $n_0$ such that every graph $G$ with $n\ge n_0$ vertices and at most $(g_r(\alpha)+\delta){n\choose r}$ $r$-cliques, where $\alpha:=e(G)/{n\choose 2}$, can be made isomorphic to some graph in $\C H_{r,n}$ by changing at most $\e n^2$ adjacencies. 
\end{theorem}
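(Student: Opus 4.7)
The plan is to pass to graph limits and reduce the problem to a structural classification of \emph{extremal graphons}, which is then attacked by induction on $r$.

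\textbf{Compactness reduction.} Suppose for contradiction that the theorem fails for some $\varepsilon > 0$. Then there is a sequence of graphs $(G_{n_j})$ with $n_j \to \infty$, edge densities $\alpha_{n_j}$ converging (after a subsequence) to some $\alpha \in [0,1]$, $K_r$-densities at most $g_r(\alpha_{n_j}) + o(1)$, and each $G_{n_j}$ at edit distance more than $\varepsilon n_j^2$ from $\mathcal{H}_{r,n_j}$. By compactness of the space of graphons in the cut metric, after a further subsequence we may assume $G_{n_j} \to W$ for some graphon $W$ with $\int W = \alpha$. Since the $r$-clique density is continuous in the cut metric, $t(K_r,W) = g_r(\alpha)$, so $W$ is an extremal graphon for the clique density problem.

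\textbf{Structural classification of extremal graphons.} The core task is to show: if $W$ is a graphon of edge density $\alpha$ with $t(K_r,W) = g_r(\alpha)$, then either $W$ is $K_r$-free a.e., or (after a measure-preserving rearrangement) there is a partition $[0,1] = V_1 \sqcup \dots \sqcup V_{k-1} \sqcup U$ with $k,c$ determined by $\alpha$ as before \eqref{eq:hr}, $|V_i| = c$ for $i < k$, $W \equiv 1$ a.e.\ between distinct parts, $W \equiv 0$ a.e.\ inside each $V_i$, and $W|_{U \times U}$ a triangle-free graphon whose edge count equals $2c(|U|-c)$. Granted this, a standard counting argument converts the classification back to graphs: the cut-metric convergence $G_{n_j} \to W$ lets us relabel $V(G_{n_j})$ so that its vertex partition matches $V_1,\dots,V_{k-1},U$ up to $o(n_j)$ misplaced vertices; then completing the between-part graph, emptying each $V_i$, and replacing $G_{n_j}[U]$ by any triangle-free graph with the correct edge count puts $G_{n_j}$ into $\mathcal{H}_{\alpha,n_j}$ at the cost of $o(n_j^2)$ edits, contradicting the assumed $\varepsilon n_j^2$ distance.

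\textbf{Main obstacle: the graphon classification.} I would prove the classification by induction on $r$, with the Pikhurko-Razborov theorem~\cite{PikhurkoRazborov17} as the base case $r=3$. For the inductive step, fix an extremal $W$ and consider its neighbourhood graphons $W_x$, weighted by $W(x,\cdot)$; the $r$-clique count through $x$ equals (up to a factor) the $(r-1)$-clique count of $W_x$. Plugging the $K_{r-1}$ clique density theorem into each slice yields an integral lower bound for $t(K_r,W)$ that matches $g_r(\alpha)$ only when (i) each $W_x$ is $K_{r-1}$-extremal at its own edge density and (ii) a compatibility constraint between neighbouring slices is saturated. By the inductive hypothesis, almost every slice $W_x$ is one of the described extremal structures for $r-1$; the principal difficulty is to \emph{stitch} these slice-wise structures into a single global multipartite-plus-triangle-free structure on $W$, ruling out inconsistencies such as different slices lying in different scallops or placing their ``triangle-free parts'' in measurably incompatible positions. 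Overcoming this will likely require a careful selection argument using Lebesgue density points of the slice map $x \mapsto W_x$, together with a robust (approximate-equality) version of Reiher's inequality chain, since in Step 1 we only inherit $g_r(\alpha) + o(1)$ rather than exact equality.
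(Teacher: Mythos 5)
Your outer structure is correct and matches the paper's: a compactness reduction to a limit graphon $W$ which is $K_r$-extremal, a classification of extremal graphons as (essentially) $\cH_r$, and a transfer back to graphs via cut-distance (the paper uses Borgs et al.\ to convert subgraph-density convergence into $\hat\delta_\Box$-convergence, then the triangle removal lemma and a lemma of Pikhurko--Razborov to fix the triangle-free part exactly). Your identification of the inductive framework on $r$, with Pikhurko--Razborov as the $r=3$ base case and the passage to neighbourhood graphons $N_W(x)$, is also the route the paper takes.

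The genuine gap is in the inductive step. You propose to integrate the $K_{r-1}$ clique density theorem over all slices, deduce that equality forces \emph{every} slice $N_W(x)$ to be $K_{r-1}$-extremal, and then ``stitch'' the slice-wise multipartite-plus-triangle-free structures into a global one. There are two problems. First, the integral $\int d_W(x)^{r-1}\, h_{r-1}(t(K_2,N_W(x)))\,\dd\mu(x)$ depends on the joint distribution of $(d_W(x),\, t_x(K_3,W))$ and does not a priori collapse to $h_r(\alpha)$; showing it does is itself an optimisation over those distributions, essentially equivalent to the problem one is trying to solve. Second — as you yourself flag as the principal difficulty — even granted that all slices are extremal, stitching scallop membership and triangle-free placements across slices is hard and you offer no concrete mechanism for it. The paper avoids both issues entirely. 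Taking $r$ minimal, it first observes that $W\notin\cH_3$ (else one is done by the base case), which gives a \emph{strict} excess $t(K_3,W)>h_3(\alpha)$. It then establishes, by a variational (Lagrange-multiplier style) argument using the differentiability of $h_r$ in the interior of a scallop, that for a.e.\ $x$ the quantity $t_x(K_r,W)$ is an explicit linear function of $d_W(x)$; a second variational claim bounds the degree a.e.\ by $kc$. Combined, these pin down $t(K_{r-1},N_W(x))$ exactly in terms of $d_W(x)$ for a typical $x$, while the $K_3$-excess supplies a positive-measure set of $x$ at which $t(K_2,N_W(x))$ is strictly larger than its ``expected'' value. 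Choosing a single such typical, $K_3$-heavy, low-degree $x$, the slice $N_W(x)$ then has too few $K_{r-1}$'s for its edge density, contradicting Reiher's theorem at level $r-1$. No slice-by-slice extremality and no global stitching are required — the argument only needs to exhibit one offending point.
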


Note that $\C H_{3,n}\subseteq \C H_{r,n}$. Also, all graphs in $\C H_{r,n}\setminus \C H_{3,n}$ are $K_r$-free but may contain triangles; these are ``trivial'' minimisers for the $g_r$-problem that need not be minimisers for the $g_3$-problem. Thus, apart from these graphs,  the $g_3$ and $g_r$ extremal problems have the same set of approximate minimisers and we exploit this in our proof as follows.
In brief, we take any almost $G_r(n,m)$-extremal graph~$G$. Suppose that $G$ has strictly more than $G_3(n,m)+o(n^3)$ triangles for otherwise $G$ is $o(n^2)$-close in edit distance to $\C H_{3,n}$ by the result in~\cite{PikhurkoRazborov17} and we are done since $\C H_{3,n}\subseteq \C H_{r,n}$. If $\alpha:= m/{n\choose 2}$ is $1-\frac1k+o(1)$ for some integer $k\ge r$ (that is, the edge density of $G$ is close to that of some Tur\'an graph $T_k(n)$), then we use the result of Lov\'asz and Simonovits~\cite{LovaszSimonovits83} that $G$ has to be $o(n^2)$-close in edit distance to $T_k(n)$, giving the desired conclusion. Thus we can assume that the edge density is strictly inside one of the scallops. Lemma~\ref{eq:  h'} shows that the function $h_r $ is differentiable for such $\alpha$.
This allows us to derive various properties of $G$ via variational principles. The property that we will need is that, for a typical vertex $x$ of $G$, there is an asymptotic linear relation between the degree of $x$ and the number of $r$-cliques containing $x$. This relation comes from the Lagrange multiplier method. Since we know the extremal function $g_r $, we can determine all Lagrange multipliers and write an explicit relation. Since
the graph $G$ is ``heavy'' on triangles, we can find a typical vertex $x$ that is ``heavy'' in terms of  triangles containing it. When we restrict ourselves to the graph $G'$ induced by the set of neighbours of $x$, then the counts of triangles and $r$-cliques in $G$ containing $x$ correspond to the counts of respectively  edges and $(r-1)$-cliques in $G'$. Some calculations show that $G'$ is too ``heavy'' on $K_2$ when compared to the number of $(r-1)$-cliques, contradicting the asymptotic result for $r-1$ and finishing the proof. Thus, the main results on which our proof of Theorem~\ref{th:graph} for a given $r\ge 4$ crucially relies are the values of $g_r $
and $g_{r-1} $ as well as the asymptotic structure for $r=3$.

We found it more convenient to present our proof in terms of \emph{graphons} that are analytic objects representing subgraph densities in large dense graphs. This reduces the number of parameters in various statements. For example, the statement that some ``natural'' property fails for $o(n)$ vertices corresponds in the limit to the statement that the set of failures has measure 0. Also, the variational principles are easier to state and derive using the graphon language, in particular it is much cleaner to define the limit versions of the families $\C H_{r,n}$, namely the families $\C H_r$ defined in Section~\ref{sec-graphon-extremal-structure}. Some downside of this is that we have to use various non-trivial (but standard) facts of measure theory. 
We rectify this by giving discrete analogues of some analytic constructions and properties that we use. 
Also, we believe that graphons, as a tool in extremal graph theory, are by now standard and widely known.

\medskip

\noindent\textbf{Organisation.} The rest of the paper is organised as follows. We will rephrase our main result in terms of the structure of extremal graphons in the next subsection, Section~\ref{sec-graphon-extremal-structure}. Further properties  of graphons and of the family of extremal graphons are discussed in Sections~\ref{graphons} and~\ref{subsec: hr}. This is preceeded by Section~\ref{MT} that contains some notions and results of measure theory that we will need. In Section~\ref{sec-graphon-to-graph}, we
derive Theorem~\ref{th:graph} from our result on graphons. Then in Section~\ref{sec-main-proof}, we present the proof of the graphon version of our main result.


\subsection{Graphons with minimum clique density}\label{sec-graphon-extremal-structure}
For an introduction to graphons, we refer the reader to the excellent book by Lov\'asz~\cite{Lovasz:lngl}.

For the purposes of this paper,
it is convenient to define a 
 \emph{graphon} as a pair $(W,\mu)$, where $W: [0,1]\times [0,1] \to [0,1]$ is a symmetric Borel function and $\mu$ is a non-atomic probability measure on Borel
subsets of~$[0,1]$. 
By small abuse of notation, we may call just the function $W$ a \emph{graphon} (if the measure $\mu$ is understood).
Each graph $G=(V,E)$ with $V=\{v_1,\ldots,v_n\}$ corresponds naturally to a graphon $(W_G,\mu)$ with $\mu$ being the Lebesgue measure on $[0,1]$ and $W_G$ being the \emph{adjacency function} of $G$ which assumes value 1 on $[\frac{s-1}n,\frac{s}n)\times [\frac{t-1}n,\frac{t}n)$ for each $\{v_s,v_t\}\in E(G)$ and 0 otherwise. \hide{
$W_{G}=(x_1,x_2)$ being $1$ if $x_i\in [\frac{t_i}{n}, \frac{t_i+1}{n})$ for each $i\in [2]$ and $\{v_{t_1},v_{t_{2}}\} \in E(G)$ and 0 otherwise.
}

For a graph $F$ on $[r]$, define its \emph{homomorphism density} in a graphon $W$ by
$$
t(F,W):=\int_{[0,1]^r} \prod_{ij\in E(F)} W(x_i,x_j)\, \prod_{i=1}^{r} \dd \mu(x_i).
$$
 In particular, $t(K_2,W)$ is called \emph{the (edge)-density} of the graphon $W$.  
 If $W=W_G$, then we get the \emph{homomorphism density} $t(F,G):=t(F,W_G)$ of $F$ in $G$, which is the probability 
 that a uniformly at random chosen function $f: V(F) \rightarrow V(G)$ maps every edge of $F$ to an edge of~$G$.

 A sequence of graphons $W_n$ \emph{converges} to $W$ if for every graph $F$ we have 
 $$
 \lim_{n\to \infty} t(F,W_n)=t(F,W).
 $$
 In the special case when $W_n=W_{G_n}$, we get the \emph{convergence of graphs} $G_n$ to $W$. Let us call
two graphons $U$ and $W$ \emph{weakly isomorphic} if $t(F,U)=t(F,W)$ for every graph~$F$. {Theorem~13.10} in \cite{Lovasz:lngl} gives several equivalent definitions of weak isomorphism. Let $[W]$ denote the equivalence class of a graphon~$W$ up to weak isomorphism and let 
$$\mathcal{W}:=\{\,[W]:\mbox{graphon }W\,\}.$$

If we fix some enumeration $\mathcal{F}=\{F_1,F_2,\dots\}$ of all graphs up to isomorphism, then one can identify each $[W]\in\mathcal{W}$ with the sequence $(t(F,W))_{F\in\mathcal{F}}\in [0,1]^{\mathcal{F}}$ and the above convergence is the one corresponding to the product topology on $[0,1]^{\mathcal{F}}$. Since the product of compact sets is compact, the closed subspace $\mathcal{W}\subseteq [0,1]^{\mathcal{F}}$ is compact. As $\mathcal{F}$ is countable,  every infinite sequence of graphons/graphs has a convergence subsequence. Also, for each $F$, 
the function $W\mapsto t(F,W)$ is continuous (as it is just the projection on the $F$-th coordinate). Another key property of graphons is that finite graphs are dense in $\C W$.

Let $B \subseteq [0,1]$ be a Borel subset of $[0,1]$ with $\mu(B) >0$. We
define the \emph{graphon $W[B]$ induced by $B$} to be the graphon $(W,\mu')$ with 
$\mu'(A):=\mu(A\cap B)/\mu(B)$ for Borel $A\subseteq [0,1]$.  
 Since the new measure $\mu'$ is 0 on $[0,1]\setminus B$, this effectively restricts everything to $B$ (while the scaling ensures that $\mu'$ is a probability measure).




\begin{definition}[Graphon family~$\C H_r$] \label{def: HrGk} Let $r\ge 3$.
Define
$\mathcal{F}:= \{\, [W] \in \mathcal{W} : t(K_r,W)=0  \,\}$ and, 
for each integer $k\ge r-1$, let 
$\mathcal{G}_{k}$ be the set of weak isomorphism classes of graphons $(W,\mu)$ satisfying the following:
there exist a real $c\in (\frac{1}{k+1}, \frac{1}{k}]$ and a Borel partition $[0,1] =\Omega_1\cup \dots \cup \Omega_{k}$ with $\mu(\Omega_i)=c$ for each $i\in [k-1]$ such that 
\begin{enumerate}
\item $t( K_2, W[\Omega_{k}] ) = 2c(b-c)/b^2$, where $b:=1-(k-1)c$,
\item $t(K_3,W[\Omega_{k}])=0$,
\item For all  $x_1 \in \Omega_{k_1}$ and $x_2\in \Omega_{k_2}$, we have 
$W(x_1,x_2)=  \left\{ \begin{array}{ll}
0, & \text{ if } k_1=k_2\leq k-1 , \\
1, & \text{ if } k_1,k_2\in [k]\text{ satisfy }k_1\neq k_2.
\end{array}\right.$
\end{enumerate}
Further let
$$\mathcal{H}_{r} := \{\,[\mathbf{1}_{[0,1]^2}]\,\} \cup \mathcal{F} \cup \bigcup_{k=r-1}^{\infty} \mathcal{G}_{k}.$$
\end{definition}

This is a direct analogue of the graph family $\C H_{r,n}$: for example, to constract a graphon in $\C G_k$ we take a ``complete partite'' graphon with parts $\Omega_1,\dots,\Omega_{k}$
and add a triangle-free graphon into the last part.
 In fact, by Lemma~\ref{lm:ConvHr}, $\cH_r$ is precisely the set of possible limits of increasing graph sequences $(G_n)_{n=1}^\infty$ with each $G_n\in \cH_{r,n}$. 
 This (or an easy direct calculation) gives that $t(K_r,W)=h_r(t(K_2,W))$ for each 
$W\in \mathcal{H}_r$, where $h_r$ is defined in~\eqref{eq:hr}.
 Also, note that, by definition, $\cH_3\subsetneq \cH_4\subsetneq 
\ldots\subseteq \mathcal W$.  

The result of Reiher~\cite{Reiher16} that determines  the function $g_r$ can be equivalently rephrased in the language of graphons as follows.

\begin{theorem}[\cite{Reiher16}]\label{thm: clique density}
For each integer $r\geq 3$ and a graphon $W$, it holds that $t(K_r,W) \geq h_r(t(K_2,W))$.
\end{theorem}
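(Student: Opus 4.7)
I would try to prove this by induction on~$r$, taking the case $r=3$ (due to Fisher~\cite{Fisher89} and Razborov~\cite{Razborov07,Razborov08}) as the base. By compactness of the space of graphon equivalence classes (inherited from the product topology on $[0,1]^{\mathcal{F}}$) together with continuity of the functionals $t(K_2,\cdot)$ and $t(K_r,\cdot)$, for every $\alpha\in[0,1]$ the infimum $\inf\{\,t(K_r,W):t(K_2,W)=\alpha\,\}$ is attained by some graphon~$W^{*}$. The aim is to show $W^{*}\in\mathcal{H}_r$, since then Definition~\ref{def: HrGk} together with~\eqref{eq:hr} yields $t(K_r,W^{*})=h_r(\alpha)$.

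For the inductive step I would use a \emph{link decomposition}. For $\mu$-a.e.\ $x$ with positive degree $d(x):=\int W(x,y)\dd\mu(y)$, define the link probability measure $\dd\mu_x(y):=d(x)^{-1}W(x,y)\dd\mu(y)$. A direct computation gives
$$
t(K_r,W)=\int_{[0,1]} d(x)^{r-1}\,t(K_{r-1},(W,\mu_x))\,\dd\mu(x),
$$
and since the link $K_2$-density equals $t(K_2,(W,\mu_x))=2\tau(x)/d(x)^2$, where $\tau(x)$ is the (normalised) triangle count through $x$, the induction hypothesis yields
$$
t(K_r,W)\;\ge\;\int_{[0,1]} d(x)^{r-1}\,h_{r-1}\!\bigl(2\tau(x)/d(x)^2\bigr)\,\dd\mu(x).
$$
If $h_{r-1}$ were convex, Jensen's inequality applied to the right-hand side (coupled with the $r=3$ bound to control~$\tau$) would essentially finish the proof; unfortunately $h_{r-1}$ is a union of countably many concave scallops and is neither globally convex nor concave.

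To handle the non-convexity I would extract further variational data from $W^{*}$. By Lemma~\ref{eq:  h'} the function $h_r$ is differentiable on the interior of each scallop, so Lagrange multipliers at a minimiser should supply some $\lambda=h_r'(\alpha)$ and, for $\mu$-a.e.\ $x$, a pointwise identity of the form ``density of $K_r$-extensions of $x$ equals $\lambda\cdot d(x)+C$''. Combined with the analogous identities for $W^{*}$ restricted to judiciously chosen Borel subsets, and with the $r=3$ bound on $\tau(x)$, these should force $W^{*}$ to be a step graphon whose step values and part measures coincide with one of the configurations in Definition~\ref{def: HrGk}(1)--(3). \textbf{Main obstacle:} the transition between adjacent scallops. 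The link densities $2\tau(x)/d(x)^2$ will not in general concentrate on a single scallop, so the naive ``link plus Jensen'' attempt fails precisely at the cusp points $\alpha=1-1/k$. The heart of the argument will be a cascading/exchange procedure showing that at a minimiser with $\alpha$ in the interior of the $k$-th scallop, $\mu$-almost every link density lies in the same scallop as~$\alpha$; this is essentially the $(k+1)$-partite skeleton of~$\mathcal{G}_k$, and I expect this is where the heaviest technical work will lie.
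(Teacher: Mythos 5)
This theorem is not proved in the paper at all; it is Reiher's clique density theorem, cited as a black-box input from~\cite{Reiher16}, so there is no ``paper's own proof'' to compare against. What you have sketched resembles the paper's proof of its \emph{main} result (Theorem~\ref{thm: main}, the characterisation of minimisers), which uses the neighbourhood graphon $N_W(x)$, the identity $t(K_r,W)=\int d(x)^{r-1}\,t(K_{r-1},N_W(x))\,\dd\mu(x)$, and Lagrange-multiplier-type pointwise relations. But that argument takes Theorem~\ref{thm: clique density} for both $r$ and $r-1$ as a \emph{hypothesis}: for instance, Claims~\ref{cl:1} and~\ref{cl:2} invoke Lemma~\ref{lem: Taylor} and Lemma~\ref{eq: h'}, which compute $h_r'$, and then use the inequality $t(K_r,W')\ge h_r(\alpha')$ to rule out perturbations. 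If you were instead trying to \emph{establish} $g_r=h_r$, this would be circular: you do not yet know that the minimiser's value is $h_r(\alpha)$, so you cannot take $\lambda=h_r'(\alpha)$ as the multiplier, and you cannot bound perturbed graphons by $h_r(\alpha')$ from below.

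There is also a concrete error in the proposed ``cascading/exchange'' step. The hope that ``at a minimiser with $\alpha$ in the interior of the $k$-th scallop, $\mu$-almost every link density lies in the same scallop as $\alpha$'' is false even for the known extremal graphons: for $[W]\in\mathcal{G}_k$ and $x$ in one of the small complete-partite parts $\Omega_i$ ($i\le k-1$), one computes $d(x)=1-c$ and $t(K_2,N_W(x))=\kappa_{2,k-1}\bigl(c/(1-c)\bigr)$ with $c/(1-c)\in(\tfrac1k,\tfrac1{k-1}]$, so the link density lands in $\mathbf{I}_{k-1}$, one scallop to the \emph{left} of $\alpha\in\mathbf{I}_k$. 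This is exactly the graphon analogue of the fact that the link of a vertex in $T_k(n)$ is $T_{k-1}(n')$ with edge density near $1-\tfrac1{k-1}$, not $1-\tfrac1k$. So the intended reduction to a single scallop cannot work, and the missing ``heart of the argument'' would in effect have to reproduce Reiher's genuinely intricate weighted induction scheme — it is not a technicality one can wave away. In short: the link decomposition and the variational identities you set up are correct and are indeed the tools the paper uses for its stability result, but they do not by themselves close the induction for the clique density theorem, and the one concrete closure step you propose is false.
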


We say that a graphon $W$ is \emph{$K_r$-extremal} if $t(K_r,W) = g_r(t(K_2,W))$. In other words, it is $K_r$-extremal if it has the minimum $K_r$-density among all graphons with the same edge density.
In fact, the asymptotic structure result for triangles by Pikhurko and Razborov was first derived via a statement about graph limits (see~\cite[Theorem~2.1]{PikhurkoRazborov17}):

\begin{theorem}[\cite{PikhurkoRazborov17}]\label{thm: K3 stability}
A graphon $W$ is $K_3$-extremal if and only if $[W]\in \mathcal{H}_3$.
\end{theorem}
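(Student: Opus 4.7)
The backward direction is immediate: by the remark after Definition~\ref{def: HrGk}, every graphon $W$ with $[W]\in\mathcal H_3$ satisfies $t(K_3,W)=h_3(t(K_2,W))$, and Theorem~\ref{thm: clique density} gives $h_3(\alpha)=g_3(\alpha)$ for all $\alpha\in[0,1]$, so $W$ is $K_3$-extremal. In what follows I sketch the forward direction. Let $W$ be $K_3$-extremal with $\alpha:=t(K_2,W)$. First I would dispatch the boundary regimes: since triangle-free graphons have $\alpha\le\tfrac12$ by Mantel, for $\alpha\le\tfrac12$ one has $g_3(\alpha)=0$, forcing $t(K_3,W)=0$ and hence $[W]\in\mathcal F\subseteq\mathcal H_3$; if $\alpha=1$ then $W\equiv 1$ a.e. The main case is $\alpha\in(1-\tfrac1k,1-\tfrac1{k+1})$ for some integer $k\ge 2$, on which by Lemma~\ref{eq:  h'} the function $h_3$ is smooth with derivative $h_3'(\alpha)=3(k-1)c$, where $c\in(\tfrac1{k+1},\tfrac1k)$ is the unique root of $\alpha=2kc-k(k+1)c^2$.

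The plan for the main case is a Lagrange/variational argument. Since $W$ minimizes $t(K_3,\cdot)$ subject to $t(K_2,\cdot)=\alpha$, taking first-order variations along bounded symmetric Borel test functions $U$ with $\iint U\,d\mu^{\otimes 2}=0$ yields Lagrange conditions on the codegree $h(x,y):=\int W(x,z)W(y,z)\,d\mu(z)$: namely $h=(k-1)c$ for $\mu^{\otimes 2}$-a.e.\ pair $(x,y)$ with $W(x,y)\in(0,1)$, together with complementary slackness $h\le(k-1)c$ a.e.\ on $\{W=1\}$ and $h\ge(k-1)c$ a.e.\ on $\{W=0\}$. From these I would extract the $\mathcal G_k$ structure in two stages. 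First, show that the equivalence relation $x\sim y\iff W(x,\cdot)=W(y,\cdot)$ a.e.\ partitions $[0,1]$ into at most $k$ measurable classes of positive $\mu$-measure, with at most one of them supporting fractional values of~$W$. Second, identify the $\{0,1\}$-valued classes as $\Omega_1,\dots,\Omega_{k-1}$ (each of measure $c$, pairwise complete and internally independent) and the remaining class as $\Omega_k$ of measure $b=1-(k-1)c$, which must carry a triangle-free $W[\Omega_k]$ of edge density $2c(b-c)/b^2$; triangle-freeness is enforced because any positive-measure set of edges inside $\Omega_k$ with positive in-$\Omega_k$ codegree would give $h(x,y)>(k-1)c$, violating either the codegree equality on $\{0<W<1\}$ or the slackness on $\{W=1\}$. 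Matching $t(K_2,W)=\alpha$ pins the $\Omega_k$-edge density, placing $[W]$ in $\mathcal G_k\subseteq\mathcal H_3$. The cusp $\alpha=1-\tfrac1{k+1}$ follows by compactness: $\mathcal H_3$ is closed in $\mathcal W$, and any cusp-extremal is a limit of scallop-interior extremals.

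The hard part is the first stage of structure extraction: upgrading the pointwise Lagrange conditions to a finite-type partition, and in particular ruling out continuously-varying neighborhood profiles on $\{0<W<1\}$ that formally satisfy $h\equiv(k-1)c$. The key lever is the interplay between the codegree equality on $\{0<W<1\}$ and the complementary-slackness inequalities on $\{W\in\{0,1\}\}$: two inequivalent neighborhood types coexisting in the fractional stratum would, combined with the rigid $\{0,1\}$-structure across the clique classes, force some cross-pair codegree to violate the slackness. Coupled with the extremal characterization of triangle-free graphons of the prescribed edge density on the residual class (a graphon version of the Mantel/Andr\'asfai-type bound), this forces the partition into exactly the $\mathcal G_k$ form.
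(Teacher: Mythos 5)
This theorem is not proved in the paper: it is imported verbatim from Pikhurko and Razborov~\cite{PikhurkoRazborov17}, and in fact serves as the base case $r=3$ of the minimal-counterexample induction that proves Theorem~\ref{thm: main}. So there is no ``paper proof'' to compare against; you are attempting to reprove a cited result.

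Your backward direction and your dispatch of the boundary regimes are fine, and the Lagrange conditions you derive on the codegree function $h(x,y)=\int W(x,z)W(y,z)\,\dd\mu(z)$ (constant $(k-1)c$ on $\{0<W<1\}$, with the corresponding inequalities on $\{W=1\}$ and $\{W=0\}$) are indeed the right first-order conditions; they match what Razborov's differential calculus gives and what this paper uses for general $r$ in Claims~\ref{cl:1} and~\ref{cl:2}. But your ``first stage of structure extraction'' is wrong as stated, and the gap it leaves is precisely the substance of the Pikhurko--Razborov argument. The equivalence relation $x\sim y\iff W(x,\cdot)=W(y,\cdot)$ a.e.\ does \emph{not} in general split $[0,1]$ into at most $k$ positive-measure classes for graphons in $\mathcal{G}_k$: if $\alpha$ is in the interior of $\bI_k$ so that $c<1/k$ and $b=1-(k-1)c>c$, then taking $W[\Omega_k]$ to be the complete bipartite graphon with part fractions $c/b$ and $1-c/b$ already yields $k+1$ classes $\Omega_1,\dots,\Omega_{k-1},A,B$, and more exotic triangle-free choices of $W[\Omega_k]$ give arbitrarily many (indeed a continuum of) classes. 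In particular $\Omega_k$ is not a single equivalence class of your relation, so ``identify the remaining class as $\Omega_k$'' is not available, and ``at most one class supports fractional values'' is also unjustified (the first-order conditions by themselves do not rule out several fractional strata). What the Lagrange conditions actually pin down is a much coarser type structure, and passing from that to the $\mathcal{G}_k$ form requires genuinely global information about the minimiser; your account of that step (``two inequivalent neighborhood types\dots would force some cross-pair codegree to violate the slackness'') is a heuristic rather than an argument, as you yourself acknowledge. As written, the forward direction has a genuine gap, and the intermediate finiteness-of-classes claim is false.
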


 Our main result in terms of graphons is as follows.

\begin{theorem}\label{thm: main} 
For each $r\geq 3$, a graphon $W$ is $K_r$-extremal if and only if  $[W]\in \mathcal{H}_r$.
\end{theorem}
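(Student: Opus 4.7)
\emph{If direction.} Given $[W]\in\mathcal{H}_r$, Definition~\ref{def: HrGk} allows direct computation of $t(K_r,W)$: the triangle-free piece on $\Omega_k$ contributes no $K_r$-copy, so the count collapses to the complete-partite formula and yields $t(K_r,W)=h_r(t(K_2,W))$. By Theorem~\ref{thm: clique density} this coincides with $g_r(t(K_2,W))$, so $W$ is $K_r$-extremal.

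\emph{Only if direction; easy reductions.} Fix a $K_r$-extremal $W$ with $\alpha:=t(K_2,W)$; the case $r=3$ is Theorem~\ref{thm: K3 stability}, so assume $r\ge 4$. If $\alpha\le 1-\tfrac1{r-1}$ then $g_r(\alpha)=0$, forcing $t(K_r,W)=0$ and $[W]\in\mathcal{F}$. If $t(K_3,W)=h_3(\alpha)$ then $W$ is also $K_3$-extremal and Theorem~\ref{thm: K3 stability} gives $[W]\in\mathcal{H}_3\subseteq\mathcal{H}_r$. Cusp values $\alpha=1-\tfrac1k$ with $k\ge r$ are handled separately via (the graphon limit of) stability for Tur\'an's theorem. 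The remaining and essential case is $\alpha\in\bigl(1-\tfrac1k,\,1-\tfrac1{k+1}\bigr)$ for some $k\ge r-1$ together with the strict excess $t(K_3,W)>h_3(\alpha)$, from which a contradiction must be derived.

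\emph{Variational relation.} By Lemma~\ref{eq:  h'}, $h_r$ is differentiable at $\alpha$; set $\lambda:=h_r'(\alpha)$, which is explicit in $k,\alpha$ via~\eqref{eq:hr}. Since $W$ minimises $t(K_r,\cdot)$ subject to $t(K_2,\cdot)=\alpha$, a first-variation Lagrange argument in graphon space produces, for $\mu$-a.e.\ $x\in[0,1]$, an affine relation
\begin{equation*}
t_x(K_r) \;=\; \lambda\cdot d(x) + C,
\end{equation*}
linking the degree $d(x):=\int W(x,y)\dd\mu(y)$ with the $K_r$-degree $t_x(K_r):=\int \prod W$ (integrated over the remaining $r-1$ coordinates). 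Here $C$ is pinned down by integrating against $\dd\mu(x)$ and by matching the values realised on the extremal graphon $H_\alpha$, where the same relation holds at every vertex.

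\emph{Contradiction via the link.} The excess $t(K_3,W)>h_3(\alpha)$, combined with the $r=3$ analogue of the above identity (an equality at every vertex of $H_\alpha$), forces a positive-measure set of vertices $x$ whose triangle-degree strictly exceeds the $H_\alpha$-value. Picking such an $x$ satisfying the $r$-identity with $d(x)>0$ and forming the link graphon $W':=W[N(x)]$ on the weighted neighbourhood of $x$, the triangle and $K_r$-degrees at $x$ become, up to scaling factors $d(x)^2$ and $d(x)^{r-1}$, the densities $t(K_2,W')$ and $t(K_{r-1},W')$. The $r$-identity translates into an affine \emph{upper} bound on $t(K_{r-1},W')$ in terms of $t(K_2,W')$, while the heavy-triangle condition yields $t(K_2,W')>\beta$ for an explicit threshold $\beta$. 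Applying Theorem~\ref{thm: clique density} at level $r-1$ to $W'$ gives the lower bound $t(K_{r-1},W')\ge h_{r-1}(t(K_2,W'))$; a numerical check with~\eqref{eq:hr} (the affine upper bound being tangent to $h_{r-1}$ at $\beta$) shows these bounds to be incompatible once $t(K_2,W')>\beta$, contradicting $K_r$-extremality. The main obstacle is the variational step itself: deriving the affine identity pointwise almost everywhere (not merely in integrated form), handling the boundary constraint $W\in[0,1]$ on admissible perturbations, and pinning down the constant $C$ to match $H_\alpha$; the subsequent reduction to the $(r-1)$-density problem then runs through standard calculations with $h_{r-1}$.
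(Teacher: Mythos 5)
Your overall architecture matches the paper's: derive a pointwise variational identity $t_x(K_r,W)=\lambda\,d_W(x)+C$ for a.e.\ $x$ (Claim~\ref{cl:1} in the paper, stated as $f_r(x)=0$ a.e.), use the excess $t(K_3,W)>h_3(\alpha)$ to locate a ``triangle-heavy'' point of positive measure (Claim~\ref{cl: N0}), form the link graphon $W'=N_W(x)$ as in Definition~\ref{def: nbrhd}, and contradict Theorem~\ref{thm: clique density} at level $r-1$. The reductions to the interior of a scallop, the disposal of cusps via Tur\'an stability, and the use of $\mathcal{H}_3\subseteq\mathcal{H}_r$ are all correct and in line with the paper. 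A small note on technique: you raise the issue of boundary constraints $W\in[0,1]$ on admissible perturbations, but the paper sidesteps this entirely by reweighting the \emph{measure} $\mu$ (introducing a density $u$ close to $1$) rather than perturbing the kernel $W$, so no clipping is ever needed in the Lagrange step.

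There is, however, a genuine gap in your ``contradiction via the link'' step, and it is not a routine calculation as you suggest. The final comparison
$t(K_{r-1},W')=p_{r-1,k-1}(\rho)\le p_{r-1,k(\rho)}(\rho)=h_{r-1}(\rho)<h_{r-1}(\alpha')$
hinges on the identity $p_{r-1,k-1}(\rho)=t(K_{r-1},W')$, and that identity uses
$\sqrt{(k-1)(k-1-k\rho)}=(k-1)(k\tau-1)$ with $\tau=c/d_W(x)$, which requires $\tau\ge 1/k$, i.e.\ $d_W(x)\le kc$. If one drops this constraint, the sign of $k\tau-1$ flips and the computation gives $t(K_{r-1},W')>p_{r-1,k-1}(\rho)$; for instance with $r=4$ and $s:=k\tau$ the difference is $-4(s-1)^3$, which is positive when $s<1$. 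So for vertices of large degree the inequalities run the wrong way and no contradiction appears. The paper closes this hole by a \emph{second} variational argument showing that a.e.\ pair is not ``$K_r$-heavy'' (Claim~\ref{cl:2}: $\mu^2(B_*)=0$, proved by downscaling $W$ on a carefully chosen Borel set $C$ constructed in Lemma~\ref{lem: shrinking}), and then deducing a maximum-degree condition $d_W(x)\le kc$ for a.e.\ $x$ (Claim~\ref{cl: degree}). Your proposal does not foresee the need for this second perturbation or any degree control, and without it the ``numerical check'' at level $r-1$ does not go through; this is the missing idea, not the first-variation identity you flag as the main obstacle.
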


This completely characterises all graphons achieving the equality in Theorem~\ref{thm: clique density} (and implies Theorem~\ref{th:graph}, see Section~\ref{sec-graphon-to-graph}).

\section{Preliminaries}

Let $\N:=\{1,2,\dots\}$ be the set of natural numbers. For $t\in \I N$, let $\mathbf{I}_t:= [1-\frac{1}{t}, 1-\frac{1}{t+1})$; these intervals partition $[0,1)$. Also, we denote $[t]:=\{1,\dots,t\}$ and, for a set $X$, let ${X\choose t}$ consist of all $t$-subsets of~$X$.
For $k,t\in\I N$, the \emph{$t$-th falling power of $k$} is $k^{(t)}:=k(k-1)\dots (k-t+1)$; note that
$k^{(t)}=0$ if $t>k$. 
 The \emph{indicator function} $\mathbf{1}_Y$ of a set $Y$
assumes value 1 on $Y$ and is 0  otherwise.
An unordered pair $\{x,y\}$ may be abbreviated to~$xy$.

A \emph{graph} is always a finite graph with non-empty vertex set. For a graph $G=(V,E)$ and bijection $\phi$ from $V$ to some set $X$, we denote by $\phi(G)$ the graph on $X$ with edge set $\phi(E):=\{\,\{\phi(x),\phi(y)\}\mid \{x,y\}\in E(G)\}$.  The \emph{edit distance} $|G\triangle H|$ between two graphs $G$ and $H$ of the same order is the minimum of $|E(G)\triangle E(\phi(H))|$ over all bijections $\phi: V(H)\rightarrow V(G)$; in other words, this is the minimum number of edge edits needed to make $G$ and $H$ isomorphic.

For $a,b,c\in \mathbb{R}$ we write $a = b\pm c$ if $b-c \leq a \leq b+c$. 
The constants in the hierarchies used to state our results have to be chosen from right to left. More precisely, if we claim that a result holds whenever e.g.\ $c \ll b \ll a_1,\dots, a_s$,
then this means that there are coordinate-wise non-decreasing functions $f:(0,1] \rightarrow (0,1]$ and $g:(0,1]^{s} \rightarrow (0,1]$ such that the claimed result holds whenever $0<c < f(b)$ and $0<b< g(a_1,\dots, a_s)$.

\subsection{Some notions and results of measure theory}\label{MT} 
Let us recall some basic notions that apply when $\C A$ is a $\sigma$-algebra on a set $X$ and $\nu$ is a measure on $(X,\C A)$. A function $f:X\to\I R$ is called \emph{$\C A$-measurable} if
the preimage of any open (equivalently, Borel) subset of $\I R$ is in~$\C A$. This class of functions is closed under arithmetic operations, pointwise limits, etc., see e.g.~\cite[Section~2.1]{Cohn13mt}.
A set $Y\subseteq X$ is called \emph{($\nu$-)\,null} if there is $Z\in\C A$ with  $Z\supseteq Y$ and $\nu(Z)=0$. We say that a property holds \emph{($\nu$-)\,a.e.} if the set of $x$ where it fails is $\nu$-null.
The \emph{$\nu$-completion} $\C A_\nu$ of $\C A$ consists of those $A\subseteq X$ for which there exist $B,C\in\C A$ with $B\subseteq A\subseteq C$
and $\nu(B)=\nu(C)$; equivalently, $\C A_\nu$ 
is the $\sigma$-algebra generated by the union of $\C A$ and all $\nu$-null sets.

For $k\in\I N$, let $\C B([0,1]^k)$ consist of all \emph{Borel} subsets of $[0,1]^k$ (i.e., it is the $\sigma$-algebra generated by open subsets of $[0,1]^k$).
It is easy to show (see e.g.~\cite[Example~5.1.1]{Cohn13mt}) that $\C B([0,1]^k)$
is equal to the product 
of $k$ copies of the Borel $\sigma$-algebra on~$[0,1]$. When $k$
is clear, we abbreviate $\C B([0,1]^k)$ to~$\C B$.

Let $\mu$ be a probability measure on $([0,1],\C B)$. By $\mu^k$, we denote the measure on~$([0,1]^k,\C B)$ which is the product of $k$ copies of~$\mu$. We call the sets in the $\mu^k$-completion of $\C B([0,1)^k)$ \emph{measurable} and, when $k$ is understood, denote this $\sigma$-algebra by~$\C B_\mu$.
We call a function $f: [0,1]^k \to\I R$ \emph{Borel} (resp.\ \emph{%
measurable}) if it is $\C B$-measurable (resp.\ $\C B_\mu$-measurable).%

Let us state some results that will be useful for us. The first one is an easy consequence of the
countable addivitity of a measure, see e.g.~\cite[Proposition~1.2.5]{Cohn13mt}.

\begin{lemma}[Continuity of measure]
\label{lm:CM} 
For every measure space $(X,\C A,\nu)$
and every nested sequence $X_0\subseteq X_1\subseteq X_2\subseteq\ldots$ of sets in $\C A$, 
the measure of their union $\cup_{n\in\I N} X_n$ is equal to $\lim_{n\to\infty} \nu(X_n)$.
\end{lemma}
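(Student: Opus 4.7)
The plan is to reduce the statement to the countable additivity axiom for $\nu$ by decomposing the nested union into a disjoint union of ``shells''. First I would set $Y_0 := X_0$ and, for each $n\geq 1$, define $Y_n := X_n\setminus X_{n-1}$. Since $\C A$ is a $\sigma$-algebra, it is closed under relative complements, so each $Y_n$ lies in $\C A$. The nesting $X_{n-1}\subseteq X_n$ forces the family $\{Y_n\}_{n\geq 0}$ to be pairwise disjoint, and a one-line induction shows $\bigcup_{k=0}^{n} Y_k = X_n$ for every $n$; passing to the limit gives $\bigcup_{n\geq 0} Y_n = \bigcup_{n\geq 0} X_n$.

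Next I would apply countable additivity of $\nu$ to the disjoint family $\{Y_n\}_{n\geq 0}$ and then recognise each partial sum as the $\nu$-measure of a finite disjoint union, namely~$X_N$:
\begin{equation*}
\nu\Bigl(\bigcup_{n\geq 0} X_n\Bigr)  =  \nu\Bigl(\bigcup_{n\geq 0} Y_n\Bigr)  =  \sum_{n=0}^{\infty} \nu(Y_n)  =  \lim_{N\to\infty} \sum_{n=0}^{N} \nu(Y_n)  =  \lim_{N\to\infty} \nu(X_N),
\end{equation*}
where the last equality uses finite additivity on the decomposition $X_N = Y_0 \cup \dots \cup Y_N$. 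The limit on the right-hand side is well defined because $\nu(X_N)$ is monotone non-decreasing in $N$, so it converges in $[0,\infty]$.

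The only point that warrants a brief sanity check is the case when $\nu(X_{n_0}) = \infty$ for some $n_0$: then monotonicity of $\nu$ forces $\nu(X_N) = \infty$ for every $N\geq n_0$ as well as $\nu(\bigcup_{n\geq 0} X_n) = \infty$, so the asserted equality holds trivially in the extended reals. I foresee no genuine obstacle here; this is the classical ``continuity from below'' argument, included explicitly only because subsequent sections of the paper will invoke it repeatedly without further comment.
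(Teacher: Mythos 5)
Your proof is correct and is precisely the standard ``continuity from below'' argument that the paper itself does not spell out but instead delegates to the cited reference (Cohn, Proposition~1.2.5), which uses this same disjoint-shell decomposition and countable additivity. Nothing to add; the treatment of the $\nu(X_{n_0})=\infty$ case is a nice touch but unnecessary here since the paper only applies the lemma to probability measures.
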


The following result will allow us to work with just Borel sets and functions.

\begin{lemma}\label{lem:clean} For every measure space $(X,\mathcal{A},\nu)$ and an $\mathcal{A}_\nu$-measurable function $f:X\to [0,1]$, there is an $\C A$-measurable function $f':X\to[0,1]$ such that $f'=f$ $\nu$-a.e.
\end{lemma}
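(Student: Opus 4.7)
The plan is to reduce the claim to the defining property of the completion via a standard approximation argument, first for indicator functions, then for simple functions, and then for arbitrary $\C A_\nu$-measurable functions into $[0,1]$.

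First I would handle indicator functions. If $A\in\C A_\nu$, then by the definition of the completion there exist $B,C\in\C A$ with $B\subseteq A\subseteq C$ and $\nu(C\setminus B)=0$. Then $\mathbf{1}_B$ is $\C A$-measurable, agrees with $\mathbf{1}_A$ off the $\nu$-null set $C\setminus B$, and takes values in $[0,1]$. By linearity, any $\C A_\nu$-measurable simple function $s=\sum_{i=1}^{m} \alpha_i\mathbf{1}_{A_i}$ into $[0,1]$ (with $A_i\in\C A_\nu$) admits an $\C A$-measurable modification $s'=\sum_{i=1}^{m}\alpha_i\mathbf{1}_{B_i}$ agreeing with $s$ off the null set $\bigcup_i(C_i\setminus B_i)$; if necessary one can truncate $s'$ by $\max\{0,\min\{1,s'\}\}$ to keep values in $[0,1]$, since truncation preserves $\C A$-measurability and does not change the function off the exceptional null set.

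Next I would approximate a general $\C A_\nu$-measurable $f:X\to[0,1]$ by the standard dyadic simple functions $s_n(x):=2^{-n}\lfloor 2^n f(x)\rfloor$ (with the convention $s_n(x):=1$ when $f(x)=1$), so that each $s_n$ is $\C A_\nu$-measurable, takes finitely many values in $[0,1]$, and $s_n(x)\to f(x)$ pointwise on all of $X$. Apply the previous step to obtain $\C A$-measurable $s_n':X\to[0,1]$ with $s_n'=s_n$ off a set $N_n$ contained in some $M_n\in\C A$ with $\nu(M_n)=0$. Put $M:=\bigcup_{n\in\N} M_n\in\C A$; by countable subadditivity $\nu(M)=0$. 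On $X\setminus M$ we have $s_n'=s_n$ for every $n$, hence $s_n'(x)\to f(x)$. Finally define
$$
f'(x):=\begin{cases} \limsup_{n\to\infty} s_n'(x), & \text{if this limsup lies in }[0,1],\\ 0, & \text{otherwise.}\end{cases}
$$
Since each $s_n'$ is $\C A$-measurable, $\limsup_{n}s_n'$ is $\C A$-measurable, so $f'$ is $\C A$-measurable and takes values in $[0,1]$. For $x\notin M$, $s_n'(x)\to f(x)\in[0,1]$, giving $f'(x)=f(x)$. Thus $f'=f$ on $X\setminus M$, i.e.\ $\nu$-a.e., as required.

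The only delicate point in the argument is the bookkeeping: the simple-function step introduces a new exceptional null set at each stage, and we need these countably many null sets to amount to a single $\nu$-null set. This is precisely what countable subadditivity of $\nu$ supplies, and it is the reason we work with a pointwise-everywhere converging approximation $(s_n)$ rather than one that only converges a.e.
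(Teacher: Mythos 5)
Your proof is correct and is essentially the paper's argument in a slightly more verbose form: the paper writes $f=\sum_{i\ge 0}2^{-i}\mathbf{1}_{B_i}$ via its binary expansion and replaces each digit-set $B_i\in\C A_\nu$ by an $\C A$-set, which is exactly your dyadic simple-function approximation $s_n=2^{-n}\lfloor 2^n f\rfloor$ unwound into partial sums. Both proofs hinge on the same two points — fixing countably many $\C A_\nu$-sets using the definition of the completion, and absorbing the resulting countably many null sets into a single one — so the approaches coincide.
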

\begin{proof}
\hide{
Recall that the completion $\mathcal{A}_\nu$ consists of exactly those sets $A$ such that for some $B,N\in\mathcal{A}$ with $\nu(N)=0$ we have $A\bigtriangleup B\subseteq N$. Also, a function $f':X\to[0,1]$ is $\mathcal{A}$-measurable if for every Borel set $B\subseteq [0,1]$ its preimage  $\{x\in X\mid f'(x)\in B\}$ is in $\mathcal{A}$.
}
For $x\in X$, write $f(x)\in [0,1]$ in binary, $f(x)=\sum_{i=0}^\infty b_i(x)2^{-i}$ where each $b_i(x)\in\{0,1\}$ and, for definiteness, we require that infinitely many of $b_i(x)$ are 0 (thus we do not allow expansions where all digits are eventually 1). Note that each $B_i:=\{x\in X\mid b_i(x)=1\}$ is $\mathcal{A}_\nu$-measurable because it is the preimage under the measurable function $f$ of the set consisting of $r\in [0,1]$ such that $i$-th binary digit of $r$ is 1, which is Borel as a union of some intervals.

By the definition of $\mathcal{A}_\nu$, for each $i\in\I N$ there are $A_i,N_i\in\mathcal{A}$ such that $\nu(N_i)=0$
and $A_i\bigtriangleup B_i\subseteq N_i$. Let $g:=\sum_{i=0}^\infty 2^{-i}\mathbf{1}_{A_i}$. Then $g$ is $\C A$-measurable as the countable convergent sum of $\C A$-measurable functions. Also, the set where $f$ and $g$ differ is a subset of $N:=\cup_{i=0}^\infty N_i$, which has measure 0.
Some points in $A_0 \subseteq N_0$ may have $g$-value greater than $1$, so let $f'(x):=\min\{g(x),1\}$. The set where the $\C A$-measurable function $f'$ differs from $f$ is still a subset of the null set $N$, as required.
\end{proof}

The following result will be frequently used (allowing us, in particular, to change the order of integration), so we state it fully. For a proof, see e.g.~\cite[Proposition 5.2.1]{Cohn13mt}. 
	
	\begin{theorem}[Tonelli's theorem]
	\label{th:Tonelli}
	Let $(X,\mathcal{A},\mu)$ and $(Y,\mathcal{C},\nu)$ be $\sigma$-finite measure spaces and let $f:X\times Y\to [0,\infty]$ be $\mathcal{A}\times\mathcal{C}$-measurable. Then
	\begin{enumerate}
		\item for every $x\in X$ the function $f(x,\cdot): Y\to [0,\infty]$, $y\mapsto f(x,y)$, is $\mathcal{C}$-measurable;
		\item the function $x\mapsto \int_Y f(x,y)\dd\nu(y)$ is $\mathcal{A}$-measurable;
		\item we have $\int_X \left( \int_Y f(x,y)\dd\nu(y)\right)\dd\mu(x)=\int_{X\times Y} f(x,y)\dd(\mu\times \nu)(x,y)$.
		\end{enumerate}	
	\end{theorem}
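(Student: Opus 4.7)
The plan is to establish all three conclusions simultaneously by the standard four-stage extension argument, going from indicator functions of rectangles, to indicators of general measurable sets, to nonnegative simple functions, and finally to arbitrary nonnegative measurable $f$ by monotone convergence. For the base stage, if $f=\mathbf{1}_{A\times C}$ with $A\in\mathcal{A}$ and $C\in\mathcal{C}$, then for every $x$ the slice equals $\mathbf{1}_A(x)\,\mathbf{1}_C$ which is trivially $\mathcal{C}$-measurable, and $\int_Y f(x,y)\dd\nu(y)=\mathbf{1}_A(x)\,\nu(C)$ is a scalar multiple of an indicator and hence $\mathcal{A}$-measurable, with integral $\mu(A)\nu(C)$, which is exactly $(\mu\times\nu)(A\times C)$ by the definition of the product measure.

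Next, let $\mathcal{D}\subseteq\mathcal{A}\otimes\mathcal{C}$ be the collection of those $E$ for which the three conclusions hold with $f=\mathbf{1}_E$. In the case when both $\mu$ and $\nu$ are finite, I would check that $\mathcal{D}$ is a $\lambda$-system: closure under complements uses $\mathbf{1}_{E^{c}}=1-\mathbf{1}_E$ together with finiteness of the integrals, and closure under countable disjoint unions follows from the monotone convergence theorem applied both to the inner and outer integrals. Since $\mathcal{D}$ contains the $\pi$-system of measurable rectangles, Dynkin's $\pi$-$\lambda$ theorem gives $\mathcal{D}=\mathcal{A}\otimes\mathcal{C}$. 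For general $\sigma$-finite $\mu,\nu$, write $X=\bigcup_i X_i$ and $Y=\bigcup_j Y_j$ with finite-measure pieces, apply the finite case on each $X_i\times Y_j$, and sum (again by monotone convergence) to recover the result for $E$. Linearity then extends all three conclusions to nonnegative simple functions.

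For a general $\mathcal{A}\otimes\mathcal{C}$-measurable $f:X\times Y\to[0,\infty]$, pick an increasing sequence of nonnegative simple functions $f_n\uparrow f$. Each slice $f_n(x,\cdot)$ is $\mathcal{C}$-measurable and increases pointwise to $f(x,\cdot)$, so conclusion (1) passes to the limit. Setting $g_n(x):=\int_Y f_n(x,y)\dd\nu(y)$, the sequence $g_n$ is $\mathcal{A}$-measurable and by monotone convergence increases to $g(x):=\int_Y f(x,y)\dd\nu(y)$, giving (2). A second application of monotone convergence, once on $X$ and once on $X\times Y$, upgrades the equality $\int_X g_n\dd\mu=\int_{X\times Y}f_n\dd(\mu\times\nu)$ established at the simple-function stage to (3) for $f$.

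The main obstacle is the $\lambda$-system step: verifying measurability of the $x$-section map for arbitrary $E\in\mathcal{A}\otimes\mathcal{C}$ cannot be done by direct description, and the closure under complements requires $\sigma$-finiteness, which is why the reduction to finite measures via a pointwise exhaustion $X_i\times Y_j$ is needed before Dynkin's lemma can be invoked. Every subsequent step is then a routine application of monotone convergence and linearity.
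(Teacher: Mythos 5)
Your four-stage argument (rectangles $\to$ Dynkin's $\pi$--$\lambda$ theorem for indicators $\to$ simple functions $\to$ monotone convergence) is the standard textbook proof of Tonelli's theorem, and it is correct, including the reduction from $\sigma$-finite to finite measure spaces before invoking the $\lambda$-system closure under complements. However, there is nothing in the paper to compare it against: the authors explicitly treat Theorem~\ref{th:Tonelli} as a black box and refer the reader to Cohn's \emph{Measure Theory} (Proposition 5.2.1) for the proof, so the paper contains no argument of its own for this statement. For what it is worth, your proof matches the one in Cohn and other standard references; the only detail worth spelling out more carefully is the exhaustion step, where you need the finite-measure conclusion on each $X_i\times Y_j$ phrased for the restricted measure spaces, after which the passage to the full $\sigma$-finite statement is indeed a monotone limit as you indicate.
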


Also,  we will need the following result.

\begin{theorem}[Sierpi\'nski's theorem]
\label{thm: nonatomic} If $(X,\mathcal{A},\nu)$ is a non-atomic measure space with $\nu(X)<\infty$, then for every $\rho\in [0,\nu(X)]$ there is $Y\in \mathcal{A}$ with $\nu(Y)=\rho$.  \end{theorem}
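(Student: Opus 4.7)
The plan is to prove the theorem by a greedy accumulation argument, after first extracting from non-atomicity the standard fact that positive-measure sets contain positive-measure subsets of arbitrarily small measure.

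First I would prove the preliminary claim: for every $A\in\mathcal{A}$ with $\nu(A)>0$ and every $\varepsilon>0$, there exists $B\in\mathcal{A}$ with $B\subseteq A$ and $0<\nu(B)<\varepsilon$. Setting $A_0:=A$, I apply non-atomicity repeatedly: given $A_n$ with $\nu(A_n)>0$, pick $A_{n+1}\subsetneq A_n$ measurable with $0<\nu(A_{n+1})<\nu(A_n)$, and, after replacing $A_{n+1}$ by $A_n\setminus A_{n+1}$ if necessary, ensure $\nu(A_{n+1})\leq\nu(A_n)/2$. Then $0<\nu(A_n)\leq 2^{-n}\nu(A_0)$, and some $A_n$ is the required $B$.

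Second, fix $\rho\in[0,\nu(X)]$; the cases $\rho=0$ and $\rho=\nu(X)$ are trivial (take $Y=\es$ or $Y=X$), so assume $0<\rho<\nu(X)$. I would build an increasing sequence $\es=Y_0\subseteq Y_1\subseteq\ldots$ in $\mathcal{A}$ with $\nu(Y_n)\leq\rho$. Given $Y_n$, set
\[
t_n:=\sup\{\,\nu(B)\mid B\in\mathcal{A},\ B\subseteq X\sm Y_n,\ \nu(B)\leq\rho-\nu(Y_n)\,\},
\]
and choose $B_n\sub X\sm Y_n$ measurable with $\nu(B_n)\leq\rho-\nu(Y_n)$ and $\nu(B_n)\geq t_n/2$ if $t_n>0$, or $B_n:=\es$ if $t_n=0$. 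Put $Y_{n+1}:=Y_n\cup B_n$; by construction $\nu(Y_{n+1})\leq\rho$, and the sequence is increasing.

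Finally, let $Y:=\cup_n Y_n$. By Lemma~\ref{lm:CM}, $\nu(Y)=\lim_{n\to\infty}\nu(Y_n)=:a\leq\rho$, and it remains to show $a=\rho$. Suppose to the contrary that $a<\rho$. Then $\nu(X\sm Y)\geq\nu(X)-a>0$, so by the preliminary claim there is $B^*\sub X\sm Y$ with $0<\nu(B^*)<\rho-a$. For each $n$, we have $B^*\sub X\sm Y_n$ and $\nu(B^*)<\rho-a\leq\rho-\nu(Y_n)$, so $B^*$ is a candidate in the supremum defining $t_n$, yielding $t_n\geq\nu(B^*)>0$. Hence $\nu(B_n)\geq\nu(B^*)/2$ for all $n$, and since the $B_n$ are pairwise disjoint (as subsets of the disjoint sets $Y_{n+1}\sm Y_n$), their total measure is infinite, contradicting $\nu(X)<\infty$. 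Therefore $a=\rho$ and $Y$ is the desired set.

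The only delicate point is making the greedy step simultaneously respect the budget $\nu(Y_n)\leq\rho$ and guarantee progress towards~$\rho$. The constraint $\nu(B)\leq\rho-\nu(Y_n)$ in the definition of $t_n$ takes care of the former, while the choice of $\nu(B_n)\geq t_n/2$ cushions the (possibly) non-attained supremum and yields the contradiction with finiteness of $\nu(X)$ in the last step.
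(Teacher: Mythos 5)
Your proof is correct, but it takes a genuinely different route from the paper's. The paper explicitly proves only the special case $(X,\mathcal{A})=([0,1],\mathcal{B})$ needed later: it observes that the cumulative distribution function $x\mapsto\mu([0,x))$ is continuous (by continuity of measure together with non-atomicity), and then invokes the Intermediate Value Theorem to hit any prescribed value $\rho\in[0,1]$. You instead prove the full general statement via a greedy exhaustion: first extract from non-atomicity the standard "small subsets of positive measure exist" lemma, then build an increasing sequence $(Y_n)$ whose measures stay under budget $\rho$, choosing $B_n$ with measure at least half the available supremum so that a failure to reach $\rho$ in the limit would produce infinitely many pairwise disjoint sets of uniformly positive measure inside $X$, contradicting $\nu(X)<\infty$. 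Your argument is more work but more general; the paper's is essentially a one-liner that exploits the linear order structure of $[0,1]$ and suffices for its purposes. Both are valid, and one minor presentational point in yours — asserting $\nu(B_n)\geq t_n/2$ rather than, say, $\nu(B_n)>t_n/3$ when the supremum might not be attained — is harmless, since any fixed positive fraction of $t_n$ would do in the final counting step.
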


\begin{proof} We will use only the special case  $(X,\mathcal{A})=([0,1],\C B)$ of the theorem, whose proof is very simple. Namely,
the function $x\mapsto \mu([0,x))$ for $x\in [0,1]$ is continuous by Lemma~\ref{lm:CM} (and by the non-atomicity of $\mu$); now, the Intermediate Value Theorem gives the required. 
\end{proof}


\subsection{Further results on  graphons}\label{graphons} 
Usually, a graphon $W$ is defined as a symmetric measurable function $[0,1]^2\to [0,1]$ with respect to the Lebesgue measure~$\mu$ on~$[0,1]$. 
Given such $W$, we can, by  Lemma~\ref{lem:clean}, modify it on a $\mu^2$-null set to obtain a Borel function $U':[0,1]^2\to [0,1]$. Then letting $U(x,y):=\frac12(U'(x,y)+U'(y,x))$ for $(x,y)\in [0,1]^2$, we obtain a Borel symmetric function which is still a.e.\ equal to~$W$. Thus $[U]=[W]$ and our requirement that graphons are represented by Borel functions is not a restriction. Also, on the other hand, we did not enlarge $\C W$ by allowing non-uniform measures on $[0,1]$ as our definition is a special case of the general form of graphons, as defined in~\cite[Section~13.1]{Lovasz:lngl}. The motivation for our definition  is that our proof requires changing the measure a few times while the assumption that the function $W$ is Borel ensures that all sets and functions that we will encounter  are everywhere defined and Borel.

We could have also applied the so-called \emph{purification} of the graphon introduced by Lov\'asz and Szegedy~\cite{LovaszSzegedy10misc}
(see also~\cite[Section~13.3]{Lovasz:lngl}) which would eliminate a few (simple) applications of the continuity of measure in our proof.
However, we decided against using this (non-trivial) result as this could obscure the simplicity of this step. 


One consequence of Tonelli's theorem (Theorem~\ref{th:Tonelli}) and the identity $\C B([0,1]^2)=\C B([0,1])\times \C B([0,1])$
(and our requirement that the function $W$ is Borel) is that for every $x\in[0,1]$ the \emph{section} $W(x,\cdot):[0,1]\to[0,1]$, $y\mapsto W(x,y)$, is a Borel function. 

For a graph $F$ on $[r]$ with $1,\dots, k$ designated as roots, define its \emph{rooted homomorphism density} in $W$ by
 $$
 t_{x_1,\dots, x_k}(F,W):=\int_{[0,1]^{r-k}} \prod_{ij\in E(F)} W(x_i,x_j) \prod_{i=k+1}^{r} \dd \mu(x_{i}),\quad
  \mbox{for } (x_1,\dots, x_k) \in[0,1]^k.
 $$
 By Tonelli's theorem, this is an (everywhere defined) Borel function $[0,1]^k\to [0,1]$.

Note that if $F'$ is obtained from a graph $F$ by rooting it on a fixed vertex, then
$$\int_{[0,1]} t_x(F',W) \dd \mu(x) = t(F,W).$$ 
If $F$ has two roots 1 and 2 that are adjacent and $F^-$ is obtained from $F$ by removing the edge $\{1,2\}$, then $t_{x_1,x_2}(F,W)=W(x_1,x_2)\,t_{x_1,x_2}(F^-,W)$.

For example, if $W=W_G$ for a graph $G$ with $V(G)=[n]$ and $v_1\in [n]$ then, for all $x\in [\frac{v_1-1}n,\frac{v_1}n)$, $t_{x}(F,G)$ is the \emph{rooted density} of $F$ in $(G,v_1)$, namely, the probability for independent uniformly distributed vertices $v_2,\dots,v_r\in V(G)$ that for every $ij\in E(F)$ we have $v_iv_j\in E(G)$. 

Define the \emph{degree} of $x\in [0,1]$  in $W$ by 
 $$d_W(x):=t_x(K_2,W)=\int_{[0,1]} W(x,y)\dd\mu(y).$$ 
 When the graphon is understood, we may abbreviate $d_W(x)$ to $d(x)$. By Tonelli's theorem, $d(x)$ is defined for every $x\in[0,1]$ and 
 \begin{align}\label{eq: degree tK2}
 \int_{[0,1]} d_W(x)\dd\mu(x)=t(K_2,W).
 \end{align}

 \begin{definition}\label{def: nbrhd}
 	For a graphon $(W,\mu)$ and $x\in [0,1]$ with $d(x)\not=0$, define the \emph{neighbourhood} $N_W(x)=N(x)$ of $x$ in $W$ as the graphon $(W,\mu')$, where 
 	$$\dd\mu'(y):=\frac{W(x,y)}{d(x)}\dd\mu(y),$$
 that is, $\mu'(A):=\int_A \frac{W(x,y)}{d(x)}\dd\mu(y)$ for Borel $A\subseteq [0,1]$.
 \end{definition}
 
 Note that, in the above definition, the function $W$ remains the same and only the measure is changed.  With this definition, we have that for every $r\ge 2$
  \begin{eqnarray}\label{eq: nhd relation}
  t(K_r,N_W(x))&=&\int_{[0,1]^r} \prod_{1\le i<j\le r} W(x_i,x_j) \prod_{i=1}^{r} \dd \mu'(x_{i})\nonumber \\
   &=& \int_{[0,1]^r} \prod_{1\le i<j\le r} W(x_i,x_j) \prod_{i=1}^r \frac{W(x,x_i)}{d_W(x)}\ \prod_{i=1}^{r} \dd \mu(x_{i})\nonumber\\
      &=& \frac{t_x(K_{r+1},W)}{(d_W(x))^r}.
 \end{eqnarray}



For $r, \ell,m \in \mathbb{N}$,  $B\subseteq [0,1]^{m}$ and $\mathbf{y} =(y_1,y_2) \in [0,1]^2$, define
\begin{align}\label{eq: B def B}
\begin{split}
B^{r,\ell} &:=\textstyle \left\{ \mathbf{x} \in [0,1]^{r}: \big|\big\{ S \in \binom{[r]}{m}: (x_{i})_{i\in S} \in B \big\}\big| = \ell \right\}, \\
B^{r,\ell}_{+} &:= \bigcup_{i\geq \ell} B^{r,i}, \\
 B^{r,\ell}(\mathbf{y})&:= \Big\{ (y_3,\dots, y_r) \in [0,1]^{r-2} : (y_1,\dots, y_r) \in B^{r,\ell}\Big\}, \text{ and } \\
B^{r,\ell}_{+} (\mathbf{y} )&:= \bigcup_{i\geq \ell} B^{r,i}(\mathbf{y}).
\end{split}
\end{align}
For example, $B^{r,\ell}$ consists of those $r$-tuples from $[0,1]$ such that the number of  $m$-subtuples that belong to $B\subseteq[0,1]^m$ is exactly~$\ell$.  We will later need the property that $B^{r,2}_+$ is much smaller  in measure than $B \subseteq[0,1]^2$. This is not true in general: for instance, consider $U\subseteq [0,1]$ with $\mu(U)=\eta\ll 1/r$ and $B=\{(x,y) : \{x,y\} \cap U \neq \emptyset \}$ when we have $\mu^r(B^{r,2}_{+}) = \Omega(\mu^2(B))$.
However, the following lemma gives the desired property provided that we can pass to a subset of $B$ first. We call a set $B\subseteq [0,1]^2$ \emph{symmetric} if $(x,y)\in B$ implies that $(y,x)\in B$.

\begin{lemma}\label{lem: shrinking}
Let $\mu$ be a non-atomic measure on $([0,1],\C B)$,  $\eta^{-1}\in \mathbb{N}$
and $B \subseteq [0,1]^2$ be a symmetric Borel set.
Then there exists a symmetric Borel subset $C \subseteq B$ satisfying the following for all $r\geq 3$:
\begin{enumerate}[label={\rm (C\arabic*)}]
\item \label{B'1} $\eta^2 \mu^2(B)\leq\mu^2(C) \leq \mu^2(B)$;
\item \label{B'2} For each $\mathbf{x} \in [0,1]^2$,
we have $\mu^{r-2}\left( C^{r,2}_{+}(\mathbf{x}) \right) \leq 2r\eta$;
\item \label{B'3} $\mu^r( C^{r,2}_{+} ) \leq r^3 \eta \mu^2(C)$.
\end{enumerate}
\end{lemma}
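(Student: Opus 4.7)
The plan is to construct $C$ as the intersection of $B$ with a single symmetric ``cell'' of a fine grid partition of $[0,1]^2$. Using Sierpi\'nski's theorem (Theorem~\ref{thm: nonatomic}), we first partition $[0,1]$ into $k:=\eta^{-1}$ Borel parts $P_1,\ldots,P_k$ each of $\mu$-measure $\eta$, which induces a partition of $[0,1]^2$ into $k^2$ rectangles. By averaging over the $\binom{k+1}{2}$ unordered pairs $\{i,j\}$ (allowing $i=j$), there exist indices $i^*,j^*$ such that
$$
\mu^2\bigl(B\cap((P_{i^*}\times P_{j^*})\cup(P_{j^*}\times P_{i^*}))\bigr)\ge \frac{2\mu^2(B)}{k(k+1)}\ge\eta^2\mu^2(B),
$$
and we set $C$ to this intersection. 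Since $B$ is symmetric, so is $C$, giving a Borel subset of $B$ that satisfies \ref{B'1}.

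The key structural feature of this $C$ is that for every $x\in[0,1]$ the section $N_C(x):=\{y\mid (x,y)\in C\}$ lies inside a single part, so $d_C(x):=\mu(N_C(x))\le\eta$ holds pointwise and $\mu^2(C)\le 2\eta^2$. Both \ref{B'2} and \ref{B'3} then follow by a union bound over unordered pairs $\{S,T\}\subseteq\binom{[r]}{2}$ with $S\ne T$ of the measure of the event ``$(x_i)_{i\in S}\in C$ and $(x_i)_{i\in T}\in C$'', split by $|S\cap T|$. The $r\binom{r-1}{2}$ overlapping pairs ($|S\cap T|=1$) contribute at most $\int d_C^2\,\dd\mu\le\eta\,\mu^2(C)$ each after integrating over the shared coordinate, while the $\tfrac12\binom{r}{2}\binom{r-2}{2}$ disjoint pairs contribute at most $\mu^2(C)^2\le 2\eta^2\mu^2(C)$ each; summing yields \ref{B'3}. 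For \ref{B'2}, after fixing $\mathbf{x}=(x_1,x_2)$, the dominant contributions come from the $2(r-2)$ pair-of-pairs of the form $\{\{1,2\},\{1,k\}\}$ or $\{\{1,2\},\{2,k\}\}$, which are active only when $(x_1,x_2)\in C$, and from the $(r-2)$ pair-of-pairs of the form $\{\{1,k\},\{2,k\}\}$, active only when $x_1,x_2$ lie in the same part of the partition. When $i^*\ne j^*$ these two regimes are mutually exclusive, so the bound is at most $2(r-2)\eta$ plus lower-order terms.

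The main obstacle is the combinatorial bookkeeping needed to enumerate all types of pair-of-pairs $\{S,T\}$ (by $|S\cap T|$ and by how they intersect $\{1,2\}$) and to verify that the accumulated lower-order contributions, each of order $r^a\eta^b$ with $b\ge 2$, fit inside the advertised constants $2r$ and $r^3$. A useful simplification is that both \ref{B'2} and \ref{B'3} are trivial whenever their right-hand sides exceed~$1$, which confines the interesting regime to sufficiently small $\eta$ (relative to $r$), where such lower-order terms are absorbed automatically.
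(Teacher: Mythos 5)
Your construction of $C$ matches the paper's (a single symmetric grid cell obtained by averaging), so \ref{B'1} is fine, and the structural observation that every section $N_C(x)$ lies inside one of $I_{i^*},I_{j^*}$ (hence $d_C\le\eta$ pointwise, $\mu^2(C)\le 2\eta^2$) is the right one to make. However, the way you then go after \ref{B'2} and \ref{B'3} — a union bound over unordered pairs $\{S,T\}\subseteq\binom{[r]}{2}$ — is genuinely different from the paper's, and it is too lossy to land inside the stated constants. The paper instead observes that the event ``at least two pairs land in $C$'' forces some $x_i$ with $i\ge 3$ to lie in $I_{i^*}\cup I_{j^*}$ (since the two pairs are distinct and every endpoint of a $C$-pair lies in that union), giving $\mu^{r-2}(C^{r,2}_+(\mathbf{x}))\le (r-2)\cdot 2\eta\le 2r\eta$ by a union bound over \emph{indices}, not pairs of pairs; \ref{B'3} then follows by integrating \ref{B'2} over $C$ and multiplying by $\binom r2$, which gives $\binom r2\cdot 2r\eta\,\mu^2(C)<r^3\eta\,\mu^2(C)$ with no case analysis.

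Concretely, two gaps. For \ref{B'2}, the exclusivity you invoke between the families $\{\{1,2\},\{1,k\}\},\{\{1,2\},\{2,k\}\}$ and $\{\{1,k\},\{2,k\}\}$ fails when $i^*=j^*$: if $(x_1,x_2)\in C$ then $x_1,x_2\in I_{i^*}$, so both families are active simultaneously and your union bound yields $\approx 3(r-2)\eta$, which exceeds $2r\eta$ once $r\ge 7$ (and nothing forces the averaging step to produce $i^*\ne j^*$ — think of $B$ supported on $\bigcup_i I_i\times I_i$). For \ref{B'3}, your sum is
$$
\frac{r(r-1)(r-2)}{2}\,\eta\,\mu^2(C)\;+\;\frac{r(r-1)(r-2)(r-3)}{4}\,\eta^2\,\mu^2(C),
$$
whose second term is of order $r^4\eta^2\mu^2(C)$; this is comparable to, and for moderate $\eta$ larger than, the target $r^3\eta\,\mu^2(C)$ (for instance it already fails at $r=11,\ \eta=1/2$). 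The escape hatch ``trivial when the right-hand side exceeds $1$'' genuinely applies to \ref{B'2} (whose RHS is $2r\eta$), but not to \ref{B'3}: its RHS is $r^3\eta\,\mu^2(C)$, and $\mu^2(C)$ can be arbitrarily small since $\mu^2(B)$ is unrestricted, so there is no regime where \ref{B'3} becomes automatic. The cleanest fix is to adopt the paper's two ideas: bound the event in \ref{B'2} by the presence of some $x_i\in I_{i^*}\cup I_{j^*}$ for $i\ge 3$, and obtain \ref{B'3} by integrating \ref{B'2} rather than by a fresh union bound.
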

\begin{proof}
Let $t:=\eta^{-1}$. Using Theorem~\ref{thm: nonatomic}, 
we can partition $[0,1]$ into Borel sets $I_1,\dots, I_{t}$ with $\mu(I_i) = \eta$ for each $i\in [t]$. For all $i,j\in [t]$, let $I_{i,j}:= I_i\times I_j$. 
As $\mu^2(B) = \sum_{i,j\in [t]} \mu^2(B\cap I_{i,j})$, there exists $(i_0,j_0)\in [t]^2$ such that $\mu^2(B\cap I_{i_0,j_0}) \geq \eta^2 \mu^2(B)$.
Let $$C:= (B\cap I_{i_0,j_0}) \cup (B\cap I_{j_0, i_0}).$$

Clearly, $C$ is a symmetric Borel set that satisfies~\ref{B'1}.

Given $\mathbf{x} =(x_1,x_2)\in [0,1]^2$, we consider the following random experiment. 
We choose $x_3,\dots, x_r \in [0,1]$ independently at random with respect to the probability measure $\mu$.
Let $E$ be the event that $ |\{ ij\in \binom{[r]}{2} : (x_i,x_j) \in C\}| \geq 2 $. Note that by the definition of $C$, if the event $E$ happens then at least one of $x_3,\dots, x_r$ lies inside $I_{i_0}\cup I_{j_0}$.
Thus the probability of $E$ satisfies
\begin{eqnarray*}
\mathbb{P}[E]  
&\leq& \sum_{i= 3}^{r} \mathbb{P}\left[ x_i \in I_{i_0}\cup I_{i_{j_0}} \right] \leq 2r\eta.
\end{eqnarray*}
This implies \ref{B'2}. 

Finally, by the symmetry between the variables $x_i$, \ref{B'2} implies that
\begin{eqnarray*}\mu^r( C^{r,2}_{+} ) &\leq& \binom{r}{2} \int_{\mathbf{x} \in C} \mu^{r-2}\left( C^{r,2}_+(\mathbf{x}) \right) \prod_{i\in [2]} \dd \mu(x_{i}) 
	\le {r\choose 2} \mu^2(C)\cdot 2r\eta \leq  r^3 \eta \mu^2(C).
\end{eqnarray*}
Thus we have \ref{B'3}.
\end{proof}

\subsection{Properties of $\mathcal{H}_r$ and $h_r$}
\label{subsec: hr}

Everywhere in this section, $r\ge 3$ is fixed. In order to deal with the graphon family $\mathcal{H}_r$, it is convenient to define some related parameters. 

For $t,\ell\in\I N$ with $\ell\ge 2$ and $\gamma\in\mathbb{R}$,
define 
 \begin{align}
\kappa_{\ell,t}(\gamma)& := \ell!\,\left({t\choose \ell} \gamma^\ell+{t\choose \ell-1}\gamma^{\ell-1}(1- t\gamma)\right)\nonumber\\
 &= t^{(\ell-1)}\gamma^{\ell-1}(\ell-(\ell-1)(t+1)\gamma). \label{eq:kappa}
\end{align}

 Note that if $0\le \gamma\le \frac{1}{t}$ then  $\kappa_{\ell,t}(\gamma)$ is the asymptotic density of $\ell$-cliques in a complete partite graph with
 $t$ parts of size $\gamma n$ and one part of size $(1- t\gamma)n$ as $n\to\infty$. 
 Next, for $x\le 1-\frac{1}{t+1}$, let
\begin{align}\label{eq:gamma}
\gamma_{t}(x):=\frac{1}{t+1} + \frac{\sqrt{t(t-(t+1)x)}}{t(t+1)}.
\end{align}
 This formula comes from taking  $\gamma_{t}(x)$ to be the larger root $\gamma$ of 
the quadratic equation 
$x=\kappa_{2,t}(\gamma)$.
 Further, again for $x\le 1-\frac1{t+1}$, define
\begin{align}\nonumber
p_{r,t}(x) 
&:= \kappa_{r,t}(\gamma_{t}(x))\\
&=\ \frac{t^{(r-1)}}{t^{r}(t+1)^{r-1}}\left(t+ \sqrt{t(t -(t+1)x)}\right)^{r-1} \left(t - (r-1)\sqrt{t(t- (t+1)x)}\right).\label{eq:prt}
\end{align}

With this preparation we are ready to define the two main parameters, $k$ and $c$, associated with the edge density~$\alpha\in [0,1)$, namely
\begin{align}
\begin{split}
k&\textstyle =k(\alpha)\in\I N ~ \text{ such that }~ \alpha\in \bI_{k} \mbox{ (that is, $1-\frac{1}{k}\le \alpha<1-\frac{1}{k+1}$)},\\
c&=c(\alpha):=\textstyle \gamma_{k}(\alpha). \label{def: k}
\end{split}
\end{align}
Note that $c=c(\alpha)$ in~\eqref{def: k} is the same as in~\eqref{eq:hr} and
Definition~\ref{def: HrGk}; also,~\eqref{eq:gamma} gives an explicit formula for~$c$. 

In other words, the function $c:[0,1)\to (0,1]$ is obtained by taking $\gamma_{t}$ on the interval $\bI_t$
for $t\in\I N$. (Recall that these intervals partition~$[0,1)$.) Since  the left and right
limits of the function $c$ coincide at any internal boundary point $1-\frac{1}{t+1}$, with $t\in\I N$ (namely, both are $\frac1{t+1}$), the explicit formula in~\eqref{eq:gamma} gives that $c$ is a continuous and strictly monotone decreasing function on $[0,1)$.

Also, easy calculations based on e.g.~\eqref{eq:gamma} show that for all $\alpha\in [0,1)$ we have, with $c=c(\alpha)$ and $k=k(\alpha)$, that
 \begin{equation}\label{eq:c}
 \frac1{k+1}<c\le\frac1{k}.
 \end{equation} 
Furthermore,  $h_r(\alpha)=\kappa_{r,k}(c)=p_{r,k}(\alpha)$, where the function $h_r(\alpha)$ was defined in~\eqref{eq:hr}. 
In fact, as we will show in Lemma~\ref{lem: linear extension}, $h_r(\alpha)=\max\{p_{r,t}(\alpha)\mid t\ge k\}$, that is, $h_r$ is the maximum of those functions $p_{r,t}$ that are defined at a given point, with $p_{r,t}$ being a largest one on~$\bI_{t}$.

\hide{For 
$\alpha\in [0,1)$, define
\begin{align}
\begin{split}
k&\textstyle =k(\alpha) ~ \text{ such that }~ \alpha\in \bI_{k} \mbox{ (that is, $1-\frac1{k+1}\le \alpha<1-\frac1{k+2}$)},\\
c&\textstyle = c(\alpha) ~\text{ such that }~ \alpha 
= (k+1)c(2-(k+2)c)
 ~\text{ and }~ c \in \left(\frac{1}{k+2},\frac{1}{k+1}\right]. \label{def: k}
\end{split}
\end{align}
Note that, up to trivial simplifications, $\alpha= 2\left({k+1\choose 2}c^2+(k+1)c(1-(k+1)c)\right)$ and 
thus $c=c(\alpha)$ in~\eqref{def: k} is the same as in~\eqref{eq:hr} and
Definition~\ref{def: HrGk}.
By solving the quadratic equation, we can express $c$ in terms of $\alpha$ as follows  (where as always $k=k(\alpha)$):
\begin{align}\label{eq: c rel}
c=\frac{1}{k+2} + \frac{\sqrt{(k+1)(k+1-(k+2)\alpha)}}{(k+1)(k+2)}.
\end{align}
Also, let us recall the expression for $h_r$ from~\eqref{eq:hr} slightly simplifying it:
\begin{align}\label{eq: h def}
h_r(\alpha) 
&=(k+1)^{(r-1)}c^{r-1}(r-(r-1)(k+2)c).
\end{align}
\hide{\begin{align}\label{eq: h def}
h_r(\alpha) &= r!\left({k+1\choose r}c^r+{k+1\choose r-1}c^{r-1}(1-(k+1)c)\right)\\
&=(k+1)^{(r-1)}c^{r-1}(r-(r-1)(k+1)c).\nonumber
\end{align}
\begin{align}\label{eq: h def}
h_r(\alpha) = (k+1)^{(r-1)}c^{r-1}(r-(r-1)c)
\end{align}}


For integer $t\ge0$ and real $x<1-\frac1{t+2}$, let
\begin{align*}
p_{r,t}(x)\,:=\,& \frac{(t+1)^{(r-1)}}{(t+1)^{r}(t+2)^{r-1}}\left(t+1+ \sqrt{(t+1)(t+1 -(t+2)x)}\right)^{r-1}\nonumber \\
 &\cdot \left(t+1 - (r-1)\sqrt{(t+1)(t+1- (t+2)x)}\right).
 \end{align*}
\hide{\begin{align*}
p_{r,t}(\alpha):= &\frac{t^{(r-2)}}{(t+1)^{r-1}(t+2)^{r-1} } \cdot \left( t+1+ \sqrt{(t+1)(t+1 -(t+2)\alpha)} \right)^{r-1} \nonumber \\
 &\cdot \left(t+1 - (r-1)\sqrt{(t+1)(t+1- (t+2)\alpha)}\right).
 \end{align*}
 }
Substituting \eqref{eq: c rel} in \eqref{eq: h def}, a standard computation shows that for $k=k(\alpha)$,
\begin{align}\label{eq: h rel}
h_r(\alpha) = & p_{r,k}(\alpha).
\end{align}
}

The following lemma computes the first two derivatives of $p_{r,t}$ (and thus of $h_r$ in all interior points of each $\mathbf{I}_t$), where we write these derivatives in terms of $\gamma_{t}$ for convenience. Note that $h_r$ is not differentiable at points $1-\frac{1}{t}$ for integers $t\ge r-1$: the left and right derivatives of $h_r$ exist at these points but are different.

\begin{lemma}\label{eq: h'}
For each $t\in\I N$ and $x\le 1-\frac{1}{t+1}$, we have that
$$
p_{r,t}'(x)
= \binom{r}{2}  (t-1)^{(r-2)}\gamma_{t}(x)^{r-2} ~~ \enspace \text{ and }~~ \enspace  
p_{r,t}''(x)= \frac{ 3\binom{r}{3}  (t-1)^{(r-2)} \gamma_{t}(x)^{r-3}}{2 t(1-(t+1)\gamma_{t}(x))}.$$
\end{lemma}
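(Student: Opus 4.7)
The strategy is a direct computation via the chain rule together with implicit differentiation, exploiting the definitional identity $p_{r,t}(x) = \kappa_{r,t}(\gamma_t(x))$ and the fact that $\gamma_t(x)$ is (by construction) the larger root of $x = \kappa_{2,t}(\gamma)$. The whole calculation hinges on one algebraic miracle: a factor of $(1-(t+1)\gamma)$ appearing in $d\kappa_{r,t}/d\gamma$ cancels against an identical factor coming from $dx/d\gamma$, which is exactly why the answer is as clean as it is.

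First I would compute $d\kappa_{r,t}/d\gamma$ starting from the factored form \eqref{eq:kappa}, namely $\kappa_{r,t}(\gamma) = t^{(r-1)}(r\gamma^{r-1} - (r-1)(t+1)\gamma^r)$. Differentiating and re-factoring gives
\[
\frac{d\kappa_{r,t}}{d\gamma} \;=\; r(r-1)\,t^{(r-1)}\,\gamma^{r-2}\,\bigl(1-(t+1)\gamma\bigr).
\]
Next, from $x = \kappa_{2,t}(\gamma) = t\gamma(2-(t+1)\gamma) = 2t\gamma - t(t+1)\gamma^2$ I get $dx/d\gamma = 2t(1-(t+1)\gamma)$, and hence, by the inverse function rule,
\[
\gamma_t'(x) \;=\; \frac{1}{2t\bigl(1-(t+1)\gamma_t(x)\bigr)}.
\]
The chain rule $p_{r,t}'(x) = (d\kappa_{r,t}/d\gamma)\cdot \gamma_t'(x)$ then causes the factor $(1-(t+1)\gamma)$ to cancel, leaving
\[
p_{r,t}'(x) \;=\; \frac{r(r-1)}{2}\cdot \frac{t^{(r-1)}}{t}\,\gamma_t(x)^{r-2} \;=\; \binom{r}{2}\,(t-1)^{(r-2)}\,\gamma_t(x)^{r-2},
\]
using the identity $t^{(r-1)}/t = (t-1)^{(r-2)}$, which is the first claimed formula.

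For the second derivative I differentiate the expression just obtained: since $\binom{r}{2}(t-1)^{(r-2)}$ is a constant in $x$, a single further application of the chain rule gives
\[
p_{r,t}''(x) \;=\; \binom{r}{2}(r-2)(t-1)^{(r-2)}\,\gamma_t(x)^{r-3}\,\gamma_t'(x),
\]
and substituting the formula for $\gamma_t'(x)$ together with the numerical identity $\binom{r}{2}(r-2)/2 = 3\binom{r}{3}/2$ yields the second claimed expression. The only mild subtlety is the right endpoint $x = 1 - \frac{1}{t+1}$, where $\gamma_t(x) = \frac{1}{t+1}$ so the factor $1-(t+1)\gamma_t(x)$ in the denominator of $p_{r,t}''$ vanishes; the first-derivative formula remains valid there by continuity, and the second-derivative formula should be understood as a one-sided derivative on the open subinterval. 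There is no substantive obstacle in the argument beyond bookkeeping with falling factorials and binomial coefficients.
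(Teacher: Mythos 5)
Your proof is correct and takes essentially the same approach as the paper: the paper phrases the cancellation as the Implicit Function Theorem identity $p_{r,t}'(x)=\kappa_{r,t}'(\gamma)/\kappa_{2,t}'(\gamma)$, which is just the chain-rule-plus-inverse-function-rule computation you carry out. Your extra remark about the one-sided interpretation of $p_{r,t}''$ at the endpoint $x=1-\frac{1}{t+1}$ is a harmless bonus that the paper omits.
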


\begin{proof}
It is easy to calculate that
$\kappa_{2,t}'(\gamma)=2t(1-(t+1)\gamma)$. Denote $\gamma=\gamma_{t}(x)$. Since $x=\kappa_{2,t}(\gamma)$, we  derive from \eqref{eq:kappa} by the Implicit Function Theorem 
that
\begin{eqnarray*}
p_{r,t}'(x)
= \frac{\kappa_{r,t}'(\gamma)}{\kappa_{2,t}'(\gamma)}
=
\frac{t^{(r-1)} r(r-1) \gamma^{r-2}(1-(t+1)\gamma)}{2t(1-(t+1)\gamma)}
\ =\  \binom{r}{2}  (t-1)^{(r-2)} \gamma^{r-2}.
\end{eqnarray*}
Likewise, $p_{r,t}''(x)=\frac{\dd}{\dd \gamma}\left(\binom{r}{2}  (t-1)^{(r-2)}\gamma^{r-2}\right)/\kappa_{2,t}'(\gamma)$, giving the stated formula.
\end{proof}

An informal  explanation of the above formula for $h_r'(\alpha)$ is that this derivative measures the increament in the number of $r$-cliques in $H_{\alpha,n}$, normalised by ${n\choose 2}/{n\choose r}$, when we increase $\alpha$ as $n\to\infty$. When we add $\lambda=o(n^2)$ new edges between the last two parts, we create around $\lambda {k-1\choose r-2}(cn)^{r-2}$ copies of $K_r$ while the change in the ratio $c$ has negligible effect because the (optimal) vector of part ratios is critical. Now note that ${k-1\choose r-2}(cn)^{r-2}\cdot{n\choose 2}/{n\choose r}=\binom{r}{2}  (k-1)^{(r-2)}c^{r-2}+o(1)$.

The following lemma directly follows from the previous lemma and Taylor's approximation. 

\begin{lemma}\label{lem: Taylor} Let $\alpha\in [0,1)$. Let
$k\in \mathbb{N}$ and $c$ be as in \eqref{def: k}. If $\alpha\not=1-\frac{1}{k}$
(that is, $\alpha$ is in the interior of $\bI_k$), then there is $\epsilon>0$ such that for each $\alpha' =\alpha\pm \epsilon$, we have
$\alpha'\in \mathbf{I}_k$ and
$$h_r(\alpha') = h_r(\alpha)+ \binom{r}{2}  (k-1)^{(r-2)}c^{r-2} (\alpha'-\alpha) \pm |\alpha'-\alpha|^{3/2}.\qed$$
\end{lemma}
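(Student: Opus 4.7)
The plan is to derive the estimate directly from the Taylor expansion of $h_r$ around $\alpha$, using that $h_r$ agrees with a smooth function $p_{r,k}$ on the interior of $\mathbf{I}_k$ and that Lemma~\ref{eq: h'} gives explicit formulas for $p_{r,k}'$ and $p_{r,k}''$.

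First, since $\alpha \in \mathbf{I}_k \setminus \{1-\tfrac{1}{k}\}$, we have $\alpha \in (1-\tfrac{1}{k},\, 1-\tfrac{1}{k+1})$, which is open, so I can choose $\epsilon_0 > 0$ small enough that $[\alpha - \epsilon_0,\, \alpha + \epsilon_0] \subseteq (1-\tfrac{1}{k},\, 1-\tfrac{1}{k+1}) \subseteq \mathbf{I}_k$. For every $\alpha'$ in this interval we then have $k(\alpha')=k$, and by the discussion immediately after \eqref{def: k} (i.e.\ the identity $h_r(\alpha') = \kappa_{r,k}(c(\alpha')) = p_{r,k}(\alpha')$) the functions $h_r$ and $p_{r,k}$ coincide there.

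Next, by Lemma~\ref{eq: h'}, $p_{r,k}$ is twice continuously differentiable on the interval $[\alpha-\epsilon_0,\alpha+\epsilon_0]$: the formula
$$p_{r,k}''(x) = \frac{3\binom{r}{3}(k-1)^{(r-2)}\gamma_k(x)^{r-3}}{2k(1-(k+1)\gamma_k(x))}$$
is well-defined and continuous in $x$ on this interval, because for every such $x$ we have $\gamma_k(x) \in (\tfrac{1}{k+1}, \tfrac{1}{k})$ by \eqref{eq:c}, so the factor $1-(k+1)\gamma_k(x)$ stays bounded away from $0$. Hence $p_{r,k}''$ is bounded in absolute value by some constant $M = M(r,k,\alpha,\epsilon_0)$ on $[\alpha-\epsilon_0,\alpha+\epsilon_0]$.

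Now apply Taylor's theorem with Lagrange remainder: for every $\alpha' \in [\alpha-\epsilon_0,\alpha+\epsilon_0]$ there exists $\xi$ between $\alpha$ and $\alpha'$ such that
$$h_r(\alpha') \,=\, p_{r,k}(\alpha) + p_{r,k}'(\alpha)(\alpha'-\alpha) + \tfrac{1}{2}\,p_{r,k}''(\xi)(\alpha'-\alpha)^2.$$
Using the value of $p_{r,k}'(\alpha) = \binom{r}{2}(k-1)^{(r-2)}c^{r-2}$ from Lemma~\ref{eq: h'} together with $c=\gamma_k(\alpha)$, and bounding the remainder by $\tfrac{M}{2}(\alpha'-\alpha)^2$, it remains only to absorb the quadratic remainder into $|\alpha'-\alpha|^{3/2}$. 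This is immediate once $\epsilon \le \epsilon_0$ is small enough that $\tfrac{M}{2}\epsilon^{1/2} \le 1$, since then
$$\tfrac{M}{2}(\alpha'-\alpha)^2 \,=\, \tfrac{M}{2}|\alpha'-\alpha|^{1/2}\cdot|\alpha'-\alpha|^{3/2} \,\le\, |\alpha'-\alpha|^{3/2}.$$
This choice of $\epsilon$ delivers the stated estimate. There is no real obstacle here; the only care needed is in checking that the second derivative really is bounded on a neighborhood of $\alpha$, which is exactly why one must exclude the left endpoint $\alpha = 1 - \tfrac{1}{k}$ of $\mathbf{I}_k$ (and why the right endpoint $1-\tfrac{1}{k+1}$ is already excluded since $\mathbf{I}_k$ is half-open at that end).
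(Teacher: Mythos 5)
Your proof is correct and fills in exactly the calculation the paper omits: the paper states that the lemma "directly follows from the previous lemma and Taylor's approximation" with no further argument, and your Taylor-with-Lagrange-remainder derivation, using Lemma~\ref{eq: h'} and the identity $h_r = p_{r,k}$ on $\mathbf{I}_k$, is the intended expansion. One small inaccuracy in your closing remark: the second derivative $p_{r,k}''$ is in fact finite at the left endpoint $1-\frac1k$ (where $\gamma_k = \frac1k$, so $1-(k+1)\gamma_k = -\frac1k \ne 0$); it is only near the \emph{right} endpoint $1-\frac1{k+1}$, where $\gamma_k \to \frac1{k+1}$, that $p_{r,k}''$ blows up. The real reason the left endpoint must be excluded is not the second-derivative bound but that for $\alpha' < 1-\frac1k$ one has $k(\alpha') = k-1$, so $h_r$ is given by $p_{r,k-1}$ rather than $p_{r,k}$ there, and the two-sided Taylor expansion of $p_{r,k}$ would no longer describe $h_r$.
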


\hide{
We remark that in the above lemma, as $\epsilon$ is sufficiently small compared to $\alpha$ and $1/k$ ($\epsilon \ll \alpha,1/k$), $\epsilon < \min\{ \alpha-(1-\frac{1}{k+1}), 1-\frac{1}{k+2} -\alpha\}$, hence $\alpha'\in \mathbf{I}_k$. 
}

The following lemma proves Theorem~\ref{thm: main} for the special edge densities where the function $h_r$ is not differentiable.
\hide{
\begin{lemma}[\cite{LovaszSimonovits83}]\label{lem: LS stability}
Suppose $0< 1/n \ll \eta \ll \epsilon \leq 1$.
If an $n$-vertex graph $G$ satisfies 
$$t(K_2,G) =  \left(1-\frac{1}{t}\right) \pm \eta \quad \text{and} \quad
t(K_r, G) = h_r\left(1- \frac{1}{t}\right) \pm \eta,$$
then we have $|G \triangle T_t(n)| < \epsilon n^2.$
\end{lemma}
}

\begin{lemma}\label{lem: at boundary}
Let $t\ge r-1$ be integer, $\alpha=1-\frac1t$, and let $W$ be a graphon with $t(K_2,W) = \alpha$. If $ t(K_r,W) = h_r(\alpha)$, then $W\in [W_{K_t}]$. 
\end{lemma}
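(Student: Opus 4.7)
The plan is to reduce the graphon statement to its graph analogue by approximation, and then to invoke the Lovász-Simonovits stability theorem at the Turán edge-density $\alpha=1-\frac{1}{t}$. Since finite graphs are dense in the graphon space $\mathcal{W}$ (as noted after the introduction of $\mathcal{W}$ in Section~\ref{sec-graphon-extremal-structure}), there is a sequence of graphs $(G_n)_{n\ge 1}$ with $|V(G_n)|\to\infty$ such that $W_{G_n}\to W$. By the continuity of homomorphism densities under graph convergence, $t(K_2,G_n)\to 1-\frac{1}{t}$ and $t(K_r,G_n)\to h_r(1-\frac{1}{t})$.

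Fix an arbitrary $\epsilon'>0$. The stability version of the Lovász-Simonovits theorem \cite{LovaszSimonovits83} at a Turán density states: there exist $\eta>0$ and $n_0$ such that every $n$-vertex graph $H$ with $n\ge n_0$, $t(K_2,H)=(1-\frac{1}{t})\pm\eta$, and $t(K_r,H)=h_r(1-\frac{1}{t})\pm\eta$ satisfies $|H\triangle T_t(n)|\le\epsilon'n^2$. Applying this to our approximating sequence, for all sufficiently large $n$ we may relabel $G_n$ so that $|E(G_n)\triangle E(T_t(|V(G_n)|))|\le\epsilon'|V(G_n)|^2$. A standard counting argument then gives, for each fixed graph $F$ on $v$ vertices, $|t(F,G_n)-t(F,T_t(|V(G_n)|))|\le v^2\epsilon'$, since flipping one edge alters the number of $F$-homomorphisms by at most $v(v-1)|V(G_n)|^{v-2}$.

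Since $T_t(n)\to W_{K_t}$ (the complete $t$-partite graphon with parts of equal measure $1/t$), we have $t(F,T_t(|V(G_n)|))\to t(F,W_{K_t})$ for every $F$. Combined with $t(F,G_n)\to t(F,W)$, passing to the limit yields $|t(F,W)-t(F,W_{K_t})|\le v^2\epsilon'$. As $\epsilon'$ was arbitrary, $t(F,W)=t(F,W_{K_t})$ for every graph $F$, and therefore $[W]=[W_{K_t}]$ by the definition of weak isomorphism.

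The essential ingredient is the Lovász-Simonovits stability theorem at the cusp densities $\alpha=1-\frac{1}{t}$; granted this, the transfer from graphs to graphons is routine, using only the density of finite graphs in $\mathcal{W}$ and the standard continuity estimate for $t(F,\cdot)$ under small edit perturbations. There is no genuine obstacle beyond invoking this stability result.
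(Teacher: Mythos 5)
Your proposal is correct and takes essentially the same route as the paper: approximate $W$ by a convergent sequence of graphs, apply the Lov\'asz--Simonovits stability theorem~\cite[Theorem~2]{LovaszSimonovits83} at the cusp density $1-\frac1t$ to get each $G_n$ within $o(n^2)$ edits of $T_t(n)$, and pass to the limit to conclude $[W]=[W_{K_t}]$. The paper's version is more compressed (it notes that the $o(n^2)$ edit does not change the graphon limit and that Tur\'an graphs converge to $W_{K_t}$), but the content is identical to your $\epsilon'$-bookkeeping.
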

\begin{proof}
The quickest way to prove the lemma is to use the weaker version of a result of Lov\'asz and Simonovits~\cite[Theorem~2]{LovaszSimonovits83} that every graph of order $n\to\infty$ with $(\alpha+o(1)){n\choose 2}$ edges and $(h_r(\alpha)+o(1)){n\choose r}$ copies of $K_r$ is $o(n^2)$-close in edit distance to the Tur\'an graph~$T_t(n)$. Applying this result to a sequence of graphs $(G_n)_{n=1}^\infty$, where  $G_n$ has $n$ vertices, that converges to the graphon $W$, we can transform each $G_n$ into $T_t(n)$ by changing $o(n^2)$ adjacencies. This change does not affect the convergence to $W$. Now, the limit of the $t$-partite Tur\'an graphs is clearly $[W_{K_t}]$, giving the required.\end{proof}

\begin{remark}\rm Alternatively, one can prove Lemma~\ref{lem: at boundary} operating  with graphons only. Namely, the proof of Lov\'asz and Simonovits~\cite[Theorems~1--2]{LovaszSimonovits83} for graphons would be to write $t(K_r,W)/t(K_2,W)$ as a telescopic product  over $3\le s\le r$ of $t(K_s,W)/t(K_{s-1},W)$ and bound each ratio separately, using the Cauchy-Schwartz Inequality (with double counting replaced by Tonelli's theorem).
In particular, since $t(K_r,W)$ is smallest possible, the graphon $W$ also minimises the triangle density for the given edge density $\alpha=1-\frac1t$. By unfolding the corresponding argument from~\cite{LovaszSimonovits83}, one can show that the induced density of 3-sets spanning exactly one edge is 0. It follows with a bit of work that, similarly to graphs, $W$ is a complete partite graphon a.e. Now, a routine optimisation (see e.g.~\cite[Theorem~1.3]{Nikiforov11}) shows that, apart a null-set, there are exactly $t$ parts of equal measure.%
	\end{remark}
\hide{Let $\epsilon_1, \dots$ and
$\eta_1, \dots $ be decreasing sequences converging to $0$
and  $n_1, \dots$ be an increasing sequence satisfying $1/n_i \ll \eta_i \ll \epsilon_i $ for each $i\in \mathbb{N}$. Consider the sequence $G_1, G_2,\dots$ of $n_i$-vertex graphs $G_i$ with $$t(K_2,G_i) = \left(1-\frac{1}{t}\right)\pm \eta_i \enspace \text{ and }\enspace
t(K_r,G_i) = h_r\left(1- \frac{1}{t}\right) \pm \eta_i.$$
Such a sequence exists, for example, since $1/n_i \ll \eta_i$, the sequence of random $W$-graphs $\mathbb{G}(n_i,W)$ with $i\in\mathbb{N}$, has this property almost surely, see~\cite[Corollary~2.6]{LovaszSzegedy06}). 

By Lemma~\ref{lem: LS stability}, for each $i\in \mathbb{N}$, we have
$|G_i \triangle  T_t(n_i)| \leq \epsilon_i n_i^2$.
Hence, for every graph $F$, we have 
$t(F, G_{i}) = t(F,T_{t}(n_i)) \pm \epsilon_i|F|!$. 
As  $\epsilon_i$ approaches to zero, we have
$$\lim_{i \to\infty} t(F,G_{i})=\lim_{i \to\infty} t(F, T_t(n_i))=t(F,W_{K_t}),$$ as desired.
}

We will also need the following result, which is essentially a consequence of the piecewise concavity of the function $h_r$.

\begin{lemma}\label{lem: linear extension}
For every $t\in\I N$ and $\alpha \in [0,1-\frac{1}{t})$, we have that  $h_r(\alpha) \geq p_{r,t}(\alpha)$.
\end{lemma}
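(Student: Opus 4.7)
The plan is to reduce the claim to a single-step comparison by telescoping and then verify the comparison through boundary analysis together with an algebraic check. First dispose of two easy cases. If $t\le r-2$ then $t^{(r-1)}=0$ by definition of the falling power, so $p_{r,t}\equiv 0\le h_r(\alpha)$; so I may assume $t\ge r-1$. If $\alpha\le 1-\frac{1}{r-1}$ then $h_r(\alpha)=0$ and it suffices to show $p_{r,t}(\alpha)\le 0$; by \eqref{eq:kappa} this reduces to $(t+1)\gamma_t(\alpha)\ge\frac{r}{r-1}$. Since $\gamma_t$ is non-increasing on its domain, it is enough to verify this at $\alpha=1-\frac{1}{r-1}$, where \eqref{eq:gamma} gives $(t+1)\gamma_t=1+\frac{1}{t}\sqrt{t(t-r+2)/(r-1)}$; the desired bound then follows from the arithmetic inequality $(t-r+2)(r-1)\ge t$, valid for $t\ge r-1$.

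For the main range $\alpha\in\bigl(1-\frac{1}{r-1},1-\frac{1}{t}\bigr)$, set $k:=k(\alpha)\in[r-1,t-1]$ so that $h_r(\alpha)=p_{r,k}(\alpha)$. The plan is to prove the stronger telescoping inequality
$$p_{r,s-1}(\alpha)\;\ge\; p_{r,s}(\alpha)\quad\text{for all integers }s\text{ with }k< s\le t\text{ and all }\alpha\in\bigl[0,1-\tfrac{1}{s}\bigr].$$
Chaining this for $s=k+1,\dots,t$ then yields $p_{r,t}(\alpha)\le p_{r,t-1}(\alpha)\le\cdots\le p_{r,k}(\alpha)=h_r(\alpha)$, as required.

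To establish $\psi(\alpha):=p_{r,s-1}(\alpha)-p_{r,s}(\alpha)\ge 0$ on $[0,\alpha_0]$ with $\alpha_0:=1-\frac{1}{s}$, I would first handle the right endpoint. By \eqref{eq:gamma} one has $\gamma_{s-1}(\alpha_0)=\gamma_s(\alpha_0)=\frac{1}{s}$, and substituting into \eqref{eq:kappa} both $p_{r,s-1}(\alpha_0)$ and $p_{r,s}(\alpha_0)$ reduce to the same value $s^{(r)}/s^r$ (the $K_r$-density of the Tur\'an graph~$T_s$); thus $\psi(\alpha_0)=0$. Lemma~\ref{eq: h'} then gives
$$\psi'(\alpha_0)\;=\;\binom{r}{2}\bigl(\tfrac{1}{s}\bigr)^{r-2}\bigl((s-2)^{(r-2)}-(s-1)^{(r-2)}\bigr)\;<\;0,$$
so $\psi>0$ in a left neighbourhood of $\alpha_0$.

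The main obstacle is propagating this positivity to the whole interval $[0,\alpha_0]$, and my plan is a direct algebraic verification via the explicit formula \eqref{eq:prt}. Writing $A_s:=\sqrt{s(s-(s+1)\alpha)}$ and $A_{s-1}:=\sqrt{(s-1)(s-1-s\alpha)}$, a short computation gives the identity $(s-1)A_s^2-sA_{s-1}^2=s(s-1)(1-\alpha)$, which lets me express $A_{s-1}^2$ in terms of $A_s^2$ and $\alpha$. Substituting into \eqref{eq:prt} reduces $\psi\ge 0$ to a polynomial inequality in $A_{s-1},A_s,\alpha$; the endpoint $\alpha=0$ follows from the AM-GM bound $(s+1)^{r-1}(s-r+1)\le s^r$ (obtained by applying AM-GM to $r-1$ copies of $s+1$ and one copy of $s-r+1$, whose arithmetic mean equals $s$). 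Combined with the concavity of both $p_{r,s-1}$ and $p_{r,s}$ on their domains (Lemma~\ref{eq: h'}) and the local positivity of $\psi$ near $\alpha_0$ established above, this suffices to rule out any interior zero of $\psi$, completing the single-step inequality and hence the lemma.
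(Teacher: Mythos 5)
Your telescoping strategy — reducing the claim to $p_{r,s-1}(\alpha) \ge p_{r,s}(\alpha)$ on $[0, 1-\tfrac1s]$ for each $s$ with $k(\alpha)<s\le t$ — is a genuinely different route from the paper, and the preliminary reductions and endpoint computations are all correct: the cases $t\le r-2$ and $\alpha\le 1-\tfrac1{r-1}$, the equality $\psi(\alpha_0)=0$ at $\alpha_0=1-\tfrac1s$, the sign of $\psi'(\alpha_0)$, and the AM-GM verification of $\psi(0)\ge 0$ all check out. However, the final step has a genuine gap. Concavity of $p_{r,s-1}$ and of $p_{r,s}$ says nothing useful about the shape of their \emph{difference} $\psi$: a difference of two concave functions is neither concave nor convex in general, and can easily be non-negative at both endpoints, strictly positive just inside one endpoint, and yet dip below zero in the middle (take two concave functions agreeing at both endpoints whose difference is a sine-like wiggle). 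So ``$\psi(0)\ge 0$, $\psi(\alpha_0)=0$, $\psi>0$ near $\alpha_0$, plus concavity of each $p_{r,\cdot}$'' does not rule out interior zeros, and the promised ``polynomial inequality in $A_{s-1},A_s,\alpha$'' is announced but never produced.

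For contrast, the paper's argument sidesteps exactly this difficulty by never comparing two curved functions directly. It inserts the tangent line $L_{r,t}$ of $p_{r,t}$ at $x_0=1-\tfrac1t$ between the two sides: concavity of $p_{r,t}$ (a single concave function versus a line) gives $p_{r,t}\le L_{r,t}$, and then $h_r\ge L_{r,t}$ on $[0,x_0]$ follows via the Mean Value Theorem once one checks $h_r'(x)\le p_{r,t}'(x_0)$, which reduces (using Lemma~\ref{eq: h'} and the fact that $p_{r,s}'$ is decreasing on $\bI_s$) to the clean product inequality $\prod_{i=1}^{r-2}\frac{t(s-i)}{s(t-i)}\le 1$. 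If you want to rescue your telescoping route, the missing ingredient is to show that $\psi'\le 0$ throughout $[0,\alpha_0]$, which amounts to proving $\frac{(s-2)^{(r-2)}}{(s-1)^{(r-2)}}\le\big(\gamma_s(x)/\gamma_{s-1}(x)\big)^{r-2}$ for all $x\le\alpha_0$; this needs a monotonicity argument for the ratio $\gamma_s/\gamma_{s-1}$, not just the endpoint evaluations you gave.
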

\begin{proof}
Let $x_0:=1-\frac{1}{t}$ and define 
$$L_{r,t}(x):=p_{r,t}\left(x_0\right) +  p'_{r,t}(x_0)\left(x-x_0\right),\quad\mbox{for $x\in\I R$}.
$$ 
In other words, $y= L_{r,t}(x)$ is the line tangent to the curve $y= p_{r,t}(x)$ at $x= x_0$.
Since $\gamma_{t}(x)\ge \frac1t> \frac{1}{t+1}$ for $x\le x_0$ by~\eqref{eq:gamma}, Lemma~\ref{eq:  h'}  gives that the function $p_{r,t}$ has the negative second derivative and is thus  concave. 
We conclude that
$p_{r,t}(x) \leq L_{r,t}(x)$ for all $x\leq  x_0$.

Thus we are done if we show that $h_r(x)\ge L_{r,t}(x)$ for all $x\in[0,x_0]$. Note 
that $h_r(x_0)=L_{r,t}(x_0)$.
Since $h_r$ is a continuos function which is differentiable for every $x\in [0,x_0]$ apart finitely many points, it is enough to show by the Mean Value Theorem that $h_r'(x)\le p'_{r,t}(x_0)$ for each $x\in [0, x_0]$ where $h_r$ is differentiable. So,
let $x\in \bI_s$ with $0\le s< t$. Since $h_r=p_{r,s}$ on $\bI_s$ and, by Lemma~\ref{eq:  h'}, the derivative $p_{r,s}'$ is a decreasing function, it is enough to check that $p_{r,s}'(1-\frac1{s})\le p_{r,t}'(1-\frac1{t})$.
Note that $\gamma_{m}(1-\frac1{m})=\frac1{m}$ for each $m\in\I N$. If $s\ge r-2$, then by Lemma~\ref{eq:  h'} we have that
$$
\frac{p_{r,s}'(1-\frac 1{s})}{p_{r,t}'(1-\frac 1{t})}
=\frac{(s-1)^{(r-2)}(\frac1{s})^{r-2}}{(t-1)^{(r-2)}(\frac1{t})^{r-2}}=\prod_{i=1}^{r-2} \frac{t(s-i)}{s(t-i)} \leq 1
$$
because  $t(s-i)- s(t-i)=i(s-t)\le 0$. If $s\le r-2$, then $p_{r,s}'(1-\frac1{s})=0$ while $p_{r,t}'(x_0)\ge 0$, also giving the desired inequality.
\end{proof}

\begin{lemma}\label{lm:ConvHr}  
Every graphon $(W,\mu)$ in $\C H_r$ is the limit of some sequence $(H_n)_{n=1}^\infty$ where $H_n\in\C H_{r,n}$ for each integer $n\ge1$.
Also, for all integers $n_1<n_2<\ldots$ and graphs $H_i\in \C H_{r,n_i}$ such that the sequence $(H_i)_{i=1}^\infty$ converges, its limit is in $\C H_r$.
\end{lemma}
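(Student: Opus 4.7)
The plan is to prove both inclusions. For the forward direction I would handle the three subfamilies of $\mathcal{H}_r$ separately. If $W=\mathbf{1}_{[0,1]^2}$, take $H_n:=K_n\in\mathcal{H}_{1,n}\subseteq\mathcal{H}_{r,n}$. If $[W]\in\mathcal{F}$, so that $t(K_r,W)=0$, consider the $W$-random graph obtained by sampling $x_1,\dots,x_n$ independently from $\mu$ and including each edge $ij$ with probability $W(x_i,x_j)$. The nonnegative integrand $\prod_{1\le i<j\le r}W(x_i,x_j)$ has integral $t(K_r,W)=0$ and hence vanishes $\mu^r$-a.e., so any fixed $r$-tuple spans a $K_r$ with probability zero; a union bound shows the sampled graph is $K_r$-free almost surely, while standard sampling concentration yields almost sure convergence to $W$, so a generic realisation works.

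If $[W]\in\mathcal{G}_k$ with parameters $c$, $b:=1-(k-1)c$, $\rho:=2c(b-c)/b^2$, and $T:=W[\Omega_k]$, I would partition $[n]$ into blocks $V_1^n,\dots,V_k^n$ of size $\lfloor cn\rfloor$ and $V_{k+1}^n$ of size $n-k\lfloor cn\rfloor$, set $U^n:=V_k^n\cup V_{k+1}^n$, place all edges between distinct blocks of $V_1^n,\dots,V_{k-1}^n,U^n$, and place on $U^n$ a triangle-free graph with exactly $|V_k^n|\cdot|V_{k+1}^n|$ edges converging to $T$ (rescaled to a probability measure on $\Omega_k$). The delicate step here is the exact edge count: sampling a triangle-free graph from $T$ puts the edge count within $o(n^2)$ of the target and surplus edges can be trimmed without creating triangles, but a deficit requires adding edges while remaining triangle-free. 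I would handle this either by slight upward biasing so that the surplus case holds a.s., or by setting aside $o(n)$ vertices in $U^n$ that are isolated within the sampled triangle-free subgraph and then attaching each such vertex to an independent set of the sample, which contributes triangle-free edges one vertex at a time.

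For the converse direction, let $H_i\in\mathcal{H}_{r,n_i}$ with $H_i\to W$; passing to subsequences, we may assume either every $H_i$ is $K_r$-free or every $H_i\in\mathcal{H}_{\alpha_i,n_i}$. In the former case, continuity of $t(K_r,\cdot)$ gives $t(K_r,W)=0$, so $[W]\in\mathcal{F}$. In the latter, $\alpha_i\to\alpha:=t(K_2,W)$. If $\alpha=1$ then $W=1$ a.e., so $[W]=[\mathbf{1}_{[0,1]^2}]$; otherwise $\alpha\in\mathbf{I}_k$ for a unique $k\in\mathbb{N}$. When $k\le r-2$, every $K_r$ in $H_i$ would require at least three mutually adjacent vertices inside the triangle-free last block, which is impossible, so $H_i$ is $K_r$-free and we reduce to the previous case. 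When $k\ge r-1$, I would choose graphon representatives of $H_i$ in which the blocks $V_j^i$ correspond to consecutive intervals of $[0,1]$ with endpoints converging to $c,2c,\dots,(k-1)c$; then each $W_{H_i}$ equals $1$ identically on all cross-block rectangles and is triangle-free with edge density converging to $\rho$ on the last block, so the limit $[W]$ lies in $\mathcal{G}_k$. The boundary case $\alpha=1-1/k$ forces $b=c$ and $\rho=0$, collapsing $T$ to the empty graphon and giving $[W]=[W_{K_k}]$.

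The hardest step in this plan is the exact edge-count matching in the $\mathcal{G}_k$ forward construction, since supplementing a triangle-free graph with additional edges while preserving triangle-freeness is not automatic and requires the auxiliary isolated-vertex trick above; all remaining steps reduce to standard sampling concentration and compactness of the graphon space.
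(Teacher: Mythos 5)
Your plan follows the same overall route as the paper: sample a $W$-random graph in the forward direction (with corrections to hit exact part sizes and edge counts), and in the converse pass to subsequences, use compactness, and analyze the block structure. The one place where your account is materially hazier is the step you yourself flag as hardest, the exact edge-count adjustment in the $\mathcal{G}_k$ forward case. The paper rounds the parts of a whole-graph sample $G_n\sim\random nW$, cleans up the $o(n^3)$ triangles created in the last block by the rebalancing via the Triangle Removal Lemma, and then cites Lemma~2.2 of Pikhurko--Razborov, which packages exactly the needed fact: a triangle-free $m$-vertex graph is $o(m^2)$-close in edit distance to a triangle-free graph with any prescribed nearby edge count (up to $t_2(m)$). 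Your ``isolated-vertex trick'' has a gap as written: a sample of a graphon with positive edge density $\rho>0$ has minimum degree $\Omega(n)$ with high probability, so the $o(n)$ isolated vertices of $U^n$ you want to attach simply will not exist. You would instead have to first delete all edges at $o(n)$ chosen vertices (an acceptable $o(n^2)$ change, or equivalently withhold them from the sample), then argue that independent sets of size $\Omega(n)$ exist (true when $\rho>0$), and that the deficit (which is $O(n^{3/2}\log n)$, not merely $o(n^2)$) can be absorbed; the ``upward biasing'' alternative is too vague to assess. All of this is precisely what the cited PR17 lemma does, so invoking it directly is cleaner and brings your argument into full agreement with the paper's; otherwise your proposal is sound in outline.
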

\begin{proof} Assume that $\alpha:=t(K_2,W)<1$ (as otherwise we can take $H_n$ to be the complete graph) and that $t(K_r,W)>0$ (as $\C H_{r,n}$ contains all $K_r$-free graphs of order~$n$).
Let $\Omega_1\cup\dots \cup \Omega_{k}$ be the partition of the underlying space $[0,1]$ for the graphon $W$, as in Definition~\ref{def:  HrGk}.

For each $n\ge 1$, let $G_n\sim\random nW$ be a graph on $[n]$ which is an \emph{$n$-vertex sample} of $W$, that is, we pick $n$  points $x_{n,1},\dots,x_{n,n}\in [0,1]$ using the probability measure $\mu$ and make $i,j\in [n]$ adjacent with probability $W(x_i,x_j)$, with all choices being independent. Then  the sequence $G_n$ converges to $W$ with probability $1$, see Lov\'asz and Szegedy \cite[Corollary~2.6]{LovaszSzegedy06}. 
Each graph $G_n$ comes with a vertex partition $V_{n,1},\dots,V_{n,k}$, where we put $i\in V(G_n)$ into $V_{n,j}$ if $x_{n,i}\in \Omega_j$. By the Chernoff Bound, we have  that  $|V_{n,j}|/n$ converges to $\mu(\Omega_j)=c$ for every $j\in [k]$ as $n\to\infty$,  with probability $1$.
Since $W$ is an (explicit) $\{0,1\}$-valued function, we know all edges of $G_n$ apart from the ones inside $V_{n,k}$.
Using that $\lim_{n\to\infty}t(K_s,G_n)= t(K_s,W)$ in the special cases $s=2,3$, we derive that $V_{n,k}$ induces $o(n^3)$ triangles in $G_n$ as well as the asymptotically correct number of edges. Fix a sequence $(G_n)_{n=1}^\infty$ that satisfies all above properties.

Now we are ready to show that the edit distance between $G_n$ and some graph in $\C H_{\alpha,n}$ is $o(n^2)$, which will be enough to prove the first part of the lemma. For each $n$, move $o(n)$ vertices between the parts of $G_n$ so that $|V_{n,i}|=\lfloor cn\rfloor$ for each $i\in [k-1]$. (The new adjacencies of a moved vertex are determined by its new part, except if we move a vertex into $V_{n,k}$ we make it adjacent to e.g.\ every other vertex for definiteness.) The new graphs $G_n$ still satisfy the above properties and have the correct part sizes. Using the Triangle Removal Lemma~\cite{RuzsaSzemeredi78,ErdosFranklRodl86} (see
e.g.~\cite[Theorem~2.9]{KomlosSimonovits96}), we make $G_n[V_{n,k}]$ triangle-free by changing $o(n^2)$ adjacencies. The definition of $\C H_{\alpha,n}$ requires to have exactly $\lfloor cn\rfloor\cdot (|V_{n, k}|-\lfloor cn\rfloor)$ edges in~$V_{n,k}$. This can be achieved by~\cite[Lemma~2.2]{PikhurkoRazborov17} which states that if $G$ is triangle-free graph with $m\to\infty$ vertices and $s=e(G)+o(m^2)$ is at most $t_2(m)$, then $G$ is $o(m^2)$-close in edit distance to a triangle-free graph with exactly $s$ edges, as desired.

Let us now show the second part of the lemma. Assume that a sequence $(H_i)_{i=1}^\infty$ contradicts the statement. 
As $\C H_r$ contains the constant-1 graphon, the limiting density  $\alpha:=\lim_{i\to\infty} t(K_2,H_i)$ must be in $[0,1)$. Also, $\lim_{i\to\infty} t(K_r,H_i)>0$ since $\C H_r$ contains all graphons with zero $K_r$-density.

Let $V(H_i)=V_{i,1}\cup \dots\cup V_{i,k-1}\cup U_{i}$ be the partition from the definition of $H_i$. Let $F_i:=H_i[U_i]$. By the compactness of $\C W$, some subsequence of $(F_i)_{i=1}^\infty$ converges to some graphon~$W'$. The limiting graphon $W'$ has zero triangle density.
Since we know all edges of $H_i$ except inside $U_i$, the graphon $W'$ has the correct edge density. Now, define $W\in\C H_r$ 
as in Definition~\ref{def:  HrGk} with $c=c(\alpha)$, $k=k(\alpha)$, and $W[\Omega_{k}]$ being weakly isomorphic to~$W'$. Since we know all adjacencies except inside $\Omega_{k}$, a routine calculation shows that $H_i$ converges to $W$, as required.\end{proof}

\subsection{Asymptotic structure from extremal graphons}\label{sec-graphon-to-graph}

We are ready to show that Theorem~\ref{thm:  main} implies
Theorem~\ref{th:graph} by adopting the analogous step from~\cite[Section~2.2]{PikhurkoRazborov17}.

\begin{proof}[Proof of Theorem~\ref{th:graph}] 
Suppose for the sake of contradiction that Theorem~\ref{th:graph} is false,
which is witnessed by some $r\ge 4$ and $\e>0$. Thus  we can find a sequence $(G_n)_{n\in\I N}$ of graphs of increasing orders $v_n:=v(G_n)$ such that
$t(K_r,G_n)= g_r(t(K_2,G_n))+o(1)$ and each $G_n$ is $\e v_n^2$-far in edit distance from~$\C H_{r,v_n}$.  By using the compactness of $\C W$ and passing to a subsequence, we can additionally assume that
the sequence $(G_n)_{n\in\I N}$ is convergent to some
graphon~$W$. Let $\alpha:=t(K_2,W)$.
Clearly, $t(K_r,W)=g_r(\alpha)$. By Theorem~\ref{thm:  main},
$[W]\in \C H_r$. By Lemma~\ref{lm:ConvHr} pick $H_n\in \C H_{r,v_n}$ such that 
the sequence $(H_n)_{n\in\I N}$ converges to~$W$. 

For two graphs $G$ and $H$ of the same order $n$, define the \emph{cut distance} $\hat \delta_\Box(G,H)$ to be the minimum over all bijections $\phi:V(H)\to V(G)$ of $\hat d(G,\phi(H))$, where for graphs $G$ and $F$ with $V(G)=V(F)$ we define
\begin{equation}\label{eq:cut}
\hat d(G,F):=\max_{S,T\subseteq V(G)} \frac{|\,e_G(S,T)-e_F(S,T)\,|}{v(G)^2},
\end{equation} 
 with $e_G(S,T):=|\{ (x,y)\in S\times T\mid \{x,y\}\in E(G)\}|$.
 Informally speaking, $\hat d(G,F)$ is small if the two graphs on the same vertex set have similar edge distributions over all vertex cuts, while $\hat \delta_\Box$ is the version of $\hat d$
 where we look only at the isomorphism types of the graphs.

Theorems~2.3 and~2.7 in Borgs et al~\cite{BCLSV08} give that $\hat\delta_\Box(G_n,H_n)\to 0$. 
 Namely, \cite[Theorem~2.7]{BCLSV08} states that if two graphs have similar subgraph 
densities, then they are close in the \emph{fractional cut-distance} $\delta_\Box$
(which is defined the same way as $\hat{\delta}_\Box$ except, informally speaking,
 $\phi$ distributes each vertex of $H$ fractionally among $V(G)$), while \cite[Theorem~2.3]{BCLSV08} provides an upper bound of $\hat\delta_\Box$ in terms of $\delta_\Box$.

Up to relabelling of each $H_n$, assume that $\hat d(G_n,H_n)\to 0$. 
Take any $n\in\I N$ and let $v:=v_n$. Fix the partition $V(H_n)=V_1\cup\dots \cup V_{k-1}\cup U$ that was used to define
$H_n\in \C H_{r,v}$. For $i\in [k-1]$, if  we use $S=V_i$ and $T=V_i$ (resp.\ $T=V(H_n)\setminus V_i$) in~\eqref{eq:cut}, then we conclude that $V_i$ spans $o(v^2)$ edges (resp.\ $V_i$ is almost complete to the rest). Thus, by changing $o(v^2)$ adjacencies in $G_n$, we
can assume that the graphs
$G_n$ and $H_n$ coincide except for the subgraphs induced by~$U$.
Suppose that $|U|=\Omega(v)$ for otherwise
we get a contradiction to Lemma~\ref{lem: at boundary}. We have
$$
|e(G_n[U])-e(H_n[U])|= |e(G_n)-e(H_n)|=o(v^2).
$$
Of course, when we modify $o(v^2)$ adjacencies in $G_n$, then the $K_r$-density changes by $o(1)$. Also, each
edge of $G_n[U]$ (and of $H_n[U]$)
is in the same number of $r$-cliques whose remaining $r-2$ vertices are in $V(G_n)\setminus U$. Since $H_n[U]$ is
triangle-free and $G_n$ is asymptotically extremal, we
conclude that $G_n[U]$ spans $o(v^3)$ triangles. We can change $o(v^2)$ adjacencies and make $G_n[U]$ to be
triangle-free by the Triangle Removal
Lemma and have the ``correct'' number of edges by~\cite[Lemma~2.2]{PikhurkoRazborov17}. The obtained graph (which is $o(v^2)$-close in edit distance to $G_n$) is in $\C H_{r,v}$, contradicting our assumption.\end{proof}

\hide{
\begin{lemma}\label{lem: linear extension}
For any $\alpha \in [0,1]$ and $k', r\in \mathbb{N}$ with $k' > k(\alpha)$, we have 
$$h_r(\alpha) = p_{r,k(\alpha)}(\alpha) \geq p_{r,k'}(\alpha).$$
\end{lemma}
\begin{proof}
Standard calculation shows that $p_{r,k'}(x)$ is a concave function. 
For each $x\leq 1-\frac{1}{k'+1}$, let $$L_{r,k'}(x):=p_{r,k'}\left(1-\frac{1}{k'+1}\right) +  p'_{r,k'}\left(1-\frac{1}{k'+1}\right)\left(x-\left(1-\frac{1}{k'+1}\right)\right).$$ 
In other words, $y= L_{r,k'}(x)$ is the line tangent to the curve $y= p_{r,k'}(x)$ at $x= 1-\frac{1}{k'+1}$, having the same slope with the curve $y=h_r(\alpha)$ as $\alpha$ approaches to $1- \frac{1}{k'+1}$ from its right.
As $p_{r,k'}(x)$ is a concave function, we have 
$p_{r,k'}(x) \leq L_{r,k'}(x)$ for all $x\leq  1-\frac{1}{k'+1}$.

As $p_{r,k'}(x) = h_r(x)$ for $x \in \mathbf{I}_{k'}$,
Lemma~\ref{eq: h'} implies that 
$$ p'_{r,k'}\left(1-\frac{1}{k'+1}\right) = \lim_{z\to (1-\frac{1}{k'+1})^+ } h'_{r}(z) = \binom{r}{2} \left(c\left(1-\frac{1}{k'+1}\right)\right)^{r-2} k'^{(r-2)}.$$
However, for $x< 1-\frac{1}{k'+1}$, we have $c(x)<c\left(1-\frac{1}{k'+1}\right)$ as $c$ is an DECREASING function, and $k(x)<k'$. Thus,
$$h'_r(x) = \binom{r}{2} (c(x))^{r-2} k(x)^{(r-2)} <  \binom{r}{2} \left(c\left(1-\frac{1}{k'+1}\right)\right)^{r-2} k'^{(r-2)} = p'_{r,k'}\left(1- \frac{1}{k'+1}\right).$$
Hence, for all $x\leq 1-\frac{1}{k'+1}$. we have
$h_r(x) \geq L_{r,k'}(x) \geq p_{r,k'}(x)$ as desired.
\end{proof}
}

\section{Proof of the main result}\label{sec-main-proof}
	
Suppose that Theorem~\ref{thm: main} is not true. Let $r\geq 3$ be the minimum integer such that
there exists a $K_r$-extremal graphon $W= (W,\mu)$ which does not belong to~$\cH_r$. 
Let 
$$
\alpha:= t(K_2,W), \enspace \enspace k:=k(\alpha), \enspace \enspace\text{and} \enspace \enspace c:=c(\alpha) \enspace \text{ as in }\eqref{def: k}.
$$
As $[W]\notin \mathcal{H}_r$, we have $0< \alpha <1$ and Theorem~\ref{thm: K3 stability} implies that $r\geq 4$. 
We may further assume the following properties.
\begin{enumerate}[label={\rm (W\arabic*)}]
\item \label{W2} $t(K_r, W)= h_r(\alpha)>0$ and $t(K_3,W) > h_3(\alpha)$.
\item \label{W3} $\alpha \in \mathbf{I}_k  \setminus\{1- \frac{1}{k} \}$ and $c\in(\frac{1}{k+1}, \frac{1}{k})$.
\end{enumerate}
Indeed, we may assume that $W$ is not $K_3$-extremal as otherwise $W\in \mathcal{H}_3 \subseteq \mathcal{H}_r$ by Theorem~\ref{thm: K3 stability}. This, together with Theorem~\ref{thm: clique density}, implies \ref{W2}.
As $[W_{K_{t}}]  \in \cH_r$ for all $t \in \mathbb{N}$, Lemma~\ref{lem: at boundary} implies \ref{W3}.

Our strategy is as follows. Using \ref{W2}, we will find a point $x\in [0,1]$ such that $t_x(K_r,W)$ is small while $t_x(K_3,W)$ is large. (Recall that these are the densities of respectively $K_r$ and $K_3$ rooted at~$x$.) Note that \eqref{eq: nhd relation} translates these two values together with $d_W(x)$ 
into $t(K_{r-1},N_W(x))$ and $t(K_2, N_W(x))$. Hence, this will eventually enable us to translate the assumption $[W]\notin \mathcal{H}_r$ into some conclusion about $N_W(x)$, which will violate Theorem~\ref{thm: clique density} for~$r-1$.

In order to work in $N_W(x)$, we need to relate $d_W(x)$ and $t_x(K_r,W)$.
For this purpose, we will make use of the following auxiliary functions. For integer $t\ge 3$ and real $x\in [0,1]$, define
\begin{align}\label{eq: def q f}
\begin{split}
q_t(x)&:=
 	(t-1)(d_W(x)-(k-1)c)(k-1)^{(t-2)} c^{t-2}+ (k-1)^{(t-1)}c^{t-1}, \enspace ~~~\text{and }\\
f_t(x)&:= q_t(x) - t_x(K_t,W).
\end{split}
 	\end{align}
 	By
Tonelli's theorem (Theorem~\ref{th:Tonelli}), $d_W(x)$ and $t_x(K_t,W)$ are (everywhere defined) Borel functions of $x\in [0,1]$, so $q_t$ and $f_t$ are also Borel.
Later, in Claim~\ref{cl:1}, we will show that $f_r(x)=0$ for almost all $x$, which provides the desired relation between $d_W(x)$ and $t_x(K_r,W)$.

We first prove the following lemma, which partly motivates the definition of the function~$f_t$. 
\begin{lemma}\label{lem: integral f}
For each integer $t\ge 3$, we have
$$\int_{[0,1]} f_t(x) \dd\mu(x) = h_t( \alpha) - t(K_t, W).$$
\end{lemma}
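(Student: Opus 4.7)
The plan is to reduce the claim to an algebraic identity between $h_t(\alpha)$ and the natural integrals one gets by expanding $f_t$, then verify that identity using the defining relation $\alpha=\kappa_{2,k}(c)$.

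First I would split the integral as $\int_{[0,1]} f_t(x)\dd\mu(x) = \int_{[0,1]} q_t(x)\dd\mu(x) - \int_{[0,1]} t_x(K_t,W)\dd\mu(x)$. The second term equals $t(K_t,W)$ by the rooted-density identity from Section~\ref{graphons} (obtained from Tonelli's theorem, Theorem~\ref{th:Tonelli}). For the first term, $q_t(x)$ is affine in $d_W(x)$, and by~\eqref{eq: degree tK2} we have $\int_{[0,1]} d_W(x)\dd\mu(x) = \alpha$, so
\begin{equation*}
\int_{[0,1]} q_t(x)\dd\mu(x) \;=\; (t-1)(\alpha-(k-1)c)(k-1)^{(t-2)}c^{t-2} + (k-1)^{(t-1)}c^{t-1}.
\end{equation*}

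Thus it remains to show the right-hand side above equals $h_t(\alpha)$. Using $h_t(\alpha)=\kappa_{t,k}(c)$ from~\eqref{eq:hr} together with the compact formula~\eqref{eq:kappa}, this becomes the identity
\begin{equation*}
(t-1)(\alpha-(k-1)c)(k-1)^{(t-2)}c^{t-2} + (k-1)^{(t-1)}c^{t-1} \;=\; k^{(t-1)}c^{t-1}\bigl(t-(t-1)(k+1)c\bigr).
\end{equation*}
The key input is $\alpha = \kappa_{2,k}(c) = kc(2-(k+1)c)$ (which holds by definition of $c=c(\alpha)$ since $\alpha\in\bI_k$), from which I would deduce the clean substitution $\alpha-(k-1)c = (k+1)c(1-kc)$. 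Plugging this into the left-hand side, dividing out $c^{t-1}$, and using $k^{(t-1)}=k\cdot(k-1)^{(t-2)}$ and $(k-1)^{(t-1)}=(k-t+1)(k-1)^{(t-2)}$, both sides expand to the same expression in $c$; a short check of the constant and $c$-linear coefficients confirms equality (the constant term boils down to $(t-1)(k+1)+(k-t+1)=kt$).

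This is entirely a linearity-of-integration argument combined with a one-line algebraic identity, so I do not anticipate a real obstacle. The only point that requires a little care is the invocation of Tonelli's theorem to guarantee $\int t_x(K_t,W)\dd\mu(x)=t(K_t,W)$ and the Borel measurability of $d_W$ and $t_\cdot(K_t,W)$ (so that the integrals are defined); both are recorded in Section~\ref{graphons}.
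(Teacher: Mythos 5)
Your proof is correct and follows essentially the same route as the paper: expand $f_t$ by linearity, use Tonelli's theorem for $\int t_x(K_t,W)\dd\mu = t(K_t,W)$ and $\int d_W\dd\mu = \alpha$, substitute $\alpha=\kappa_{2,k}(c)=kc(2-(k+1)c)$, and verify the resulting polynomial identity equals $h_t(\alpha)=\kappa_{t,k}(c)$. The paper states the final algebraic simplification without detail ("one can see that the right hand side simplifies to $h_t(\alpha)$"), while you carry it out explicitly via the factorization $\alpha-(k-1)c=(k+1)c(1-kc)$, which is a nice clean way to organize the check.
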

\begin{proof}
By definition, we have
\begin{eqnarray*}
\int_{[0,1]} f_t(x)\dd\mu(x)
&=&(t-1)\left(\int_{[0,1]} d_W(x) \dd\mu(x)  - (k-1)c\right) (k-1)^{(t-2)}c^{t-2}  \nonumber \\
 & & + (k-1)^{(t-1)} c^{t-1} - \int_{[0,1]} t_x(K_t, W) \dd\mu(x) \nonumber \\
&\stackrel{\eqref{eq: degree tK2},\eqref{def: k}}{=}& (t-1) \big( kc(2-(k+1)c)  - (k-1)c \big) (k-1)^{(t-2)}c^{t-2}  \nonumber \\
 & & + (k-1)^{(t-1)} c^{t-1} - t(K_t,W).
\end{eqnarray*}
Recalling the definition of $h_t(\alpha)$ from \eqref{eq:hr}, one can see that the right hand side above simplifies to $h_t(\alpha) - t(K_t,W)$, as desired.
\end{proof}

We shall try to locate the desired point $x\in [0,1]$ as outlined above in the following subsections.

\subsection{Almost all points are ``$K_r$-typical''}
We further introduce the following two sets. Let
\begin{eqnarray*}
M_0&:=& \{ x\in [0,1]: f_r(x) \neq 0\},
 \end{eqnarray*} 
 that is, $M_0$ is the set of ``$K_r$-atypical'' points. Let
 \begin{eqnarray*}
 N_{0}&:=&\{ x\in [0,1]: f_3(x) < 0 \},
 \end{eqnarray*} 
 that is, $N_0$ is the set of ``$K_3$-heavy'' points.
Note that both sets are  Borel as $f_3$ and $f_r$ are  Borel functions.
We first show that $M_0$ is of negligible measure.
\begin{claim}\label{cl:1} 
We have $\mu(M_0) =0$.
	\end{claim}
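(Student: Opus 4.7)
My plan is to apply a first-variation (Lagrange multiplier) argument at the $K_r$-extremal graphon $W$ to force $f_r=0$ almost everywhere. First, by Lemma~\ref{lem: integral f} combined with the extremality $t(K_r,W)=h_r(\alpha)$ (Theorem~\ref{thm: clique density} and \ref{W2}), I obtain $\int_{[0,1]} f_r\dd\mu = 0$. Assume for contradiction that $\mu(M_0)>0$; then both $M_+:=\{f_r>0\}$ and $M_-:=\{f_r<0\}$ have positive measure. Using the continuity of measure (Lemma~\ref{lm:CM}) on the nested sequences $\{f_r \ge 1/n\}$ and $\{f_r \le -1/n\}$, I pick $\eta>0$ for which the sets $\{f_r\ge \eta\}$ and $\{f_r\le -\eta\}$ both have positive measure, and then invoke Theorem~\ref{thm: nonatomic} to select disjoint Borel subsets $A_+\subseteq\{f_r\ge \eta\}$ and $A_-\subseteq\{f_r\le -\eta\}$ with $\mu(A_+)=\mu(A_-)=\epsilon$, for a small $\epsilon>0$ to be fixed later.

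Set $\nu:=\mu|_{A_+}-\mu|_{A_-}$ and, for $t\in[0,1]$, define $\mu_t:=\mu+t\nu$. The Radon-Nikodym derivative of $\mu_t$ with respect to $\mu$ is $1+t(\mathbf{1}_{A_+}-\mathbf{1}_{A_-})\in[0,2]$, so $\mu_t$ is a non-atomic Borel probability measure and $(W,\mu_t)$ is a valid graphon. Expanding $\alpha(t) := t(K_2,W,\mu_t)$ and $g(t) := t(K_r,W,\mu_t)$ as polynomials in $t$ and using $|\nu|([0,1])=2\epsilon$ to bound the coefficients of $t^k$ for $k\ge 2$ by $O(\epsilon^2)$ uniformly in $t\in[0,1]$, I get
$$\alpha(t) = \alpha + 2t\int d_W\dd\nu + O(\epsilon^2),\qquad g(t) = t(K_r,W) + rt\int_{[0,1]} t_x(K_r,W)\dd\nu(x) + O(\epsilon^2).$$

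The crucial algebraic identity is that the coefficient of $d_W(x)$ in the definition of $q_r(x)$ (see~\eqref{eq: def q f}) equals $\frac{2h_r'(\alpha)}{r}$, because Lemma~\ref{eq: h'} gives $h_r'(\alpha)=\binom{r}{2}(k-1)^{(r-2)}c^{r-2}$ (valid since $\alpha\in\mathrm{int}(\mathbf{I}_k)$ by \ref{W3}). Substituting $t_x(K_r,W)=q_r(x)-f_r(x)$ into the linear term above yields
$$g(t) = h_r(\alpha) + h_r'(\alpha)(\alpha(t)-\alpha) - rt\int f_r\dd\nu + O(\epsilon^2).$$
By construction $\int f_r\dd\nu \ge 2\eta\epsilon$, and for small enough $t\epsilon$ Lemma~\ref{lem: Taylor} applies to give $h_r(\alpha(t))=h_r(\alpha)+h_r'(\alpha)(\alpha(t)-\alpha)+O((t\epsilon)^{3/2})$. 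Subtracting,
$$g(t)-h_r(\alpha(t)) \le -2rt\eta\epsilon + O(\epsilon^2) + O((t\epsilon)^{3/2}).$$
Setting $t=1$ and choosing $\epsilon$ small enough (in terms of $\eta$) makes this strictly negative, so $t(K_r,W,\mu_1)<h_r(\alpha(1))$, contradicting Theorem~\ref{thm: clique density} applied to the graphon $(W,\mu_1)$. Hence $\mu(M_0)=0$.

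The main obstacles will be (i) verifying that $(W,\mu_t)$ is a bona fide Borel, non-atomic graphon so that Reiher's theorem applies to it, (ii) controlling the higher-order error terms in the polynomial expansions of $\alpha(t)$ and $g(t)$ uniformly in $t\in[0,1]$, and (iii) recognising the tangent-matching identity that the coefficient of $d_W(x)$ in $q_r(x)$ is exactly $\frac{2h_r'(\alpha)}{r}$, which is precisely what makes $\int f_r\dd\nu$ encode the deviation of $g(t)$ from the tangent line of $h_r$ at~$\alpha$. The hypothesis \ref{W3} that $\alpha$ lies in the \emph{interior} of $\mathbf{I}_k$ is essential, both for the differentiability of $h_r$ at $\alpha$ and for the applicability of Lemma~\ref{lem: Taylor} along the small perturbation.
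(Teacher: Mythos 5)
Your proof is correct and follows essentially the same first-variation argument as the paper's: you perturb the measure by a small signed adjustment concentrated on sets where $f_r$ is bounded away from $0$ in each sign, and invoke Lemma~\ref{lem: Taylor} together with Theorem~\ref{thm: clique density} for the contradiction (correctly observing that the coefficient of $d_W(x)$ in $q_r$ matches $\tfrac{2}{r}h_r'(\alpha)$, so $\int f_r\dd\nu$ measures exactly the deviation of $t(K_r,\cdot)$ from the tangent line of $h_r$). The only cosmetic difference is the parametrization: you take sets of measure $\epsilon$ and a full-strength perturbation ($t=1$, so the density on $A_-$ becomes $0$), whereas the paper fixes a small density-perturbation magnitude $\epsilon$ acting on even smaller sets of measure $\eta/2$ with $\eta\ll\epsilon$; both trade-offs make the linear gain ($\gtrsim\eta\epsilon$) dominate the quadratic and Taylor-remainder errors.
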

\begin{proof}
 The statement that $f_r=0$ a.e.\ follows with some calculations from Razborov's differential calculus~\cite[Corollary~4.6]{Razborov07}.
Informally speaking, the quantity $f_r(x)$  measures the ``contribution" of $x$ to  $h_r(t(K_2,W))-t(K_r,W)$. The terms of $f_r$ that are linear in $d_W(x)$ and $t_x(K_r,W)$ give the gradient when we increase or decrease the density of $\mu$ at $x$ (while the constant term is chosen to make the average of $f_r$ zero).
Here $\alpha=t(K_2,W)$ is in the interior of $\bI_k$, where $h_r$ is differentiable. Since we cannot push $h_r(t(K_2,W))-t(K_r,W)=0$ into positive values by Theorem~\ref{thm: clique density}, it follows that $f_r(x)=0$ for almost every~$x\in[0,1]$.
 
For the reader's convenience, we present a direct proof of the claim. For each $\gamma \geq 0$, let 
$$U_{1,\gamma} := \{x\in [0,1]: f_r(x) >\gamma\} \enspace\text{and}\enspace U_{2,\gamma}:= \{x\in [0,1]: f_r(x)< -\gamma \}.$$
 Note that $U_{1,\gamma}, U_{2,\gamma}$ are both Borel sets for all $\gamma \geq 0$ as the function $f_r$ is Borel. Suppose that $\mu(M_0)= \mu(U_{1,0} \cup U_{2,0})>0$.

By \ref{W2} and Lemma~\ref{lem: integral f}, we have
$\int_{[0,1]} f_r(x) \dd\mu(x)=0$, implying that both $\mu(U_{1,0})$ and $\mu(U_{2,0})$ are positive. Indeed, if say $\mu(U_{1,0})=0$ while $\mu(U_{2,0})>0$, then $\int_{[0,1]} f_r(x)\dd\mu(x)<0$, a contradiction. For $i\in [2]$, as $\bigcup_{\gamma>0} U_{i,\gamma} = U_{i,0}$ has positive measure and $\{ U_{i,\gamma}\}_{\gamma >0}$ forms a nested collection of sets, there exists $\gamma >0$ such that $\mu(U_{i,\gamma})>\gamma$ by Lemma~\ref{lm:CM} (applied to a countable sequence of $\gamma\to0$).

In brief, we derive a contradiction to the minimality of $W$ by replacing a small subset of vertices with negative $f_r$ by those with with positive $f_r$, and showing that this strictly decreases $t(K_r,W)-h_r(t(K_2,W))$. Formally, we choose $\eta, \epsilon$ such that 
$$0< \eta \ll \epsilon  \ll \gamma, 1/k, 1/r, c\leq 1.$$
By Sierpinski's theorem (Theorem~\ref{thm: nonatomic}) and since the measure $\mu$ is non-atomic by
the definition of a graphon,
there exist sets  $U_1 \subseteq U_{1,\gamma}$ and $U_2\subseteq U_{2,\gamma}$ such that
$\mu(U_1) = \mu(U_2) = \eta/2$.  Let $U:= U_1\cup U_2$.
Consider the density function
$$
u(z):=\left\{\begin{array}{ll} 1,& z\in [0,1]\setminus U,\\
1+\epsilon ,& z\in U_1,\\
1-\epsilon ,& z\in U_2.\end{array}\right.
$$
 Then $\int_{[0,1]} u(z)\dd\mu(z)=1$, so $\dd\mu'(z):=u(z)\dd\mu(z)$ is also a Borel probability measure. Let $W'=(W,\mu')$ be the graphon with the same function $W$ but with the new probability measure $\mu'$.
 Recall the definitions in \eqref{eq: B def B}.
As $\mu(U) = \eta \ll 1/r$, the Union Bound gives that 
\begin{eqnarray}\label{eq: Omega 1 measure 1} 
\mu^r( U^{r,1} ) &\leq& r\eta,
\end{eqnarray}
and
\begin{eqnarray}\label{eq: Omega 2 measure} 
\mu^r( U^{r,2}_+ ) &\leq& \sum_{\ell \geq 2} \binom{r}{\ell} \eta^\ell \leq r^2 \eta^2.
\end{eqnarray}
Note that $U^{2,0} = ([0,1] \setminus U)\times([0,1]\setminus U)$.
For each $j\in [2]$, let 
$$V_j:= \left(U_j\times ([0,1]\setminus U)\right) \cup  \left(([0,1]\setminus U) \times U_j\right).$$
Then $U^{2,1} = V_1\cup V_2$.
As $U^{2,0}$ does not contribute to $t(K_2,W')-t(K_2,W)$, we obtain
\begin{eqnarray}\label{eq: K2 diff}
 t(K_2,W') - t(K_2,W) &=& 
\sum_{\ell\in [2]} \int_{U^{2,\ell}} \left( W'(x_1,x_2) \prod_{i\in [2]} \dd\mu'(x_{i}) -   W(x_1,x_2) \prod_{i\in[2]} \dd\mu(x_{i})\right)\nonumber\\
&\stackrel{\eqref{eq: Omega 2 measure} }{=}&  \left(\int_{V_1} -\int_{V_2} \right) \epsilon W(x_1,x_2)   \prod_{i\in [2]} \dd\mu(x_i)
\pm r^2 \eta^2 \nonumber \\
&=& 2\left(\int_{U_1\times [0,1]} -\int_{U_2\times [0,1]} \right) \epsilon  W(x_1,x_2)  \prod_{i\in [2]} \dd\mu(x_i) 
 \pm 2 \mu^2(U^2) \pm  r^2 \eta^2  \nonumber  \\
 &=& 2 \epsilon \left( \int_{U_1} - \int_{U_2} \right)  d_W(x) \dd \mu(x) \pm  2r^2 \eta^2, 
\end{eqnarray}
where the final equality follows from Tonelli's theorem (Theorem~\ref{th:Tonelli}). In particular,
as $\mu(U_1)=\mu(U_2)=\eta/2$ and $d_W(x)\leq 1$ for all $x\in [0,1]$, \eqref{eq: K2 diff} with the fact $\eta \ll \epsilon$ implies
\begin{eqnarray}\label{eq: K2 diff 22}
|t(K_2,W') - t(K_2,W) | \leq 2\epsilon \mu(U) + 2r^2 \eta^2 \leq
 3\epsilon \eta.
\end{eqnarray}

We now consider the increment in $K_r$-density.
Again, $U^{r,0}$ does not contribute to $t(K_r,W')-t(K_r,W)$. Hence
\begin{eqnarray}\label{eq: tKr tKrW}
& & \hspace{-1.7cm} t(K_r,W') - t(K_r,W) \nonumber \\
&=&  \int_{\mathbf{x} \in U^{r,1} } \prod_{ij\in \binom{[r]}{2}}  W(x_i,x_j) \Big(\prod_{i\in [r]} \dd \mu'(x_{i}) -  \prod_{i\in [r]}\dd \mu(x_{i})\Big)  \pm \mu^r ( U^{r,2}_+ )\nonumber
\\
&\stackrel{\eqref{eq: Omega 2 measure} }{=}&  r  \int_{x_1\in U} \int_{ U^{r-1,0} }  \prod_{ij \in \binom{[r]}{2}}  W(x_i,x_j) \Big(\prod_{i\in [r]} \dd \mu'(x_{i}) -\prod_{i\in [r]} \dd \mu(x_{i}) \Big)
\pm r^2  \eta^2 \nonumber \\
&=&   r \epsilon \left(\int_{x_1\in U_1 } - \int_{x_1\in U_2}\right)  \int_{ ([0,1]\setminus U)^{r-1} }  \prod_{ij\in \binom{[r]}{2}}   W(x_i,x_j) \prod_{i\in [r]} \dd \mu(x_{i})  
\pm  r^2\eta^2 \nonumber \\
&=&
r\epsilon \left(\int_{x_1\in U_1 } - \int_{x_1\in U_2}\right)   (t_{x_1}(K_r,W) \pm \mu^{r-1}(U^{r-1,1}_+) ) \dd \mu(x_1) \pm  r^2 \eta^2,
\end{eqnarray}
where the second equality holds by symmetry between the variables~$x_i$. By the definitions of $U_1$ and $U_2$, and by~\eqref{eq: def q f}--\eqref{eq: Omega 2 measure},
we further have
\begin{eqnarray*}
\eqref{eq: tKr tKrW} &\leq& r\epsilon\left(\int_{U_1}(q_r(x)-\gamma) - \int_{U_2} (q_r(x)+\gamma) \right)\dd \mu(x)  \pm  2r^2 \eta^2 \nonumber \\
 &\stackrel{\text{def of }q_r}{=}& r\epsilon   (r-1) (k-1)^{(r-2)} c^{r-2} \left(\int_{U_1} -\int_{U_2}\right)  d_W(x) \dd \mu(x) - r\gamma \epsilon \mu(U)  \pm 2r^2 \eta^2\nonumber \\
 &\leq & \epsilon r(r-1) (k-1)^{(r-2)} c^{r-2} \left(\int_{U_1} -\int_{U_2}\right)  d_W(x) \dd \mu(x) - r\gamma \epsilon \eta/2.
\end{eqnarray*}
Let $\alpha':= t(K_2,W')$.
Then, the above inequality, together with \eqref{eq: K2 diff}, Lemma~\ref{eq: h'} and~ \ref{W3}, implies that
\begin{eqnarray*}
t(K_r,W') &\leq& t(K_r,W) + \binom{r}{2} (k-1)^{(r-2)}c^{r-2} (\alpha'-\alpha \pm 2r^2 \eta^2) -r\gamma\epsilon\eta/2  \\
&<&  h_r(\alpha) +h'_r(\alpha) (\alpha' - \alpha) - r\gamma \epsilon \eta/3.
\end{eqnarray*}
On the other hand, by Lemma~\ref{lem: Taylor} and \eqref{eq: K2 diff 22}, we see that
$$h_r(\alpha') \geq h_r(\alpha) + h'_r(\alpha) (\alpha'-\alpha) - |\alpha'-\alpha|^{3/2}
> h_r(\alpha) +h'_r(\alpha) (\alpha' - \alpha) - \eta^{3/2}.$$
Hence, as $\eta \ll \epsilon$, we have 
$$t(K_r,W') <  h_r(\alpha) +h'_r(\alpha) (\alpha' - \alpha) - r\gamma \epsilon \eta/3 
< h_r(\alpha) +h'_r(\alpha) (\alpha' - \alpha) - \eta^{3/2} < h_r(\alpha').$$
This contradicts Theorem~\ref{thm: clique density}, proving the claim.
\end{proof}
We next show that the set $N_0$, which consists of ``$K_3$-heavy'' points, has positive measure.
\begin{claim}\label{cl: N0}
We have $\mu(N_{0} ) > 0$.
\end{claim}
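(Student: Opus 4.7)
The plan is to deduce $\mu(N_0) > 0$ directly from hypothesis \ref{W2} together with the integral identity in Lemma~\ref{lem: integral f}. The point is that $N_0$ is defined as the set where $f_3$ is strictly negative, and we have explicit information about the sign of the integral of $f_3$.

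First, I would apply Lemma~\ref{lem: integral f} with $t=3$ to obtain
\[
\int_{[0,1]} f_3(x)\dd\mu(x) \;=\; h_3(\alpha) - t(K_3,W).
\]
By property \ref{W2}, we have $t(K_3,W) > h_3(\alpha)$, so the right-hand side is strictly negative, i.e.\ $\int_{[0,1]} f_3\dd\mu < 0$.

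Next, I would observe that if instead $\mu(N_0) = 0$, then $f_3(x) \ge 0$ for $\mu$-almost every $x\in[0,1]$, which would force $\int_{[0,1]} f_3\dd\mu \ge 0$, contradicting the strict inequality above. Hence $\mu(N_0) > 0$, as desired.

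There is no real obstacle here: the claim is essentially a one-line contrapositive deduction from the already-established identity of Lemma~\ref{lem: integral f} and the assumption that $W$ is not $K_3$-extremal (recorded in \ref{W2}). The only thing to note is that $N_0$ is Borel, which is already mentioned in the paragraph preceding the claim (it follows because $f_3$ is a Borel function, as explained right after its definition in~\eqref{eq: def q f}).
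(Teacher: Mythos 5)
Your proof is correct and uses the same essential ingredients as the paper's (Lemma~\ref{lem: integral f} with $t=3$ plus \ref{W2} to get $\int f_3\,\dd\mu<0$); you simply make the final step qualitative rather than quantitative. The paper instead bounds $f_3$ from below by $-k^2$ and derives an explicit lower bound $\mu(N_\beta)\ge\beta/(k^2-\beta)$ for $\beta=\tfrac12(t(K_3,W)-h_3(\alpha))$, but since that quantitative estimate is not used later, your shorter contrapositive argument (using that $f_3$ is bounded, hence integrable, and that a.e.\ non-negativity forces a non-negative integral) is equally valid and cleaner.
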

\begin{proof}
For each $\gamma>0$, let $N_{\gamma}:=\{ x\in [0,1]: f_3(x) < -\gamma\}$. 
Let
$$\beta:= \frac{1}{2}(t(K_3,W)-h_3(\alpha))\stackrel{\ref{W2}}{>}0.$$
By Lemma~\ref{lem: integral f}, we have 
$\int_{[0,1]} f_3(x) \dd\mu(x) = h_3(\alpha) - t(K_3,W)=- 2\beta.$
On the other hand, we have  that, rather roughly, $f_3(x)\geq q_3(x)-1\ge -k^2$ for all $x\in [0,1]$. Thus 
we have
$$- 2\beta =\int_{[0,1]} f_3(x) \dd\mu(x) \geq - k^2\mu(N_{\beta}) - \beta (1-\mu(N_{\beta})),$$
implying that $\mu(N_{\beta}) \geq \beta/(k^2-\beta) >0$. Consequently, as $N_{\beta} \subseteq N_0$, we see that $\mu(N_0)\geq \mu(N_{\beta})>0$ as claimed.
\end{proof}

\subsection{Maximum degree condition}
We shall show in this subsection that almost every $x\in [0,1]$ satisfies $d_W(x)\leq kc$. For this, let
$$D:=\{ x\in [0,1] : d_W(x) > kc\}$$
be the set of points with ``too large degree''. We shall see that $D$ has measure zero. To show this we need one more statement about pairs that are ``$K_r$-heavy''.
Let $K_r^-$ denote the graph obtained from the complete graph on $[r]$ rooted at $1$ and $2$ by removing the edge $\{1,2\}$. Define
\begin{align}\label{eq: heavy B}
B_*:=\big\{ (x,y) \in [0,1]^2 \mid W(x,y)>0 \enspace \text{and} \enspace t_{x,y}(K_r^-,W)> (k-1)^{(r-2)} c^{r-2} \big\},
\end{align}
which one can think of as the set of edges that are ``$K_r$-heavy''.
As $W$ is Borel, the set $B_*$ is Borel. The following claim states that most of the pairs of ``adjacent'' points are not contained in too many copies of~$K_r$.
\begin{claim}\label{cl:2} 
We have $\mu^2(B_*) = 0$.
\end{claim}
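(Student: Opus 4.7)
The strategy mirrors Claim~\ref{cl:1}: if $\mu^2(B_*)>0$, I will perturb $W$ downward on a carefully chosen subset of $B_*$ and derive a contradiction with Theorem~\ref{thm: clique density}. Suppose for contradiction that $\mu^2(B_*)>0$. Writing $B_*$ as the nested union (over $\gamma,\gamma'>0$) of the symmetric Borel sets
$$B_{\gamma,\gamma'}:=\left\{(x,y)\in[0,1]^2 \,:\, W(x,y)\ge\gamma',\ t_{x,y}(K_r^-,W)\ge (k-1)^{(r-2)}c^{r-2}+\gamma\right\},$$
Lemma~\ref{lm:CM} yields fixed $\gamma,\gamma'>0$ with $\mu^2(B_{\gamma,\gamma'})>0$; note these sets are Borel since $(x,y)\mapsto t_{x,y}(K_r^-,W)$ is Borel by Tonelli's theorem. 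Fix a hierarchy $0<\eta\ll\epsilon\ll\gamma,\gamma',1/r,1/k,c$, and apply Lemma~\ref{lem: shrinking} to $B_{\gamma,\gamma'}$ to obtain a symmetric Borel subset $C\subseteq B_{\gamma,\gamma'}$ with $\mu^2(C)\ge \eta^2\mu^2(B_{\gamma,\gamma'})>0$ and, crucially, the local-sparsity bound $\mu^r(C^{r,2}_+)\le r^3\eta\,\mu^2(C)$ from~\ref{B'3}.

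Next, define the perturbed graphon $W'(x,y):=W(x,y)-\epsilon\mathbf{1}_C(x,y)$, which is a valid graphon since $W\ge\gamma'>\epsilon$ on $C$ and $C$ is symmetric. By linearity, $t(K_2,W')-t(K_2,W)=-\epsilon\mu^2(C)$. For $t(K_r,W')$, expand $\prod_{ij}(W-\epsilon\mathbf{1}_C)(x_i,x_j)$ as a sum over subsets $S\subseteq\binom{[r]}{2}$: the $S=\emptyset$ term gives $t(K_r,W)$; by symmetry between the $\binom{r}{2}$ pairs, the $|S|=1$ terms together contribute $-\binom{r}{2}\epsilon\int_C t_{x,y}(K_r^-,W)\,\dd\mu^2(x,y)$; and the terms with $|S|\ge 2$ have total absolute value bounded by $O(\epsilon^2\mu^r(C^{r,2}_+))=O(\epsilon^2\eta\,\mu^2(C))$ by~\ref{B'3}. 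Combining the lower bound $t_{x,y}(K_r^-,W)\ge(k-1)^{(r-2)}c^{r-2}+\gamma$ on $C$ with the identity $h_r'(\alpha)=\binom{r}{2}(k-1)^{(r-2)}c^{r-2}$ from Lemma~\ref{eq: h'} yields
$$t(K_r,W')\le h_r(\alpha)-h_r'(\alpha)\,\epsilon\mu^2(C)-\binom{r}{2}\epsilon\gamma\,\mu^2(C)+O(\epsilon^2\eta\,\mu^2(C)).$$

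To conclude, since $\alpha\in\mathbf{I}_k\setminus\{1-\tfrac{1}{k}\}$ by~\ref{W3}, Lemma~\ref{lem: Taylor} applied at $\alpha':=\alpha-\epsilon\mu^2(C)$ gives $h_r(\alpha')\ge h_r(\alpha)-h_r'(\alpha)\,\epsilon\mu^2(C)-(\epsilon\mu^2(C))^{3/2}$. Subtracting,
$$t(K_r,W')-h_r(t(K_2,W'))\le -\binom{r}{2}\epsilon\gamma\,\mu^2(C)+O(\epsilon^2\eta\,\mu^2(C))+(\epsilon\mu^2(C))^{3/2}<0,$$
where the strict inequality holds under the chosen hierarchy (since $\epsilon\eta\ll\gamma$ and $\epsilon^{1/2}\mu^2(C)^{1/2}\ll\gamma$), contradicting Theorem~\ref{thm: clique density} for $W'$. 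The main technical hurdle is controlling the multi-edge interaction error in the $K_r$-expansion: without the Shrinking Lemma's guarantee $\mu^r(C^{r,2}_+)\le r^3\eta\,\mu^2(C)$, the quadratic error would swallow the linear gain $\binom{r}{2}\epsilon\gamma\mu^2(C)$, which is why passing from $B_{\gamma,\gamma'}$ to the ``locally sparse'' subset $C$ is essential.
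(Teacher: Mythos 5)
Your proof is correct and follows essentially the same strategy as the paper's: extract a uniformly heavy symmetric Borel subset $C\subseteq B_*$ via Lemma~\ref{lem: shrinking}, perturb $W$ downward on $C$, expand the $K_r$-density change with the higher-order terms controlled by~\ref{B'3}, and contradict Theorem~\ref{thm: clique density} using the Taylor bound of Lemma~\ref{lem: Taylor}. The one mild deviation is your additive perturbation $W-\epsilon\mathbf 1_C$ in place of the paper's multiplicative $(1-\epsilon)W$ on $C$. This makes $\alpha'-\alpha=-\epsilon\mu^2(C)$ exact rather than sandwiched between $-\epsilon\mu^2(C)$ and $-\epsilon^2\mu^2(C)$, so your linear gain $\binom r2\epsilon\gamma\mu^2(C)$ dominates the Taylor error $(\epsilon\mu^2(C))^{3/2}$ for any $\mu^2(C)\le 1$ once $\epsilon\ll\gamma^2$; that is why you can legitimately skip the paper's intermediate Sierpi\'nski step, which cuts $C$ down to measure $\le\eta$ precisely so that its weaker gain $\epsilon^3\mu^2(C)$ beats the same Taylor error. (Just be sure, when invoking Lemma~\ref{lem: shrinking}, to pick $\eta$ with $\eta^{-1}\in\mathbb N$ as the lemma requires; this is trivial to arrange within the hierarchy.)
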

\begin{proof} This claim also follows from Razborov's differential calculus~\cite[Corollary~4.6]{Razborov07}.
In terms of  graphs, the argument roughly says that if, on the contrary, $\Omega(n^2)$ edges of an almost extremal $(n,m)$-graph $G$ are each in too many copies of $K_r$ (namely, in at least $H_r(n,m)-H_r(n,m-1)+\Omega(n^{r-2})$ copies), then by removing a carefully selected subset of such edges we can destroy so many $r$-cliques so that the asymptotic result (Theorem~\ref{thm: clique density}) is violated.

Again, let us give a direct proof of the claim. Suppose $\mu^2(B_*)>0$.
For each $\epsilon>0$, let
$$B_{\epsilon}:= \{ (x,y) \in [0,1]^2 \mid W(x,y)> \epsilon \enspace \text{and} \enspace t_{x,y}(K_r^-,W)\geq  (k-1)^{(r-2)} c^{r-2} +\epsilon\}.$$ 
Note that $B_{\epsilon}$ is a Borel set since the function $(x,y)\mapsto t_{x,y}(K_r^-,W)$ is Borel by Tonelli's theorem.
As $\{B_{\epsilon}\}_{\epsilon>0}$ forms a collection of nested sets and $\bigcup_{\epsilon>0} B_{\epsilon} = B_*$,  there is $\epsilon>0$ such that the $\mu^2(B_\epsilon) \geq \epsilon$. 
We fix such $\epsilon>0$. 
By 
lowering the value of $\epsilon$ if necessary, and choosing a constant $\eta$, we assume that 
$$0<\eta \ll \epsilon \ll \mu^2(B), \alpha, 1/r, 1/k, c.$$
By Sierpinski's Theorem (Theorem~\ref{thm: nonatomic}), 
take a subset $B\subseteq B_{\epsilon}$ with $\mu^2(B)=\eta$. By Lemma~\ref{lem: shrinking}, there exists a symmetric Borel set $C\subseteq B$ satisfying \ref{B'1}--\ref{B'3}.

Define 
$$W'(x,y) := \left\{ \begin{array}{ll} (1-\epsilon)W(x,y), & \text{ if } (x,y) \in C, \\
W(x,y), & \text{ if } (x,y)\in [0,1]^2\setminus C.
\end{array}\right.
$$
As $C$ and $W$ are Borel, the function $W'$ is also Borel. 
Let $\alpha':= t(K_2,W')$.
As $\eta \ll \alpha$, we have
\begin{align}\label{eq: edge change}
\alpha'-\alpha= t(K_2,W')- t(K_2,W)= -\epsilon \int_{C} W(x_1,x_2)\ \prod_{i\in [2]} \dd\mu(x_{i}).
\end{align}
Since $\epsilon< W(x,y)\leq 1$ for all $(x,y)\in C \subseteq B_{\epsilon}$, we also have that 
\begin{align}\label{eq: edge change 2}
-\epsilon\mu^2(C) \leq \alpha'-\alpha \leq -\epsilon^2 \mu^2(C).
\end{align}

For each $\mathbf{x}= (x_1,x_2)\in C$, \ref{B'2} implies that
\begin{eqnarray}\label{eq: T1}
& &\hspace{-1cm}  
\int_{ (x_3,\dots, x_r) \in C^{r,1}(\mathbf{x}) }\prod_{ij\in \binom{[r]}{2}\setminus \{\{1,2\}\} }  W(x_i,x_j) \prod_{i=3}^{r} \dd \mu(x_{i})  \nonumber \\
&=& \left(\int_{ [0,1]^{r-2}  } \pm \int_{C^{r,2}_+(\mathbf{x})}\right)
\prod_{ij\in \binom{[r]}{2} \setminus \{\{1,2\}\} }  W(x_i,x_j) \prod_{i=3}^{r} \dd \mu(x_{i})  \nonumber \\
 &\stackrel{\ref{B'2}}{=}& t_{x_1,x_2}(K_r^{-},W)\pm 2r\eta.
\end{eqnarray}

Thus, by the symmetry between the variables $x_i$ and Tonelli's theorem,
we have
\begin{eqnarray}\label{eq: eq11}
& &\hspace{-1.5cm}  \int_{\mathbf{x}\in C^{r,1} } \prod_{ij\in \binom{[r]}{2} }  W(x_i,x_j)\ \dd\mu^r(\mathbf{x}) \nonumber \\
&=&
\binom{r}{2} \int_{(x_1,x_2) \in C } W(x_1,x_2) \int_{ (x_3,\dots,x_r) \in C^{r,1}(x_1,x_2)  }\prod_{ij\in \binom{[r]}{2}\setminus \{\{1,2\}\} }  W(x_i,x_j)  \prod_{i\in [r]} \dd \mu(x_i)  \nonumber\\
&\stackrel{\eqref{eq: T1}}{=}& \binom{r}{2} \int_{ (x_1,x_2)  \in C }W(x_1,x_2) \left( t_{x_1,x_2}(K_r^{-},W)\pm 2r\eta
\right) \prod_{i\in [2]}\dd \mu(x_i)
  \nonumber \\
&=& \hspace{-0.3cm}  \binom{r}{2} \left(\int_{  (x_1,x_2) \in C }W(x_1,x_2)  t_{x_1,x_2}(K_r^{-},W) \prod_{i\in [2]} \dd \mu(x_i)
\pm 2r\eta \mu^2(C)\right).
\end{eqnarray} 

We shall bound $K_r$-density in $W'$ in two ways to derive a contradiction. First, note that for all $2\le \ell\le {r\choose 2}$ and $(x_1,\dots, x_r)\in C^{r,\ell}$, we have that
$$\prod_{ij \in \binom{[r]}{2}}  W'(x_i,x_j) = (1-\epsilon)^\ell \prod_{ij \in \binom{[r]}{2}}  W(x_i,x_j).$$
As $C^{r,0}$ does not contribute to the change in $K_r$-density, we have
 \begin{eqnarray}\label{eq: large compute}
 & & \hspace{-1cm} t(K_r,W')-t(K_r,W)  \nonumber \\
 &=& -\epsilon \int_{\mathbf{x}\in C^{r,1} } \prod_{ ij\in \binom{[r]}{2}}  W(x_i,x_j)\ \dd\mu^r(\mathbf{x}) \pm \sum_{2\le \ell\le{r\choose 2}} (1-(1-\epsilon)^\ell) \int_{\mathbf{x} \in C^{r,\ell}} \prod_{ij\in \binom{[r]}{2}}  W(x_i,x_j)\ \dd\mu^r(\mathbf{x})  \nonumber\\
 &\stackrel{\eqref{eq: eq11}}{=}& 
- \epsilon \binom{r}{2} \left( \int_{  (x_1,x_2)  \in C }W(x_1,x_2)  t_{x_1,x_2}(K_r^{-},W) \prod_{i\in [2]} \dd \mu(x_i) \pm 2 r \eta \mu^2(C)
\right) \pm r^4 \epsilon \mu^r( C^{r,2}_+)
   \nonumber\\
   &\stackrel{\ref{B'3}}{\leq}& 
   - \epsilon \binom{r}{2} \left( \int_{ (x_1,x_2)\in C }W(x_1,x_2)  ((k-1)^{(r-2)} c^{r-2}+\epsilon)  \prod_{i\in [2]} \dd \mu(x_i)
\right) \pm \eta  \mu^2(C) \nonumber\\
 & \stackrel{\eqref{eq: edge change}}{=}&  \binom{r}{2} ((k-1)^{(r-2)} c^{r-2}+\epsilon) (\alpha'-\alpha) \pm   \eta  \mu^2(C) \nonumber\\
 &\stackrel{\text{Lem}~\ref{eq: h'}}{=}& h'_r(\alpha) (\alpha'-\alpha) + \binom{r}{2}\epsilon (\alpha'-\alpha) \pm \eta \mu^2(C) \nonumber\\
&\stackrel{\eqref{eq: edge change 2}}{\leq}& h'_r(\alpha)(\alpha'-\alpha)
- \epsilon^3 \mu^2(C)/2.
 \end{eqnarray}
 On the other hand, as $\eta \ll \epsilon \ll 1/k, 1/c, 1/r, \alpha$, 
 by Theorem~\ref{thm: clique density}, Lemma~\ref{lem: Taylor} and \ref{W3}, we have
 \begin{eqnarray*}
 t(K_r,W') &\geq& h_r(\alpha') \geq h_r(\alpha) + h'(\alpha)(\alpha'-\alpha) -  |\alpha'-\alpha|^{3/2}\\
 &\stackrel{\eqref{eq: edge change 2}}{\geq}& t(K_r,W) +  h'_r(\alpha)(\alpha'-\alpha) - \epsilon^{3/2} (\mu^2(C))^{3/2}  \\
&\stackrel{\ref{B'1}}{>}&  t(K_r,W) +  h'_r(\alpha)(\alpha'-\alpha)
- \epsilon^3 \mu^2(C)/2,
  \end{eqnarray*}
a contradiction to \eqref{eq: large compute}. This proves the claim.\end{proof}

We can now show that $D$, the set of ``large degree'' points, is negligible, thus imposing an additional ``maximum degree'' condition on our graphon.

\begin{claim}\label{cl: degree}
We have $\mu(D) =0$.
\end{claim}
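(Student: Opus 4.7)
The plan is to combine the two preceding claims to force a pointwise upper bound on $d_W(x)$ for almost every $x$. Writing $t_x(K_r,W) = \int_{[0,1]} W(x,y)\, t_{x,y}(K_r^-,W)\, \dd\mu(y)$ via the ``remove the root edge'' identity, we can use $\mu^2(B_*)=0$ (Claim~\ref{cl:2}) together with Tonelli's theorem to conclude that for almost every $x$ the section $\{y:(x,y)\in B_*\}$ has $\mu$-measure zero. Thus for a.e.\ $x$,
$$
t_x(K_r,W) \;=\; \int_{y:(x,y)\notin B_*} W(x,y)\, t_{x,y}(K_r^-,W)\, \dd\mu(y) \;\le\; (k-1)^{(r-2)} c^{r-2}\, d_W(x),
$$
since on the complement of $B_*$, either $W(x,y)=0$ or $t_{x,y}(K_r^-,W)\le (k-1)^{(r-2)} c^{r-2}$.

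Next, by Claim~\ref{cl:1}, the function $f_r$ vanishes almost everywhere, so for a.e.\ $x$,
$$
t_x(K_r,W) \;=\; q_r(x) \;=\; (r-1)\bigl(d_W(x)-(k-1)c\bigr)(k-1)^{(r-2)} c^{r-2} + (k-1)^{(r-1)} c^{r-1}.
$$
Combining the equality and the inequality above at a common a.e.\ point $x$, and noting that $(k-1)^{(r-2)}c^{r-2}>0$ (by \ref{W2} we have $h_r(\alpha)>0$, which forces $k\ge r-1$, so $(k-1)^{(r-2)}\ge (r-2)!>0$, while $c>0$ by \ref{W3}), we may divide through by $(k-1)^{(r-2)}c^{r-2}$. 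Using the identity $(k-1)^{(r-1)}/(k-1)^{(r-2)}=k-r+1$, the resulting inequality reads
$$
(r-1)\bigl(d_W(x)-(k-1)c\bigr) + (k-r+1)c \;\le\; d_W(x).
$$
A short algebraic simplification collapses this to $(r-2)d_W(x)\le (r-2)kc$, and since $r\ge 4$ we conclude $d_W(x)\le kc$ for a.e.\ $x$, i.e.\ $\mu(D)=0$.

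There is no real obstacle here beyond bookkeeping: the two previous claims are arranged exactly so that Claim~\ref{cl:1} gives the precise linear relation between $d_W(x)$ and $t_x(K_r,W)$, while Claim~\ref{cl:2} gives the matching upper bound on $t_x(K_r,W)$ in terms of $d_W(x)$. The one place requiring mild care is the passage from $\mu^2(B_*)=0$ to the almost-everywhere statement about sections, which is a standard application of Tonelli, and the verification that $(k-1)^{(r-2)}c^{r-2}>0$, which is guaranteed by our assumptions~\ref{W2} and~\ref{W3}.
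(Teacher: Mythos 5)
Your proof is correct and takes essentially the same route as the paper's: apply Tonelli to $\mu^2(B_*)=0$ to get that almost every section of $B_*$ is null, use Claim~\ref{cl:2} to bound $t_x(K_r,W)$ by $(k-1)^{(r-2)}c^{r-2}\,d_W(x)$ for such $x$, combine with $f_r(x)=0$ from Claim~\ref{cl:1}, and rearrange to obtain $d_W(x)\le kc$ almost everywhere. The only cosmetic difference is that the paper makes the exceptional set explicit via an auxiliary family $B(\gamma)$ and shows $D\subseteq M_0\cup B(0)$, whereas you work directly with a.e.\ statements; the algebra and the logic are identical.
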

\begin{proof} In graph theory language, the argument is informally as follows. Claim~\ref{cl:2} bounds the number of $r$-cliques per typical edge of an almost extremal graph $G$. This, by double counting, bounds the number of $r$-cliques per typical vertex $x$ in terms of its degree. On the other hand, the last two parameters are linearly related by Claim~\ref{cl:1}. Putting all together, we derive the claim.

Let us provide details. Recall the definition of $B_{*}$ in \eqref{eq: heavy B}.
For each $\gamma \geq 0$, let
$$B(\gamma):= \{ x\in [0,1] : 
\mu\left( \big\{ y\in [0,1]: (x,y) \in B_* \big\}\right) > \gamma\}.$$
By Tonelli's Theorem
and Claim~\ref{cl:2}, we have
$$0= \mu^2(B_*) = \int_{x\in [0,1]} \int_{y: (x,y)\in B_*} \dd\mu(x)\dd\mu(y) \geq \int_{x\in B(\gamma)} \gamma ~ \dd \mu(x) \geq \gamma \mu(B(\gamma)).$$
Thus we have $\mu(B(\gamma))=0$ for all $\gamma>0$ and, by Lemma~\ref{lm:CM},
$\mu(B(0))= \mu(\cup_{\gamma>0} B(\gamma))=0$.
Hence, by Claim~\ref{cl:1} it suffices to prove $D\subseteq M_0\cup B(0)$.
By the definition of $M_0$, for each $x\notin M_0\cup B(0)$, we have $f_r(x)= q_r(x)-t_x(K_r,W)=0$. Tonelli's theorem
then implies that
\begin{eqnarray*}
q_r(x)&=& (r-1) (d_W(x)-(k-1)c) (k-1)^{(r-2)}c^{r-2} + (k-1)^{(r-1)}c^{r-1} \\
&= & t_x(K_r,W) = \int_{y\in \Omega} t_{x,y} (K^-_r,W)W(x,y) \dd\mu(y) \\
&=& \left(\int_{y : (x,y)\in B_*} + \int_{y: (x,y)\notin B_* } \right)t_{x,y} (K^-_r,W)W(x,y) \dd\mu(y) 
 \\
 &\stackrel{\eqref{eq: heavy B}}{ \leq} & \mu(\{y\in \Omega: (x,y)\in B_*\} ) + (k-1)^{(r-2)}c^{r-2} d_W(x)\\
 &=& (k-1)^{(r-2)}c^{r-2} d_W(x),
 \end{eqnarray*}
 where the final inequality follows from the assumption $x\notin B(0)$.
 Rearranging this, we obtain 
 $$d_W(x) \leq kc,$$
 showing that $x\notin D$.
 Hence, $D\subseteq M_0\cup B(0)$ as claimed.
\end{proof}

\subsection{Putting everything together}
We are now ready to derive the final contradiction.
By Claims~\ref{cl:1},~\ref{cl: N0}~and~\ref{cl: degree}, we have $\mu(N_{0} \setminus (D\cup M_0) ) >0$. 
Fix a point 
$$x\in N_{0} \setminus (D\cup M_0).$$

By Tonelli's Theorem, the function $W(x,\cdot)$ is Borel.
For brevity, set $d:= d_W(x)$.
Note that $f_r(x)= q _r(x)-t_x(K_r,W) =0$ as $x\notin M_0$.

Suppose first that $d=0$. Then we have 
\begin{align*}
t_x(K_r,W) &= q_r(x) =- (r-1)(k-1)\cdot (k-1)^{(r-2)}c^{r-1} + (k-1)^{(r-1)}c^{r-1}  \\
&= - (r-2)k (k-1)^{(r-2)}c^{r-1}<0,
\end{align*}
a contradiction. Thus, we may assume that $d>0$.

Let $\tau:= c/d$. 
As $x\notin D$, we have
\begin{align}\label{eq: tau}
\tau \geq\frac{1}{k}.
\end{align}

Consider $W':= N_W(x)$, the neighbourhood of $x$ in $W$ as in Definition~\ref{def: nbrhd}. As $x\in N_0$, we have $f_3(x)=q_3(x)-t_x(K_3,W)<0$. 
We then derive from~\eqref{eq: nhd relation} and~\ref{W2} that
\begin{align}\label{eq: nhd alpha'}
\alpha' &:= t(K_2,W') =\frac{ t_x(K_3,W)}{d^2}
> \frac{q_3(x)}{d^2}   
= \frac{ 2(d-(k-1)c)(k-1)c + (k-1)(k-2)c^2 }{d^2}  \nonumber \\
&= 2(k-1)\tau - k(k-1) \tau^2.
\end{align}
Further define
\begin{align}\label{eq:  rho upper}
\rho:= 2(k-1)\tau- k(k-1)\tau^2 = 1-\frac{1}{k} -k(k-1)\left(\tau- \frac{1}{k}\right)^2  \leq 1- \frac{1}{k}.
\end{align}

Let us briefly overview where we stand in the proof now. Here, $\alpha'$ is the edge density of $N_W(x)$. Also, $\rho$ is the edge density of the neighbourhood of a vertex of degree $d=d_W(x)$ in an $r$-extremal graphon of the overall edge density $\alpha$, provided that this degree $d$ is realisable. The relation $f_r(x)=0$ implies that the density of $K_{r-1}$ in $N_W(x)$ is as "expected", see~\eqref{eq: Kr-1W'} below. In order to derive a contradiction to $\rho<\alpha$
we also need to exclude the case that $\rho$ is in the interior of the region when $h_{r-1}$ is zero.

\begin{claim} $\rho\ge 1-\frac1{r-2}$.
\end{claim}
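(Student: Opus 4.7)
The plan is to exploit the fact that $t(K_{r-1}, W')$ is a nonnegative density to obtain an upper bound on $\tau$, and then translate this into the required lower bound on $\rho$. First, since $x \notin M_0$ yields $f_r(x) = 0$, we have $t_x(K_r, W) = q_r(x)$. Combined with~\eqref{eq: nhd relation}, this gives $t(K_{r-1}, W') = q_r(x)/d^{r-1}$. Substituting the definition of $q_r$ from~\eqref{eq: def q f} and the shorthand $\tau = c/d$, a routine algebraic simplification (factoring out $(k-1)^{(r-2)}$ and using the identity $(k-1)^{(r-1)} = (k-1)^{(r-2)}(k-r+1)$) rewrites the right-hand side as
$$
t(K_{r-1}, W') = (k-1)^{(r-2)} \tau^{r-2}\bigl[(r-1) - (r-2)k\tau\bigr] = \kappa_{r-1, k-1}(\tau),
$$
where $\kappa$ is the function from~\eqref{eq:kappa}.

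Next, I invoke~\ref{W2}, namely $t(K_r, W) = h_r(\alpha) > 0$, together with~\eqref{eq:hr} to conclude $k \geq r - 1$. This ensures $(k-1)^{(r-2)} > 0$, so that the nonnegativity of $t(K_{r-1}, W')$, combined with the explicit form of $\kappa_{r-1, k-1}$ just derived, forces
$$
\tau \leq \frac{r-1}{(r-2)k}.
$$

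Finally, I translate this into a bound on $\rho = \kappa_{2, k-1}(\tau) = (k-1)\tau(2 - k\tau)$. This quadratic is maximised at $\gamma = 1/k$ and strictly decreasing on $[1/k, \infty)$; since $\tau \in [1/k, (r-1)/((r-2)k)]$ by~\eqref{eq: tau} and the previous step, $\rho$ is minimised when $\tau$ equals the upper endpoint, giving
$$
\rho \geq \kappa_{2, k-1}\!\left(\frac{r-1}{(r-2)k}\right) = \frac{(k-1)(r-1)(r-3)}{k(r-2)^2}.
$$
A short verification shows the right-hand side is at least $(r-3)/(r-2) = 1 - 1/(r-2)$ if and only if $(k-1)(r-1) \geq k(r-2)$, i.e.\ precisely when $k \geq r-1$, which holds, completing the proof. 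The step requiring the most care is the algebraic identification in the first paragraph, which relies on cancellations between the falling factorials; once this is done, everything else is elementary.
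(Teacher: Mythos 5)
Your proof is correct and follows essentially the same route as the paper: both derive $\tau\le\frac{r-1}{(r-2)k}$ from the non-negativity of $q_r(x)=t_x(K_r,W)$ (equivalently, of $t(K_{r-1},W')$), identify $\rho=\kappa_{2,k-1}(\tau)$ as a concave quadratic, and reduce to an endpoint check; the only cosmetic differences are that you observe $\rho$ is monotone decreasing on $[1/k,\infty)$ and so check only the right endpoint (the paper invokes concavity and checks both), and that you correctly state the needed bound as $k\ge r-1$ where the paper's text says $k\ge r-2$ — a minor slip there, since $k\ge r-1$ is what $h_r(\alpha)>0$ actually gives and what the second endpoint computation requires.
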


\begin{proof} Since $f_r(x)=0$, we have that $q_r(x)=t_x(K_r,W)$ is non-negative. 
This implies via an easy calculation that $d=d(x)$ is at least $\frac{r-2}{r-1}\, ck$. In turn, this and~\eqref{eq: tau} give that $\tau=c/d$ lies between $\frac1k$ and $\frac{r-1}{(r-2)k}$. Since $\rho$ is a concave quadratic function of $\tau$, it is 
enough to verify that $\rho-(1-\frac1{r-2})$ is non-negative for these end-points. Routine calculations give respectively $\frac1{r-2}-\frac1k$ and $\frac{(k+1-r)(r-3)}{k(r-2)^2}$, both of which are non-negative as $k\ge r-2$ (by $h_r(\alpha)>0$).\end{proof}

Recall that $t_x(K_r,W)=q_r(x)$ and so
\eqref{eq: nhd relation} implies that
\begin{eqnarray}\label{eq: Kr-1W'}
t(K_{r-1},W') &=&\frac{ t_x(K_r,W)}{d^{r-1}} = \frac{ (r-1)(d-(k-1)c)(k-1)^{(r-2)}c^{r-2} + (k-1)^{(r-1)}c^{r-1}}{ d^{r-1}}\nonumber \\
&=& (r-1) (k-1)^{(r-2)} \tau^{r-2} - (r-2) k^{(r-1)} \tau^{r-1}. 
\end{eqnarray}
We can also deduce from the definition of $\rho$ that
$$
 (k-1)(k -1 - k \rho)   \stackrel{\eqref{eq:  rho upper}}{=}  \big((k-1) ( k\tau - 1)\big)^2.
$$
Here, the left-hand side is exactly the expression that appears under the square root when we define $p_{r-1,k-1}(\rho)$ in~\eqref{eq:prt}.  Thus we have by~\eqref{eq: tau} that
\begin{eqnarray*}
p_{r-1,k-1}(\rho)&= &\frac{(k-2)^{(r-3)}}{(k-1)^{r-2} k^{r-2} }  \left( k-1 + (k-1)(k\tau-1) \right)^{r-2} \left( k-1 - (r-2) (k-1) (k\tau -1 )\right)\\
&=&  (r-1)  (k-1)^{(r-2)} \tau^{r-2} - (r-2)  k^{(r-1)} \tau^{r-1} \\
&\stackrel{\eqref{eq: Kr-1W'}}{ =}& t(K_{r-1},W').
 \end{eqnarray*}

On the other hand, by \eqref{def: k} and \eqref{eq:  rho upper}, we have $k(\rho)\leq k-1$.
Recall that $\alpha'>\rho\ge 1-\frac1{r-2}$. Thus, by Lemma~\ref{lem: linear extension} and the fact that $h_{r-1}$ is a strictly increasing function
on $[1-\frac1{r-2},1]$, we have
$$h_{r-1}(\alpha') > h_{r-1}(\rho) = p_{r-1,k(\rho)}(\rho) \geq p_{r-1,k-1}(\rho) = t(K_{r-1},W').$$
Hence, we have $t(K_{r-1},W')< h_{r-1}(\alpha')$ while $t(K_2,W)=\alpha'$, a contradiction to Theorem~\ref{thm: clique density}. 

This completes the proof of Theorem~\ref{thm: main}.

\section*{Acknowledgements} 
The authors are grateful to the anonymous reviewer for carefully reading this paper.

\bibliographystyle{plain}



\begin{dajauthors}
\begin{authorinfo}[jaehoon]
  Jaehoon Kim\\
  Korea Advanced Institute of Sciences and Technology\\
  Daejeon, Republic of Korea\\
  jaehoon\imagedot{}kim\imageat{}kaist\imagedot{}ac\imagedot{}kr \\
  \url{https://sites.google.com/view/jaehoon-kim/}
\end{authorinfo}
\begin{authorinfo}[hong]
  Hong Liu\\
  Mathematics Institute and DIMAP\\
  University of Warwick\\
  Coventry, UK\\
  h\imagedot{}liu\imagedot{}9\imageat{}warwick\imagedot{}ac\imagedot{}uk \\
  \url{http://homepages.warwick.ac.uk/staff/H.Liu.9/}
\end{authorinfo}
\begin{authorinfo}[oleg]
  Oleg Pikhurko\\
  Mathematics Institute and DIMAP\\
  University of Warwick\\
  Coventry, UK\\
  o\imagedot{}pikhurko\imageat{}warwick\imagedot{}ac\imagedot{}uk\\
  \url{https://homepages.warwick.ac.uk/~maskat/}
\end{authorinfo}
\begin{authorinfo}[maryam]
  Maryam Sharifzadeh\\
  Department of Mathematics and Mathematical Statistics \\ Umeå University \\
Umeå, Sweden \\
maryam\imagedot{}sharifzadeh\imageat{}umu\imagedot{}se
\end{authorinfo}
\end{dajauthors}

\end{document}